\numberwithin{equation}{section}
\newtheorem{theorem}{Theorem}[section]
\newtheorem{definition}[theorem]{Definition}
\newtheorem{lemma}[theorem]{Lemma}
\newtheorem{remark}[theorem]{Remark}
\newtheorem{proposition}[theorem]{Proposition}
\newtheorem{example}[theorem]{Example}
\newcommand{\dist}{\operatorname{dist}}
\DeclarePairedDelimiter\ceil{\lceil}{\rceil}
\DeclarePairedDelimiter\floor{\lfloor}{\rfloor}
\begin{document}

\title{The Morse spectrum for linear random dynamical systems}
\author[R.~Al-Qaiwani]{Rayyan Al-Qaiwani}\address{Rayyan Al-Qaiwani and Martin Rasmussen, Department of Mathematics, Imperial College London, 180 Queen's Gate, London SW7 2AZ, United Kingdom}
\author[M.~Callaway]{Mark Callaway}
\author[M.~Rasmussen]{Martin Rasmussen}
\address{Mark Callaway, Centre for Environmental Mathematics, University of Exeter, Penryn, Cornwall, TR10 9FE, United Kingdom}

\date{\today}

\begin{abstract}

We prove that projectivised finite-dimensional linear random dynamical systems possess a unique finest weak Morse decomposition. Based on this result, we define the Morse spectrum and investigate its basic properties. In particular, we show that the Morse spectrum is given by a finite union of closed intervals. Furthermore we demonstrate that  under a bounded growth condition, the Morse spectrum coincides with the non-uniform dichotomy spectrum.

\end{abstract}

\maketitle

\section{Introduction}

In the context of linear random dynamical systems, spectral theory addresses both the characterisation of exponential growth behaviour and the decomposition into invariant subspaces to distinguish different growth. A major milestone was achieved in 1968 with Oseledets' Multiplicative Ergodic Theorem \cite{Oseledets1968}, which describes existence and properties of a spectrum of Lyapunov exponents. It states that, for a finite-dimensional linear random dynamical system  that satisfies a certain integrability condition and has ergodic base dynamics, almost surely, there exist finitely many Lyapunov exponents and a corresponding decomposition into invariant random linear spaces \cite[Chapter~3]{arnold1998random}. Recently, a different type of spectrum was developed for random dynamical systems: the dichotomy (or Sacker--Sell) spectrum \cite{Callaway_17_1}. This approach proved useful in particular to study bifurcations in random dynamical systems that cannot be captured by Lyapunov exponents. It originates from a series of papers on exponential dichotomies published by Sacker and Sell in the 1970's that culminated in \cite{SackerSell1978}, where the dichotomy spectrum was introduced for skew product flows with compact base.

Both the Lyapunov and the dichotomy spectrum lead to a spectral decomposition of the extended state space in the form of a Whitney sum. An alternative approach to study spectral properties of linear systems is to start with a natural decomposition in the form of a Whitney sum, and then to characterise the growth behaviour in the components of this decomposition.  This has first been studied for projectivised linear flows on vector bundles with a chain transitive base space. Selgrade's Theorem \cite{Selgrade_75_1,Salamon_88_1} establishes the existence of a finest Morse decomposition \cite{Conley_78_1} in this context. This enables one to associate to each component of the Morse decomposition a spectral interval of generalized
Lyapunov exponents, which yields the so-called Morse spectrum. This has been studied in the context of linear flows on vector bundles \cite{Colonius_96_1,Gruene_00_1} and nonautonomous dynamical systems \cite{Colonius_08_1,rasmussen2007alternative}.

In this paper, we develop the Morse spectrum for random dynamical systems. This requires as a first step to establish the existence of a unique finest Morse decomposition in the projective space for linear, finite-dimensional random dynamical systems. The focus in this paper is on weak Morse decompositions \cite{ochs1999weak,crauel2004towards}, and we note that Morse decompositions for pullback random attractors and repellers have been studied in \cite{Liu_08_1}. We demonstrate that the finest Morse decomposition corresponds to a Whitney sum of the extended state space. Building on this result, we define the Morse spectrum as the union of limit points of finite-time Lyapunov exponents of trajectories originating in the Morse sets of the unique finest Morse decomposition.
We then investigate  fundamental properties of the Morse spectrum, showing that it is almost surely constant and can be expressed as a finite union of closed intervals. Additionally, we analyse its behavior under random coordinate changes. Finally, we compare the Morse spectrum with the dichotomy spectrum for linear random dynamical systems. Specifically, we prove that under a bounded growth condition, the dichotomy spectrum coincides with the Morse spectrum, thus establishing the Morse spectrum as a constructive alternative to the dichotomy spectrum.

It is important to note that the Morse spectrum depends on the chosen notion of attractivity or repulsivity used to construct the unique finest Morse decomposition. In particular, it is influenced by whether attractors are permitted to approach their corresponding repellers arbitrarily closely. In the final section, we briefly address this dependency and introduce a uniform version of the Morse spectrum.

\section{Preliminaries}

In this section we introduce and review some preliminary notions that are needed in the main part of this paper. In particular, we provide a concise overview about the Morse decomposition theory for random dynamical systems and introduce the non-uniform dichotomy spectrum.

Let $(\Omega,\mathcal{F},\mu)$ be a complete probability space. A random dynamical system \cite{arnold1998random} on a Polish space $X$ is given by a pair of mappings $(\theta, \varphi)$: 
\begin{itemize}
    \item[(i)] a model of the randomness: an ergodic dynamical system $\theta:\mathbb{T}\times \Omega\to \Omega$, preserving the measure $\mu$. The dynamical system $\theta$ is called a \emph{metric dynamical system}.
    \item[(ii)] a model of the dynamics: a cocycle $\varphi: \mathbb{T}\times \Omega\times X\to X$, which is $(\mathcal{B}(\mathbb{T})\otimes \mathcal{F}\otimes \mathcal{B}(X),\mathcal{B}(X))$-measurable, and for $\omega\in \Omega$ and $x\in X$, one has:
    \begin{itemize}
        \item[$\bullet$] $\varphi(0,\omega)x=x$.
        \item[$\bullet$] for any $t,s\in \mathbb{T}$ one has $\varphi(t+s,\omega)=\varphi(t,\theta_{s}\omega)\circ \varphi(s,\omega)$.
        \item[$\bullet$] $(t,x)\mapsto \varphi(t,\omega)x$ is continuous.
    \end{itemize}
    Note that we write $\varphi(t,\omega,x)=\varphi(t,\omega)x$.
\end{itemize}

We will always assume that the time set $\mathbb{T}$ is either $\mathbb{T}=\mathbb{Z}$ or $\mathbb{T}=\mathbb{R}$.

We call a random dynamical system $(\theta,\varphi)$ with state space $\mathbb{R}^{d}$ linear, if for any $(t,\omega)\in \mathbb{T}\times \Omega$, the map $\varphi(t,\omega):\mathbb{R}^{d}\to \mathbb{R}^{d}$, $x\mapsto \varphi(t,\omega)x$ is linear. For such a linear random dynamical system $\varphi$ there exists a matrix-valued map $\Phi:\mathbb{T}\times \Omega \to \mathbb{R}^{d\times d}$ such that one has $\Phi(t,\omega)x=\varphi(t,\omega)x$ for any $(t,\omega,x)\in \mathbb{T}\times \Omega \times \mathbb{R}^{d}$. We identify $\varphi$ with $\Phi$. Unless stated otherwise, we equip $\mathbb{R}^{d}$ with the Euclidean norm $\|\cdot\|_{2}$.

We work on the real projective space $\mathbb{P}^{d-1}$. That is $\mathbb{P}^{d-1}=(\mathbb{R}^{d}\setminus \{0\})/\sim$ where for $(x,y)\in (\mathbb{R}^{d}\setminus \{0\})\times (\mathbb{R}^{d}\setminus \{0\})$ we define $x\sim y$ if and only if $x$ and $y$ are colinear. We denote the equivalence class of $x\in \mathbb{R}^{d}\setminus \{0\}$ by $\mathbb{P}x$. $\mathbb{P}^{d-1}$ is a compact metric space when equipped with the metric
$d_{\mathbb{P}}:\mathbb{P}^{d-1}\times \mathbb{P}^{d-1}\to [0,\sqrt{2}]$
\begin{align}
\label{metric on projected space}
d_{\mathbb{P}}(\hat{x},\hat{y})=\min \left\{\left|\frac{x}{\|x\|}-\frac{y}{\|y\|}\right|,\left|\frac{x}{\|x\|}+\frac{y}{\|y\|}\right|\right\}.
\end{align}
A linear two-sided random dynamical system $\Phi$  induces a projectivised linear random dynamical system $\mathbb{P}\Phi$ on $\mathbb{P}^{d-1}$ defined by 
\begin{align}
    \label{eq projected random dynamical system}
    \mathbb{P}\Phi(t,\omega)\mathbb{P}x:=\mathbb{P}(\Phi(t,\omega)x) \quad \text{ for all }(t,\omega,x)\in \mathbb{T}\times \Omega \times (\mathbb{R}^{d}\setminus \{0\}).
\end{align}
This set-up is discussed in a more general context in \cite[Section 6.2]{arnold1998random}.

Let $(Y,d)$ be a metric space. We define the Hausdorff semi-distance between two non-empty subsets $A,B\subset Y$ to be
\[
\dist : 2^Y \times 2^Y \to \mathbb{R}^+_0 \cup \{\infty\}, \ \dist(A,B) := \sup_{a \in A} \inf_{b \in B} d(a, b),
\]
We also set
\[
\dist(\emptyset, \emptyset) := 0, \quad \dist(\emptyset, A) := 0 \quad \text{and} \quad \dist(A, \emptyset) := \infty.
\]
Furthermore we define 
\[
\tilde{d}(A,B) := \inf_{a \in A, b \in B} d(a, b),
\]
with $\tilde{d}(A, \emptyset) = \tilde{d}(\emptyset, A) := 0$ and $\tilde{d}(\emptyset, \emptyset) := 0$.

In the context of random dynamical systems, one  often deals with $\omega$-indexed sets, so-called random sets \cite[Definition~14]{Cra2015RandAttr}. 
\begin{definition}[Random set]\label{def:random_set}
    Let $(\Omega, \mathcal{F}, \mu)$ be a complete probability space and $X$ a Polish space. A set $D\in \mathcal{B}(X)\otimes \mathcal{F}$ is called a random set. The $\omega$-fibers for some $\omega\in \Omega$ of a random set $D$ are defined by 
    $$D(\omega)=\{x\in X: (x,\omega)\in D\}.$$
    A random set $D$ is closed, open or compact if $D(\omega)$ is closed, open or compact respectively for any $\omega\in \Omega$.
\end{definition}
Next we require several notions of invariance for random sets $D$.
\begin{definition}[Invariance]
    Let $(\theta,\varphi)$ be a random dynamical system and consider a random set $D$.
    \begin{itemize}
        \item[(i)] $D$ is called \emph{forward invariant} if for all $\omega \in \Omega$ and $t \geq 0$
        \[
        \varphi(t, \omega) D(\omega) \subseteq D(\theta_t \omega).
        \]
   
        \item[(ii)] $D$ is called \emph{backward invariant} if for all $\omega \in \Omega$ and $t \leq 0$
        \[
        \varphi(t, \omega) D(\omega) \subseteq D(\theta_t \omega).
        \]
        
        \item[(iii)] $D$ is called \emph{invariant} if for all $\omega \in \Omega$ and $t \in \mathbb{T}$
        \[
        \varphi(t, \omega) D(\omega) = D(\theta_t \omega).
        \]
    \end{itemize}
\end{definition}
We  now present the main elements of Morse theory for weak attractors, following \cite{ochs1999weak,crauel2004towards}.

Generally, there are several reasonable notions of attractors for random dynamical systems. In this article, we focus on weak attractors \cite[Definition~4.1]{crauel2004towards}.

\begin{definition}[Local weak attractor]
An invariant compact random set $A$ is called a \emph{local weak attractor} if there exists a forward invariant open random set $U$ with $U(\omega) \supset A(\omega)$ $\mu$-a.s.~such that each closed random set $C \subset U$ is weakly attracted to $A$, i.e.,
\[
\lim_{t \to \infty} \mu \left\{ \omega : \dist(\varphi(t, \omega)C(\omega), A(\theta_t \omega)) > \varepsilon \right\} = 0 \quad \text{for every } \varepsilon > 0.
\]
In this case the neighborhood $U$ is said to be a weak attracting neighborhood of $A$. The set
\[
B(A)(\omega) := \left\{ x \in X : \varphi(t, \omega)x \in U(\theta_t \omega) \text{ for some } t \geq 0 \right\}
\]
is called the basin of attraction of $A$.
\end{definition}

A weak local repeller is defined via time-reversal \cite[Definition~4.1]{crauel2004towards}.
\begin{definition} [Local weak repeller]
An invariant compact random set $R$ is called a \emph{local weak repeller} if there exists a backward invariant open random set $U$ with $U(\omega) \supset R(\omega)$ $\mu$-a.s. such that each closed random set $C \subset U$ is weakly repelled by $R$, i.e.,
\[
\lim_{t \to -\infty} \mu \left\{ \omega : \dist(\varphi(t, \omega)C(\omega), R(\theta_t \omega)) > \varepsilon \right\} = 0 \quad \text{for every } \varepsilon > 0.
\]
The neighborhood $U$ is said to be a weak repelling neighborhood of $R$. The set
\[
B(R)(\omega) := \left\{ x \in X : \varphi(t, \omega)x \in U(\theta_t \omega) \text{ for some } t \leq 0 \right\}
\]
is called the basin of repulsion of $R$.
\end{definition}

In the following, we only present results concerning attractors, with analogous results holding for repellers. The first results, from \cite[Lemma 4.5, Corollary 5.2]{crauel2002random}, concern the basin of attraction.

\begin{lemma}[ ]
The basin of attraction of a local weak attractor $A$ with weak attracting neighborhood $U$ is an invariant open random set and $A$ weakly attracts all closed random sets $C$ such that $C(\omega) \subset B(A)(\omega)$ $\mu$-a.s., i.e.,
\[
\lim_{t \to \infty} \mu \left\{ \omega : \dist(\varphi(t, \omega)C(\omega), A(\theta_t \omega)) > \varepsilon \right\} = 0 \quad \text{for every } \varepsilon > 0.
\]
Moreover $B(A)$ is independent of $U$ up to a set of zero measure.
\end{lemma}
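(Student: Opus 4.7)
\emph{Plan.} The lemma has three parts: openness and invariance of $B(A)$, weak attraction of all closed random sets contained in the basin, and essential uniqueness of $B(A)$ as the attracting neighborhood $U$ varies. My plan is to dispatch the first by direct computation, reduce the second to the defining property of $A$ via a truncation argument, and derive the third from the second together with the observation that $A$ sits inside any attracting neighborhood.

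For invariance and openness, I would rewrite
\[
B(A)(\omega) \;=\; \bigcup_{t \geq 0}\,\varphi(t,\omega)^{-1}\bigl(U(\theta_t\omega)\bigr).
\]
Each fibre is then open as a union of preimages of open sets under continuous maps, and measurability of $B(A)$ as a random set follows from joint measurability of $\varphi$ and $U$ (taking a countable union over rational $t$ in the continuous-time case). Invariance follows from the cocycle identity: if $x \in B(A)(\omega)$ satisfies $\varphi(t,\omega)x \in U(\theta_t\omega)$ for some $t \geq 0$, then for $s \in \mathbb{T}$ either $t - s \geq 0$ (so $\varphi(s,\omega)x$ hits $U$ at time $t-s$) or $t < s$, in which case the forward invariance of $U$ places $\varphi(s,\omega)x$ already inside $U(\theta_s\omega)$. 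The reverse inclusion follows from invertibility of $\varphi(s,\omega)$ on the two-sided time set.

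The weak-attraction statement is the main content. The obstacle is that $C$ need only lie in the open basin, not in the attracting neighborhood $U$ itself. The idea is to exploit $V_N(\omega) := \varphi(N,\omega)^{-1}(U(\theta_N\omega)) \uparrow B(A)(\omega)$ (monotone because $U$ is forward invariant), together with compactness of the state space (which holds in the projective-space application). For each $\omega$ the compact set $C(\omega) \subseteq B(A)(\omega)$ then satisfies $C(\omega) \subseteq V_{N(\omega)}(\omega)$ for some a.s.\ finite, measurable $N(\omega)$. Given $\varepsilon, \delta > 0$, choose $N_0$ with $\mu\{N \leq N_0\} > 1 - \delta/2$ and define the closed random set
\[
\tilde C(\omega) \;:=\; \begin{cases} \varphi(N_0, \theta_{-N_0}\omega)\,C(\theta_{-N_0}\omega), & N(\theta_{-N_0}\omega) \leq N_0, \\ A(\omega), & \text{otherwise,} \end{cases}
\]
which lies in $U$. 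Applying the defining attractivity of $A$ to $\tilde C$ and transporting the conclusion back to $\varphi(N_0+t,\omega)C(\omega)$ via the cocycle identity and the $\theta_{N_0}$-invariance of $\mu$ yields the stated convergence in probability.

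For the essential uniqueness of $B(A)$, let $U_1, U_2$ be two weak attracting neighborhoods with basins $B_1, B_2$. The intersection $U_1 \cap U_2$ is open, contains $A$, is forward invariant as an intersection of forward-invariant sets, and its closed random subsets lie in $U_1$ and hence are weakly attracted; so $U_1 \cap U_2$ is itself a weak attracting neighborhood, with basin $B_{12} \subseteq B_1 \cap B_2$. Since $A(\omega) \subseteq U_2(\omega)$ for a.e.\ $\omega$ and $U_2$ is open, $\tilde d(A(\omega), U_2(\omega)^c) > 0$ a.s.; combining this with the weak attraction established in the previous step, applied to closed random subsets exhausting $B_1$, shows that orbits starting in $B_1$ enter $U_2$ with probability tending to $1$, so $B_1 \subseteq B_{12}$ up to a null set, with the symmetric statement for $B_2$. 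The hardest points will be the measurability bookkeeping for $N(\omega)$ and the case-defined $\tilde C$ in the second step, and the delicate passage from in-probability attraction to the pointwise inclusion of basins in the third.
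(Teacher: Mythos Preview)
The paper does not prove this lemma; it is quoted without proof from the literature (Lemma~4.5 and Corollary~5.2 of \cite{crauel2002random}), so there is no in-paper argument to compare against.

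Your outline is sound and would go through with some care. Two remarks. In the second step you lean on compactness of $X$ to obtain a finite $N(\omega)$ with $C(\omega)\subset V_{N(\omega)}(\omega)$; the lemma as stated is for a general Polish state space, so this is a restriction, though harmless in the paper since the only application is to $X=\mathbb{P}^{d-1}$. In the third step the passage from in-probability attraction to the a.s.\ inclusion $B_1\subset B_2$ needs one ingredient you only hint at: take a measurable selection $x(\omega)\in B_1(\omega)\setminus B_2(\omega)$ on the event where this is nonempty (this event is $\theta$-invariant, hence has measure $0$ or $1$ by ergodicity); since $\varphi(t,\omega)x(\omega)\notin U_2(\theta_t\omega)$ for all $t\ge 0$, one gets $\operatorname{dist}(\varphi(t,\omega)x(\omega),A(\theta_t\omega))\ge \tilde d(A(\theta_t\omega),U_2(\theta_t\omega)^c)$, and choosing a deterministic $\varepsilon$ with $\mu\{\tilde d(A,U_2^c)>\varepsilon\}>0$ (and using $\theta$-invariance of $\mu$) contradicts the weak attraction established in step two. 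Without ergodicity or an equivalent device the random lower bound $\tilde d(A(\theta_t\omega),U_2(\theta_t\omega)^c)$ does not immediately yield a contradiction to convergence in probability for a fixed $\varepsilon$.
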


The following result from \cite[Proposition~5.1]{crauel2002random} tells us that, given a local weak attractor $\emptyset \neq A \subsetneq X$, one can construct a corresponding local weak repeller $\emptyset \neq R \subsetneq X$.   

\begin{proposition}
Let $A$ be a local weak attractor. Then $R := X \setminus B(A)$ is a local weak repeller with basin of repulsion $B(R) = X \setminus A$.
\end{proposition}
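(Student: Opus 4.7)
The plan is to verify the three properties of $R := X\setminus B(A)$ in turn: compact invariance, the existence of a backward invariant open neighborhood witnessing weak repulsion, and the identification of the basin. The first and third are essentially formal; the real work is in the middle step, where I will convert a backward-time repulsion statement into a forward-time attraction statement via the cocycle identity and measure preservation.

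First, since the preceding lemma guarantees that $B(A)$ is an invariant open random set, $R$ is a closed random set, hence compact because $X=\mathbb{P}^{d-1}$ is compact. Bijectivity of $\varphi(t,\omega)=\mathbb{P}\Phi(t,\omega)$ propagates invariance from $B(A)$ to its complement. As the candidate neighborhood I take $V := X\setminus A$; this is open (since $A$ is compact), invariant (by invariance of $A$ combined with bijectivity of $\varphi$), and contains $R$ because $A(\omega)\subset B(A)(\omega)$ $\mu$-a.s.

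The main step is to show that every closed random set $C\subset V$ is weakly repelled by $R$. For fixed $\varepsilon>0$ I introduce the auxiliary closed random set
\[
K(\omega) := \setlr{y\in X : \dist(y,R(\omega))\geq\varepsilon},
\]
which by construction satisfies $K(\omega)\subset B(A)(\omega)$, so the preceding lemma tells us that $A$ weakly attracts $K$. Suppose, for contradiction, that repulsion of $C$ fails, so there exist $\delta>0$ and $t_n\to-\infty$ with
\[
\mu\setlr{\omega : \dist(\varphi(t_n,\omega)C(\omega),R(\theta_{t_n}\omega))>\varepsilon}\geq\delta.
\]
Setting $s_n := -t_n>0$, the change of variables $\omega = \theta_{s_n}\omega'$ preserves $\mu$ and gives $\theta_{t_n}\omega=\omega'$; combined with the cocycle identity $\varphi(-s_n,\theta_{s_n}\omega') = \varphi(s_n,\omega')^{-1}$, this rewrites the inequality as: on a set $\Omega_n'$ of measure $\geq\delta$ there exists $y(\omega')\in K(\omega')$ with $\varphi(s_n,\omega')y(\omega')\in C(\theta_{s_n}\omega')$. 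Because $C$ and $A$ are disjoint compact-valued random sets, $\eta(\omega):=\tilde d(C(\omega),A(\omega))>0$ a.s., so picking $\eta_0>0$ with $\mu\set{\eta>\eta_0}>1-\delta/2$ and using measure preservation of $\theta$, on a subset of $\Omega_n'$ of measure $\geq\delta/2$ I obtain $\dist(\varphi(s_n,\omega')K(\omega'),A(\theta_{s_n}\omega'))\geq\eta(\theta_{s_n}\omega')\geq\eta_0$, contradicting the weak attraction of $K$ by $A$ as $s_n\to\infty$.

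Finally, $B(R)=X\setminus A$ is direct from the basin definition with $V=X\setminus A$: invariance of $A$ implies that for $x\in A(\omega)$ the orbit $\varphi(t,\omega)x\in A(\theta_t\omega)$ never enters $V$, so $x\notin B(R)(\omega)$; conversely, any $x\notin A(\omega)$ satisfies $\varphi(0,\omega)x = x\in V(\omega)$, hence $x\in B(R)(\omega)$. The principal obstacle is the contradiction argument: the cocycle manipulation trading backward-time repulsion for forward-time attraction, together with the extraction of the uniform gap $\eta_0$, both hinge on compactness of $X$ and on $\eta>0$ almost surely.
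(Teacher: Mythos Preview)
The paper does not supply its own proof of this proposition; it is quoted directly from \cite[Proposition~5.1]{crauel2002random}, so there is no in-paper argument to compare against. Your proof is sound and is in fact close in spirit to the cited source: the key idea of converting failure of backward repulsion into failure of forward attraction of the auxiliary set $K(\omega)=\{y:\dist(y,R(\omega))\ge\varepsilon\}\subset B(A)(\omega)$ via the substitution $\omega=\theta_{s_n}\omega'$ and measure preservation is exactly the right mechanism.

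Two minor points worth tightening. First, you invoke compactness of $X=\mathbb P^{d-1}$ to obtain compactness of $R$; the proposition as stated in the preliminaries is for general Polish $X$, so strictly speaking you are proving only the case actually used in the paper (which is all that is needed here). Second, your claim that $\eta(\omega)=\tilde d(C(\omega),A(\omega))>0$ almost surely is not quite right under the paper's convention $\tilde d(\emptyset,A)=0$, since $C(\omega)$ may be empty on a set of positive measure. This is harmless, though: on the event $\Omega_n'$ you have already produced a point in $C(\theta_{s_n}\omega')$, so that fiber is nonempty and the gap argument goes through after choosing $\eta_0$ so that $\mu\{\omega:C(\omega)\neq\emptyset,\ \tilde d(C(\omega),A(\omega))\le\eta_0\}<\delta/2$.
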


This leads to the notion of a weak attractor-repeller pair. 
\begin{definition}[Attractor-repeller pair]
Let $A$ be a local weak attractor, then the local weak repeller $R = X \setminus B(A)$ is called the repeller corresponding to $A$, and $A$ the attractor corresponding to $R$, and $(A,R)$ is called a \emph{weak attractor-repeller pair}.
\end{definition}

The following proposition \cite[Theorem~5.1]{crauel2004towards} is fundamental in order to obtain Morse decompositions constructed from weak attractor-repeller pairs. It states that given a nested finite sequence of attractors, the sequence of corresponding repellers is nested too.

\begin{proposition}
 Suppose that $A_1$ and $A_2$ are local weak attractors such that $A_1(\omega) \subsetneq A_2(\omega)$ $\mu$-a.s., and with corresponding repellers $R_1$ and $R_2$, then $R_1(\omega) \supsetneq R_2(\omega)$ $\mu$-a.s.
\end{proposition}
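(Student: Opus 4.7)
The plan is to prove the inclusion $R_2 \subseteq R_1$ $\mu$-a.s.\ and strictness separately. The central technical observation driving both parts is the following: if $C$ is an invariant closed random set that is weakly attracted (respectively repelled) by an invariant closed random set $K$, then invariance of $C$ together with $\theta$-invariance of $\mu$ forces $C \subseteq K$ $\mu$-a.s. Indeed, the probabilistic attraction statement evaluated at $\varphi(t,\omega)C(\omega)=C(\theta_t\omega)$ yields $\mu\{\omega : \dist(C(\theta_t\omega), K(\theta_t\omega)) > \varepsilon\} \to 0$, and by $\theta$-invariance of $\mu$ this probability equals $\mu\{\omega : \dist(C(\omega), K(\omega)) > \varepsilon\}$, which is therefore null for every $\varepsilon > 0$; closedness of $K$ then gives the almost sure inclusion.

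For the inclusion $R_2 \subseteq R_1$ $\mu$-a.s., first I would observe that $A_2(\omega) \cap R_2(\omega) = \emptyset$ (since $A_2 \subseteq B(A_2) = X \setminus R_2$), so $A_1 \subseteq A_2$ forces $A_1(\omega) \cap R_2(\omega) = \emptyset$ $\mu$-a.s. This gives $R_2 \subseteq X \setminus A_1 = B(R_1)$ $\mu$-a.s., so by the repeller analogue of the preceding lemma (whose availability is asserted at the start of the section), $R_2$ is weakly repelled by $R_1$. Applying the observation above with $C = R_2$ and $K = R_1$ yields $R_2(\omega) \subseteq R_1(\omega)$ $\mu$-a.s.

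For strictness, I would argue by contradiction: suppose $R_1(\omega) = R_2(\omega)$ $\mu$-a.s., equivalently $B(A_1) = B(A_2)$ $\mu$-a.s. Since $A_2 \subseteq B(A_2)$, the invariant closed random set $A_2$ is then contained in $B(A_1)$ $\mu$-a.s., so by the closed-set attraction lemma $A_2$ is weakly attracted to $A_1$. Applying the same invariance and measure observation to $C = A_2$, $K = A_1$ gives $A_2(\omega) \subseteq A_1(\omega)$ $\mu$-a.s., contradicting the hypothesis $A_1(\omega) \subsetneq A_2(\omega)$ $\mu$-a.s.

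The main obstacle, though routine once isolated, is precisely the collapse from a probabilistic attraction statement to an almost sure set-theoretic inclusion. This collapse hinges on using invariance of the target random sets, together with $\theta$-invariance of $\mu$, to make the convergent quantity $t$-independent; without invariance of both factors in $\dist(C(\theta_t\omega), K(\theta_t\omega))$ the reduction fails. Beyond this, one needs to verify carefully that the repeller versions of the basin lemma and of weak attraction for closed random sets inside the basin are legitimately available, since the first step relies on them symmetrically to the attractor versions stated in the excerpt.
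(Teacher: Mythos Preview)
The paper does not give its own proof of this proposition; it is quoted as \cite[Theorem~5.1]{crauel2004towards} and used as a known ingredient. So there is no in-paper argument to compare against.

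Your argument is correct and is essentially the standard one. The key step you isolate---that an invariant closed random set $C$ which is weakly attracted (or repelled) by $K$ must satisfy $C\subseteq K$ $\mu$-a.s.---is exactly right: invariance of $C$ turns $\varphi(t,\omega)C(\omega)$ into $C(\theta_t\omega)$, and $\theta$-invariance of $\mu$ makes the quantity $\mu\{\omega:\dist(C(\omega),K(\omega))>\varepsilon\}$ independent of $t$, forcing it to vanish. Both applications (to $R_2\subseteq R_1$ via the repeller basin, and to $A_2\subseteq A_1$ for strictness) go through as you describe. The repeller versions of the basin lemma you invoke are legitimate here; the paper explicitly states that the attractor results carry over to repellers by time reversal.
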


The notion of a weak Morse decomposition is given as follows 
\begin{definition}[Weak Morse decomposition]
Suppose that $(A_i, R_i)$, $i \in \{0, \dots, n\}$, are weak attractor-repeller pairs that almost surely satisfy
\[
\emptyset = A_0(\omega) \subsetneq A_1(\omega) \subsetneq \dots \subsetneq A_n(\omega) = X.
\]
Then the set $\mathcal{M} := \{M_1, \dots, M_n\}$, defined by
\[
M_i := A_i \cap R_{i-1}, \quad i \in \{1, \dots, n\}
\]
is called a \emph{weak Morse decomposition} of $X$, and the sets $M_i$ are called \emph{Morse sets}.
\end{definition}

We are interested in \emph{finest} weak Morse decompositions. A Morse decomposition $\mathcal{M} = \{M_1, \dots, M_n\}$ is said to be finer than the Morse decomposition $\tilde{\mathcal{M}} = \{\tilde{M}_1, \dots, \tilde{M}_m\}$ if for each $i \in \{1, \dots, m\}$, there exists $j \in \{1, \dots, n\}$ such that $M_j(\omega) \subset \tilde{M}_i(\omega)$ $\mu$-a.s. A minimal element of this partial ordering is called a finest Morse decomposition \cite[Definition 2.3.12]{Call2014}.

We now introduce a non-uniform version of the  dichotomy spectrum for random dynamical systems, which is a variation from \cite{Callaway_17_1}. Broadly, a linear random dynamical system is said to admit an exponential dichotomy if $\mathbb{R}^{d}$ can be split up into two random complementary invariant sub-spaces $U(\omega)$ and $V(\omega)$ of initial values such that the forward orbits of $\Phi$ with initial values in $U(\omega)$ (respectively $V(\omega)$) are exponentially bounded in forwards time (respectively backwards time) by distinct exponential growth rates. In \cite{Callaway_17_1}, it was (implicitly) demanded that the subspaces $U(\omega)$ and $V(\omega)$ are essentially uniformly bounded away from each other in the projective space. In our setting, however, we allow the angle between $U(\omega)$ and $V(\omega)$ to come arbitrarily close to zero under variation of $\omega$ in a controlled manner. The role of $U$ and $V$ is formalised by so called \emph{invariant projectors}:
\begin{definition}[Invariant projector]
    A measurable map $P:\Omega \to \mathbb{R}^{d\times d}$ is said to be an invariant projector for a linear random dynamical system $\Phi$ if the following two conditions are fulfilled:
    \begin{enumerate}
        \item $P(\omega)^{2}=P(\omega)$ for all $\omega\in \Omega$.
        \item   $ P(\theta_{t}\omega)\Phi(t,\omega)=\Phi(t,\omega)P(\omega) \text{ for all } t\in \mathbb{T}  \text{ and for all }  \omega\in \Omega.$
    \end{enumerate}

\end{definition}

The range $\mathcal{R}(P)$ and null-space $\mathcal{N}(P)$ of an invariant projector $P$ are invariant, linear random sets, such that the dimensions are almost surely constant \cite[Proposition~2.1]{Callaway_17_1}.

The following definition of an exponential dichotomy is slightly more general than the definition given in \cite{Callaway_17_1} and makes use of tempered random variables. A positive random variable $x$ is called \emph{tempered} if $\lim_{t\to\pm\infty}\frac{1}{t}\ln x(\theta_{t}\omega)=0$ almost surely (see \cite[Section 4.1.1]{arnold1998random}).
\begin{definition}[Non-uniform exponential dichotomy]
    \label{def exponential dichotomy}
    A linear random dynamical system $\Phi$ is said to admit a \emph{non-uniform exponential dichotomy} with growth-rate $\gamma\in \mathbb{R}$ if there exists an invariant projector $P_{\gamma}$ for $\Phi$, a constant $\alpha>0$ and a tempered random variable $K:\Omega \to [1,\infty)$ so that almost surely 
    \begin{align}
    \label{def exp dichotomy pos time}
    \|\Phi(t,\omega)P_{\gamma}(\omega)\|\leq K(\omega)e^{(\gamma-\alpha)t} \text{ for any }t\geq0
    \end{align}
    
    and 
    \begin{align}
    \label{def exp dichotomy neg time}
    \|\Phi(t,\omega)(\mathbbm{1}-P_{\gamma})(\omega)\|\leq K(\omega)e^{(\gamma+\alpha)t} \text{ for any }t\leq0.
    \end{align}
\end{definition}

If the random variable $K$ in the definition above is essentially bounded,  then the range of the invariant projector is essentially uniformly bounded away from the null-space of the invariant projector in the projective space \cite[Lemma 3.28]{alqaiwani2024spectra}.

\begin{definition}[Non-uniform dichotomy spectrum]
  The \emph{non-uniform dichotomy spectrum} is defined by  
  $$\Sigma^{\prime} =\{\gamma\in\overline{\mathbb{R}}:\Phi \text{ does not admit an  exponential dichotomy with growth rate }\gamma\}.$$
\end{definition}

The following result \cite{alqaiwani2024spectra} says that invariant projectors give rise to weak attractor-repeller pairs.
\begin{theorem}
\label{structure invariant manifolds}
    Let $\Phi$ be a linear random dynamical system that admits an exponential dichotomy with growth-rate $\gamma\in\overline{\mathbb{R}}$ and invariant projector $P_{\gamma}$. Then $(A,R):=(\mathbb{P}\mathcal{N}(P_{\gamma}),\mathbb{P}\mathcal{R}(P_{\gamma}))$ defines a weak attractor-repeller pair.
\end{theorem}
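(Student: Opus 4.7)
The plan is to establish $A := \mathbb{P}\mathcal{N}(P_\gamma)$ as a local weak attractor of the projectivised cocycle $\mathbb{P}\Phi$, then invoke the earlier proposition on corresponding repellers to obtain the accompanying local weak repeller as $\mathbb{P}^{d-1}\setminus B(A)$, and finally identify this complement almost surely with $R := \mathbb{P}\mathcal{R}(P_\gamma)$. As preliminaries, $\mathcal{N}(P_\gamma)$ and $\mathcal{R}(P_\gamma)$ are invariant linear random subspaces of almost surely constant dimension, so $A$ and $R$ are invariant compact random subsets of $\mathbb{P}^{d-1}$. The natural candidate for an attracting neighbourhood is
\[
U(\omega) := \mathbb{P}^{d-1} \setminus R(\omega),
\]
which is open, a random set (by measurability of $P_\gamma$), contains $A(\omega)$ almost surely, and is forward invariant because $\mathcal{R}(P_\gamma)$ is invariant under $\Phi$.

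Next I would derive the key quantitative estimate. For $\mathbb{P}x \in U(\omega)$, decompose $x = x_R + x_N$ with $x_R := P_\gamma(\omega)x$ and $x_N := (\mathbbm{1}-P_\gamma(\omega))x \neq 0$. The bound (\ref{def exp dichotomy pos time}) yields $\|\Phi(t,\omega)x_R\| \leq K(\omega)e^{(\gamma-\alpha)t}\|x\|$ for $t\geq 0$, while applying (\ref{def exp dichotomy neg time}) at the fibre $\theta_t\omega$ and inverting the restriction of $\Phi(t,\omega)$ to $\mathcal{N}(P_\gamma(\omega))$ produces the lower bound $\|\Phi(t,\omega)x_N\| \geq K(\theta_t\omega)^{-1} e^{(\gamma+\alpha)t}\|x_N\|$. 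Since $\Phi(t,\omega)x_N \in \mathcal{N}(P_\gamma(\theta_t\omega))$, the elementary projective estimate $d_{\mathbb{P}}(\mathbb{P}(y_R+y_N), \mathbb{P}y_N) \leq 4\|y_R\|/\|y_N\|$ (valid whenever $\|y_R\| \leq \|y_N\|/2$) combines with the two dichotomy bounds to give
\[
d_{\mathbb{P}}(\mathbb{P}\Phi(t,\omega)x, A(\theta_t\omega)) \leq 4\, K(\omega)\, K(\theta_t\omega)\, e^{-2\alpha t}\, \frac{\|x\|}{\|x_N\|}
\]
once $t$ is large enough that the ratio is small.

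For a closed random set $C \subset U$, each fibre $C(\omega)$ is a compact subset of the open set $U(\omega)$, so
\[
\beta(\omega) := \sup\{\|x\|/\|(\mathbbm{1}-P_\gamma(\omega))x\| : \mathbb{P}x \in C(\omega),\ \|x\|=1\}
\]
is almost surely finite. The temperedness of $K$ gives $K(\theta_t\omega)e^{-2\alpha t} \to 0$ almost surely as $t\to\infty$, so the previous estimate yields $\dist(\mathbb{P}\Phi(t,\omega)C(\omega), A(\theta_t\omega)) \to 0$ almost surely; pointwise almost-sure convergence to $0$ implies convergence in probability, establishing that $A$ is a local weak attractor with $B(A) \supseteq U$. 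The earlier proposition on attractor-repeller pairs then produces $\mathbb{P}^{d-1}\setminus B(A)$ as the corresponding weak repeller; invariance of $\mathcal{R}(P_\gamma)$ forces every $\mathbb{P}x \in R(\omega)$ to satisfy $\mathbb{P}\Phi(t,\omega)x \in R(\theta_t\omega)$ for all $t$, so such an $\mathbb{P}x$ never enters $U$. Hence $R \cap B(A) = \emptyset$ almost surely, which combined with $B(A) \supseteq U = \mathbb{P}^{d-1} \setminus R$ gives $R = \mathbb{P}^{d-1}\setminus B(A)$ and completes the argument.

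The main obstacle is the measurability bookkeeping, in particular verifying that $\beta$ is a genuine random variable on $\Omega$ (using the measurable structure of $C$ together with measurability of $P_\gamma$), and that the projective-distance estimate is invariant under the choice of representative in each equivalence class. Once temperedness of $K$ is used to absorb the $\omega$-dependent prefactor into an exponential of rate strictly below $2\alpha$, the remaining work is the decomposition into $\mathcal{R}$- and $\mathcal{N}$-components and the two dichotomy bounds.
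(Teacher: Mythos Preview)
The paper does not actually prove this theorem; it is quoted from \cite{alqaiwani2024spectra} without argument. Your proposal is correct and is precisely the standard route: use the two dichotomy estimates to control the ratio $\|\Phi(t,\omega)P_\gamma(\omega)x\|/\|\Phi(t,\omega)(\mathbbm{1}-P_\gamma(\omega))x\|$, translate this into a projective distance bound to $A(\theta_t\omega)$, and then pass from almost-sure convergence to convergence in probability. The identification $B(A)=U$ is in fact exact (not merely almost sure): by invariance of $\mathcal{R}(P_\gamma)$ a point of $R(\omega)$ never enters $U$, while any point of $U(\omega)$ lies in $B(A)(\omega)$ at $t=0$, so the corresponding repeller $\mathbb{P}^{d-1}\setminus B(A)$ equals $R$ on the nose. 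The measurability of $\beta$ follows from Lemma~\ref{measurability lemma randomprobmas} applied to the continuous function $(\omega,\mathbb{P}x)\mapsto \|x\|/\|(\mathbbm{1}-P_\gamma(\omega))x\|$ on the random set $C$, so the bookkeeping you flag is routine.
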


\section{The Morse spectrum}

We first prove that the projectivised linear random dynamical system $\mathbb{P}\Phi$ possesses a unique finest weak Morse decomposition. 

In the following we assume that whenever $\mathbb{T}=\mathbb{R}$, the probability space $\Omega$ is also a metric space and $t\mapsto \theta_{t}(\omega)$ is continuous for any $\omega\in \Omega$. 

The following lemma is taken from \cite[Lemma A1]{kloeden2011nonautonomous}.
\begin{lemma}
\label{metric projective space lemma}
    For all $\eta>0$, there exists a $\delta\in (0,1)$ such that for any $x,y\in \mathbb{R}^{d}\setminus \{0\}$ with 
    $$\frac{\langle x,y \rangle ^{2}}{\|x\|^{2}\|y\|^{2}}\geq 1-\delta$$
    one has $$d_{\mathbb{P}}(\mathbb{P}x,\mathbb{P}y)\leq \eta.$$
\end{lemma}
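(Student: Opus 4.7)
The plan is to reduce the statement to the purely geometric fact that two unit vectors whose inner product is close to $\pm 1$ are close in the projective metric, and then calibrate the constants.

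First, I would normalise: set $u := x/\|x\|$ and $v := y/\|y\|$, so that $\langle u,v\rangle = \langle x,y\rangle/(\|x\|\|y\|)$ and the hypothesis becomes $\langle u,v\rangle^{2}\geq 1-\delta$, equivalently $|\langle u,v\rangle|\geq\sqrt{1-\delta}$. By the definition \eqref{metric on projected space},
\[
d_{\mathbb{P}}(\mathbb{P}x,\mathbb{P}y)=\min\bigl\{\|u-v\|,\,\|u+v\|\bigr\}.
\]

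Next I would carry out the elementary identity
\[
\|u\pm v\|^{2}=\|u\|^{2}\pm 2\langle u,v\rangle+\|v\|^{2}=2\pm 2\langle u,v\rangle,
\]
so that
\[
\min\bigl\{\|u-v\|^{2},\|u+v\|^{2}\bigr\}=2-2|\langle u,v\rangle|.
\]
Combined with $|\langle u,v\rangle|\geq\sqrt{1-\delta}$, this gives the bound $d_{\mathbb{P}}(\mathbb{P}x,\mathbb{P}y)^{2}\leq 2-2\sqrt{1-\delta}$.

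Finally, given $\eta>0$ I would choose $\delta\in(0,1)$ so small that $2-2\sqrt{1-\delta}\leq\eta^{2}$; concretely, any $\delta\leq\min\{1,\eta^{2}-\eta^{4}/4\}$ works, since then $\sqrt{1-\delta}\geq 1-\eta^{2}/2$. (If $\eta\geq\sqrt{2}$, the conclusion is trivial from the codomain of $d_{\mathbb{P}}$, so one may always reduce to $\eta<\sqrt{2}$ and the choice above is valid.) This yields $d_{\mathbb{P}}(\mathbb{P}x,\mathbb{P}y)\leq\eta$ as required. There is no real obstacle here: the argument is entirely computational, and the only thing to watch is that the $\min$ in the projective metric is precisely what converts the ambiguity of the sign of $\langle u,v\rangle$ into the absolute value, so no case distinction on the sign is needed.
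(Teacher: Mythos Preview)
Your argument is correct: the normalisation, the identity $\min\{\|u-v\|^{2},\|u+v\|^{2}\}=2-2|\langle u,v\rangle|$, and the calibration of $\delta$ are all fine. Note that the paper does not actually prove this lemma but simply cites it from \cite[Lemma A1]{kloeden2011nonautonomous}, so there is no in-paper proof to compare against; your self-contained computation is exactly the kind of elementary verification one would expect.
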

\begin{lemma}
\label{Lemma 1 proof weak attractor corresponds to subspace}
Let $(\Omega,\mathcal{F},\mu,(\theta_{t})_{t\in\mathbb{T}})$ be an invertible measure preserving dynamical system. Suppose $X:\Omega\to\mathbb{R}_{>0}$ is a positive random variable, then the family of random variables $(X_{t}(\cdot)=X(\theta_{-t}\cdot))_{t\in\mathbb{T}}$ does not converge to zero in probability for $(t\to \infty)$. As a consequence there exists a sequence $(t_{k})_{k\in\mathbb{N}}$, $t_{k}\uparrow \infty$ such that for any subsequence $(t_{k_{m}})_{m\in\mathbb{N}}$ of $(t_{k})_{k\in\mathbb{N}}$ there exists a positive measure set $A\in \mathcal{F}$ with 
$$\limsup_{m\to\infty} X(\theta_{-t_{k_{m}}}\omega)>0 \text{ for any } \omega\in A.$$
\end{lemma}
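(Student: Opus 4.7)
The plan is to exploit the measure-preserving property of $\theta$ directly, which forces the distribution of $X_t$ to be independent of $t$. Since $X$ is almost surely strictly positive, there must exist some threshold $\varepsilon>0$ such that $\mu\{X>\varepsilon\}>0$; denote this measure by $c>0$. Because $\theta_{-t}$ is a measure-preserving transformation for every $t\in\mathbb{T}$, we have
\[
\mu\{\omega:X_t(\omega)>\varepsilon\}=\mu\{\omega:X(\theta_{-t}\omega)>\varepsilon\}=\mu\{\omega:X(\omega)>\varepsilon\}=c,
\]
which immediately prevents $X_t\to 0$ in probability as $t\to\infty$.

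For the second assertion, the first part produces a uniform lower bound $c>0$ on $\mu\{X_t>\varepsilon\}$ valid for \emph{every} $t$, so any choice of sequence $t_k\uparrow\infty$ (say $t_k=k$) will do. Given an arbitrary subsequence $(t_{k_m})_{m\in\mathbb N}$, I would set
\[
B_m:=\{\omega\in\Omega:X(\theta_{-t_{k_m}}\omega)>\varepsilon\},\qquad \mu(B_m)=c,
\]
and take $A:=\limsup_m B_m=\bigcap_{N\in\mathbb N}\bigcup_{m\geq N}B_m$. Applying reverse Fatou (valid because $\mu$ is a finite measure) gives
\[
\mu(A)\geq\limsup_{m\to\infty}\mu(B_m)=c>0,
\]
and for every $\omega\in A$, membership in $B_m$ for infinitely many $m$ yields $\limsup_{m\to\infty}X(\theta_{-t_{k_m}}\omega)\geq\varepsilon>0$.

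The argument is essentially bookkeeping; the only subtle point is recognising that the quantifier order matters in the conclusion (the set $A$ is allowed to depend on the chosen subsequence), which is exactly what reverse Fatou delivers without any diagonal extraction. No real obstacle is expected: the whole statement reduces to the fact that a positive random variable has positive mass above some level, combined with invariance of the distribution under $\theta$. The only place one needs to be slightly careful is the choice of $\varepsilon$, which must be fixed \emph{before} selecting the sequence so that the same $c>0$ lower bound works simultaneously for all $m$.
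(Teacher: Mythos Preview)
Your proof is correct. The first part agrees with the paper's argument (identical distribution via measure preservation), only made more explicit by fixing a level $\varepsilon$ with $c=\mu\{X>\varepsilon\}>0$.

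For the second part you take a slightly different and more direct route than the paper. The paper invokes the abstract characterisation ``$X_t\to 0$ in probability if and only if every sequence $t_k\uparrow\infty$ has a subsequence along which $X_{t_{k_m}}\to 0$ almost surely'' and then negates it to produce \emph{some} sequence with the desired property. You instead work with the explicit level sets $B_m=\{X_{t_{k_m}}>\varepsilon\}$ and apply reverse Fatou to $\limsup_m B_m$, which yields the conclusion for \emph{every} sequence $t_k\uparrow\infty$, not merely for one. This is a marginally stronger statement obtained by more elementary means; it bypasses the subsequence characterisation entirely and makes the dependence of $A$ on the chosen subsequence completely transparent. The paper's route is shorter to state but relies on a black-box equivalence, whereas yours is self-contained.
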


\begin{proof}
That $(X_{t})_{t\in \mathbb{T}}$ does not converge to zero in probability follows from the fact that any $X_{t}$, $t\in\mathbb{T}$ has the same distribution as $X$ due to measure preservation. The second part of the statement follows from the fact that $\lim_{t\to\infty}X_{t}=0$ (in probability) if and only if any sequence $(t_{k})_{k\in\mathbb{N}}$ with $t_{k}\uparrow \infty$ admits a subsequence $(t_{k_{m}})_{m\in\mathbb{N}}$ with the property that $\lim_{m\to\infty} X_{t_{k_{m}}}=0$ almost surely.
\end{proof}
\begin{lemma}
\label{Lemma 2 proof weak attractor corresponds to subspace}
Suppose that $\mathbb{P}^{-1}A(\omega)$ is an invariant linear subspace of $\mathbb{R}^{d}$ for all $\omega\in B$ with $B\in \mathcal{F}$ having positive measure. Then $\mathbb{P}^{-1}A$ is an invariant linear subspace of $\mathbb{R}^{d}$ almost surely.
\end{lemma}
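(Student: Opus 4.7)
The plan is to exploit ergodicity of the base flow $\theta$ together with the invariance of $A$ under $\mP\Phi$, which is built into the context in which this lemma is applied. Invariance of $A$ translates to the statement that the family of preimages $\mP^{-1}A$ is invariant under the linear cocycle $\Phi$ (identifying $\mP^{-1}A(\omega)$ with $\{x\in\R^{d}\setminus\{0\}:\mP x\in A(\omega)\}\cup\{0\}$, so that linearity is a meaningful property), i.e.\
\[
\Phi(t,\omega)\,\mP^{-1}A(\omega)=\mP^{-1}A(\theta_{t}\omega)\quad\text{for all }t\in\T,\ \omega\in\Omega.
\]

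First, I would show that the $\theta$-saturation of $B$ has full measure. Consider
\[
B^{\ast}:=\bigcup_{t\in\T}\theta_{-t}(B).
\]
This set is $\theta$-invariant in the sense that $\theta_{s}^{-1}(B^{\ast})=B^{\ast}$ for every $s\in\T$, and it contains $B$, so $\mu(B^{\ast})>0$. Ergodicity of $\theta$ then forces $\mu(B^{\ast})=1$. Hence, for $\mu$-almost every $\omega\in\Omega$, there exists some $t=t(\omega)\in\T$ with $\theta_{t}\omega\in B$.

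Next, I would transport linearity back via the cocycle. Fix such an $\omega$ and $t$. By hypothesis, $\mP^{-1}A(\theta_{t}\omega)$ is a linear subspace of $\R^{d}$. Because $\Phi(t,\omega):\R^{d}\to\R^{d}$ is a linear bijection, its inverse maps linear subspaces to linear subspaces, so from the invariance identity above,
\[
\mP^{-1}A(\omega)=\Phi(t,\omega)^{-1}\,\mP^{-1}A(\theta_{t}\omega)
\]
is a linear subspace of $\R^{d}$. Since this holds on a set of full $\mu$-measure, $\mP^{-1}A$ is an invariant linear subspace of $\R^{d}$ almost surely, as required.

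There is no serious obstacle here: the only subtlety is the bookkeeping around the zero vector (ensuring $\mP^{-1}A(\omega)$ is regarded as a subset of $\R^{d}$ closed under scalar multiplication, and that the displayed invariance then holds strictly), and the standard ergodicity argument for $\mu(B^{\ast})=1$. Both are routine, so the proof is essentially a two-line argument once the invariance of $A$ and the ergodicity of $\theta$ are brought to bear.
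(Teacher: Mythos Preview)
Your argument is essentially identical to the paper's: form the $\theta$-saturation of $B$, invoke ergodicity to get full measure, and then use invariance of $A$ together with linearity of $\Phi(t,\omega)$ to transport the subspace property. The only point the paper addresses explicitly that you gloss over is the measurability of $B^{\ast}=\bigcup_{t\in\T}\theta_{-t}(B)$ in the continuous-time case $\T=\R$, where the union is uncountable; the paper handles this via the standing assumption that $t\mapsto\theta_{t}\omega$ is continuous, reducing to a countable union over $\T\cap\mathbb{Q}$.
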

\begin{proof}
Let $\tilde{\Omega}:=\bigcup_{t\in \mathbb{T}}\theta_{t}(B)$, then by ergodicity $\tilde{\Omega}$ has full measure. Please note that $\tilde{\Omega}:=\bigcup_{t\in \mathbb{T}}\theta_{t}(B)$ is measurable as $t\mapsto \theta_{t}\omega$ is continuous for every $\omega\in \Omega$ which implies that $\tilde{\Omega}:=\overline{\bigcup_{t\in \mathbb{T}\cap\mathbb{Q}}\theta_{t}(B)}$.  Let $\tilde{\omega}\in \tilde{\Omega}$, then $\tilde{\omega}=\theta_{t}(\omega)$ for some $t\in\mathbb{T}$ and some $\omega\in B$. Hence by invariance of $A$ one has $\mathbb{P}^{-1}A(\tilde{\omega})=\mathbb{P}^{-1}A(\theta_{t}\omega)=\Phi(t,\omega)\mathbb{P}^{-1}A(\omega)$. Now as $ \Phi(t,\omega)$ is a linear mapping the statement follows.
\end{proof}

The following proposition says that, given a weak attractor $A$ of $\mathbb{P}\Phi$, $\mathbb{P}^{-1}A(\omega)$ is a linear subspace of $\mathbb{R}^{d}$ for almost every $\omega\in \Omega$. This result is crucial for the existence of a unique finest weak Morse decomposition since it implies that there can be at most $d+1$ nested weak attractors. The proof is an adaption of the proof given in \cite[Proposition~3.3.3]{Call2014} which demonstrates that a uniform pullback attractor of $\mathbb{P}\Phi$ corresponds to a subspace in $\mathbb{R}^{d}$ almost surely. 
\begin{proposition}
\label{attractor corresponds to subspace}
Let $A$ be a local weak attractor for $\mathbb{P}\Phi$, then $\mathbb{P}^{-1}A$ is almost surely a linear subspace of $\mathbb{R}^{d}$.
\end{proposition}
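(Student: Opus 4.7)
The plan is to argue by contradiction, aiming to violate Lemma~\ref{Lemma 1 proof weak attractor corresponds to subspace} applied to the positive random variable $X(\omega):=\tilde{d}(A(\omega),R(\omega))$, where $R$ is the local weak repeller corresponding to $A$. First I would observe that $\mathbb{P}^{-1}A(\omega)\cup\{0\}\subset\mathbb{R}^{d}$ is automatically a closed cone (closed because $A$ is a compact random set in the projective space, a cone because $\mathbb{P}$ identifies collinear vectors), so being a linear subspace is equivalent to being closed under addition, and Lemma~\ref{Lemma 2 proof weak attractor corresponds to subspace} reduces the statement to verifying this closure on a set of positive measure.

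Assuming for contradiction that on some $B\in\mathcal{F}$ of positive measure $\mathbb{P}^{-1}A(\omega)$ is not closed under addition, I would apply a measurable selection theorem (Kuratowski--Ryll-Nardzewski) to obtain measurable maps $x_{1},x_{2}\colon B\to\mathbb{R}^{d}\setminus\{0\}$ with $\mathbb{P}x_{i}(\omega)\in A(\omega)$, $z(\omega):=x_{1}(\omega)+x_{2}(\omega)\neq0$, and $\mathbb{P}z(\omega)\notin A(\omega)$. Since $B(R)(\omega)=\mathbb{P}^{d-1}\setminus A(\omega)$ by the earlier attractor--repeller proposition, $\mathbb{P}z(\omega)\in B(R)(\omega)$; after a time shift placing $\mathbb{P}z$ in a weak repelling neighborhood of $R$, the weak repelling property gives
\[
\dist\bigl(\mathbb{P}\Phi(-s,\omega)\mathbb{P}z(\omega),\,R(\theta_{-s}\omega)\bigr)\longrightarrow 0\quad\text{in probability, as }s\to\infty,
\]
while invariance of $A$ keeps $\mathbb{P}\Phi(-s,\omega)\mathbb{P}x_{i}(\omega)\in A(\theta_{-s}\omega)$ for every $s\geq 0$.

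The technical heart of the argument is to show that $\mathbb{P}\Phi(-s,\omega)\mathbb{P}z(\omega)$ is \emph{also} close to $A(\theta_{-s}\omega)$ along some subsequence on a positive-measure subset. Writing $\Phi(-s,\omega)z=\Phi(-s,\omega)x_{1}+\Phi(-s,\omega)x_{2}$ and setting $\rho_{s}(\omega):=\|\Phi(-s,\omega)x_{1}\|/\|\Phi(-s,\omega)x_{2}\|$, Lemma~\ref{metric projective space lemma} implies that as soon as $\rho_{s_{k}}(\omega)\to\infty$ (respectively $\to 0$) along a subsequence on a positive-measure set, the projective direction of $\Phi(-s_{k},\omega)z$ merges with that of $\Phi(-s_{k},\omega)x_{1}$ (respectively $\Phi(-s_{k},\omega)x_{2}$), so $\dist(\mathbb{P}\Phi(-s_{k},\omega)\mathbb{P}z,A(\theta_{-s_{k}}\omega))\to 0$ in probability. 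I would produce such a subsequence by applying Lemma~\ref{Lemma 1 proof weak attractor corresponds to subspace} to a suitable positive random variable built out of $\rho_{s}$, together with the extra freedom to rescale $x_{2}\mapsto\lambda(\omega)x_{2}$ by a measurable scalar while preserving $\mathbb{P}(x_{1}+\lambda x_{2})\notin A(\omega)$; this rescaling freedom is available because $\mathbb{P}^{-1}A(\omega)\cap\operatorname{span}(x_{1}(\omega),x_{2}(\omega))$ is a proper closed sub-cone of that plane.

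Combining the two convergences then yields $\tilde{d}(A(\theta_{-s_{k}}\omega),R(\theta_{-s_{k}}\omega))\to 0$ in probability along $s_{k}$. Because $A(\omega)$ and $R(\omega)$ are disjoint compact subsets of $\mathbb{P}^{d-1}$ almost surely, $X$ is strictly positive a.s., and Lemma~\ref{Lemma 1 proof weak attractor corresponds to subspace} precludes $X\circ\theta_{-t}\to 0$ in probability, which is the desired contradiction. Lemma~\ref{Lemma 2 proof weak attractor corresponds to subspace} then propagates the subspace property to almost every $\omega$. The main obstacle I anticipate is the \emph{balanced} regime in which $\rho_{s}$ remains bounded and bounded away from zero along every subsequence; this is precisely where the adaptation from the uniform pullback argument of \cite[Proposition~3.3.3]{Call2014} to the present weak/non-uniform setting genuinely requires the two preliminary lemmas, since one must either re-select the pair $(x_{1},x_{2})$ via measurable selection to force one summand to dominate, or exploit the measure-theoretic non-degeneracy of the pullback ratios guaranteed by Lemma~\ref{Lemma 1 proof weak attractor corresponds to subspace}.
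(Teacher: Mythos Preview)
Your strategy is quite different from the paper's and, as you yourself flag, the ``balanced regime'' is where it breaks down---and your proposed remedies do not close that gap. The quantity $\rho_{s}(\omega)=\|\Phi(-s,\omega)x_{1}(\omega)\|/\|\Phi(-s,\omega)x_{2}(\omega)\|$ is \emph{not} of the form $X(\theta_{-s}\omega)$ for a fixed random variable $X$; it depends on the entire trajectory of the cocycle from time $0$ to $-s$, not merely on the shifted base point. Lemma~\ref{Lemma 1 proof weak attractor corresponds to subspace} therefore gives no information about $\rho_{s}$. The rescaling $x_{2}\mapsto\lambda(\omega)x_{2}$ multiplies every $\rho_{s}(\omega)$ by the same factor $1/\lambda(\omega)$, so if $\rho_{s}(\omega)$ stays in a compact interval $[c(\omega),C(\omega)]\subset(0,\infty)$ for all $s$, rescaling merely translates that interval and cannot force $\rho_{s}\to 0$ or $\infty$. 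In this regime you have no mechanism to make $\mathbb{P}\Phi(-s,\omega)\mathbb{P}z$ approach $A(\theta_{-s}\omega)$, and the contradiction with $X(\omega)=\tilde d(A(\omega),R(\omega))$ never materialises.

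The paper avoids the repeller entirely and instead uses Lemma~\ref{Lemma 1 proof weak attractor corresponds to subspace} on the random variable $\alpha(\omega)=\sup\{\varepsilon:\overline{B}_{2\varepsilon}(A(\omega))\subset U(\omega)\}$. This extracts a positive-measure set $\tilde B$ and, for each $\omega\in\tilde B$, a sequence $t_{k}\uparrow\infty$ along which a \emph{uniform} ball $\overline{B}_{\alpha}(A(\theta_{-t_{k}}\omega))$ is pullback-attracted to $A(\omega)$. That uniform margin is exactly what powers the ratio argument: taking $u\in\mathbb{P}^{-1}A(\omega)$ a boundary point of $A(\omega)\cap\mathbb{P}L_{u,v}$ and $v\notin\mathbb{P}^{-1}A(\omega)$, if $\|\Phi(-t_{k},\omega)u\|/\|\Phi(-t_{k},\omega)v\|\not\to 0$ then for small $c$ the point $\mathbb{P}\Phi(-t_{k},\omega)\mathbb{P}(cv+u)$ lies in the $\alpha$-ball around $A(\theta_{-t_{k}}\omega)$ for all $k$, hence $\mathbb{P}(cv+u)\in A(\omega)$, contradicting that $\mathbb{P}u$ was a boundary point. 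From this one shows $A(\omega)\cap\mathbb{P}L_{u,v}$ is a single point for every such plane, forcing $\mathbb{P}^{-1}A(\omega)$ to be linear on $\tilde B$; then Lemma~\ref{Lemma 2 proof weak attractor corresponds to subspace} finishes. The essential idea you are missing is that the attracting property, applied to a ball of \emph{deterministic} radius along the subsequence furnished by Lemma~\ref{Lemma 1 proof weak attractor corresponds to subspace}, is what forces the growth-rate separation---it is not a separate combinatorial or selection argument.
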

\begin{proof}   
For fixed $\omega\in \Omega$ let $C\subset \mathbb{S}^{d-1}\setminus \mathbb{P}^{-1}A(\omega)$ denote an arbitrary compact set. For $u,v\in \mathbb{R}^{d}$, write $L_{u,v}:=\text{span}\{u,v\}$. The proof is divided into six steps.
\\
\emph{Step 1:}
It will be shown that there exists a positive measure set $B\in \mathcal{F}$ such that for any $\omega\in B$ there exists a sequence $t_{k}\uparrow \infty$ and a number $\alpha>0$ with 
$$
\lim_{k\to\infty}\mathrm{dist}_{\mathbb{P}}(\mathbb{P}\Phi(t_{k},\theta_{-t_{k}}\omega)\overline{B}_{\alpha}(A(\theta_{-t_{k}}\omega)),A(\omega))=0.
$$

Let $U$ be a fundamental neighborhood of $A$ and consider the following random variable:
$$\alpha:\Omega \to \mathbb{R}_{>0}, \ \omega\mapsto \sup\{\varepsilon>0:\overline{B}_{2\varepsilon}(A(\omega))\subset U(\omega)\}.$$
Then 
\begin{align}
\label{Convergence in prob}
\lim_{t\to\infty}\mathrm{dist}_{\mathbb{P}}(\mathbb{P}\Phi(t,\theta_{-t}\omega)\overline{B}_{\alpha(\theta_{-t}\omega)}(A(\theta_{-t}\omega)),A(\omega))=0 \text{ in probability. }
\end{align}
By Lemma \ref{Lemma 1 proof weak attractor corresponds to subspace} there exists a sequence $t_{k}\uparrow\infty$ such that for any subsequence $t_{k_{m}}\uparrow\infty$ there exists $B\in \mathcal{F}$ with positive measure such that 
\begin{align}
\label{limsup}
\limsup_{m\to\infty} \alpha(\theta_{-t_{k_{m}}}\omega)>0 \text{ for any } \omega\in B.
\end{align}
Let $(t_{k_{m}})_{m\in\mathbb{N}}$ be a subsequence of $(t_{k})_{k\in\mathbb{N}}$ such that \eqref{Convergence in prob} holds for any $\omega\in F$ along $(t_{k_{m}})_{m\in\mathbb{N}}$ where $F\in \mathcal{F}$ is a full measure set.  Let $B\in \mathcal{F}$ be the positive measure set such that \eqref{limsup} holds for $(t_{k_{m}})_{m\in\mathbb{N}}$ and let $\tilde{B}=F\cap B$. Now for any $\omega \in \tilde{B}$ we have that $\eqref{Convergence in prob}$ and \eqref{limsup} hold. Now let $\omega\in \tilde{B}$, by passing to a further subsequence along which the $\limsup$ in \eqref{limsup} is obtained  which we label w.l.o.g again by $(t_{k})_{k\in\mathbb{N}}$ we have that 
$$
\lim_{k\to\infty}\mathrm{dist}_{\mathbb{P}}(\mathbb{P}\Phi(t_{k},\theta_{-t_{k}}\omega)\overline{B}_{\alpha}(A(\theta_{-t_{k}}\omega)),A(\omega))=0.
$$
with $\alpha=\inf_{k\in\mathbb{N}}\alpha(\theta_{-t_{k}}\omega)>0$.
\\
\emph{Step 2:} Let $\tilde{B}\in \mathcal{F}$, $\alpha>0$ and $(t_{k})_{k\in\mathbb{N}}$ be as in Step 1. We show that for any $\omega\in \tilde{B}$, any \( u \in \mathbb{P}^{-1}A(\omega)\setminus \{0\} \) and \( v \in C \) such that \( \mathbb{P}u \) is a boundary point of \( A(\omega) \cap \mathbb{P}L_{u,v} \) with respect to \( \mathbb{P}L_{u,v} \), one has

\begin{equation}
\label{quotient limit proof subspace}
\lim_{k \to \infty} \frac{\|\Phi(-t_{k}, \omega)u\|}{\|\Phi(-t_{k}, \omega)v\|} = 0. 
\end{equation}

By Step 1,

\begin{equation}
\label{Application Step 1 Prop linear subspace}
\lim_{k \to \infty} \mathrm{dist}_\mathbb{P}(\mathbb{P}\Phi(t_{k}, \theta_{-t_{k}}\omega)B_\alpha(A(\theta_{-t_{k}}\omega)), A(\omega)) = 0 \quad \text{ for any }\omega\in \tilde{B}
\end{equation}

Now, by Lemma \ref{metric projective space lemma} there exists a \( \delta \in (0, 1) \) such that \( d_\mathbb{P}(\mathbb{P}x, \mathbb{P}y) \leq \frac{\alpha}{2} \) holds for all \( x, y \in \mathbb{R}^d \setminus \{0\} \) with

\begin{equation}
\label{delta inequality prop lin subspace}
\frac{\langle x, y \rangle^2}{\|x\|^2 \|y\|^2} \geq 1 - \delta. 
\end{equation}

To the aim of a contradiction assume that there exists an \( \omega \in \tilde{B} \) such that \eqref{quotient limit proof subspace} does not hold. Then there exists a \( \gamma > 0 \) and a sub-sequence of $(t_{k})_{k\in\mathbb{N}}$ which we label w.l.o.g again by  $(t_{k})_{k\in\mathbb{N}}$ such that

\[
\frac{\|\Phi(-t_k, \omega)v\|}{\|\Phi(-t_k, \omega)u\|} \leq \gamma, \quad \text{for all } k \in \mathbb{N}.
\]

For $c\neq0$ with  \( |c| \) sufficiently small, we obtain for \( k \in \mathbb{N} \):

\[
\frac{\langle \Phi(-t_k, \omega)(cv + u), \Phi(-t_k, \omega)u \rangle^2}{\|\Phi(-t_k, \omega)(cv + u)\|^2 \|\Phi(-t_k, \omega)u\|^2} \geq 1 - \delta.
\]

Hence taking \( |c| \) sufficiently small, \eqref{delta inequality prop lin subspace} implies 

\[
\mathrm{dist}_\mathbb{P}(\mathbb{P}\Phi(-t_k, \omega)\mathbb{P}(cv + u), A(\theta_{-t_k}\omega)) \leq \frac{\alpha}{2}, \quad \text{for all } k \in \mathbb{N}.
\]

This yields:
\[
\mathrm{dist}_\mathbb{P}(\mathbb{P}(cv + u), A(\omega)) = \lim_{k \to \infty} \mathrm{dist}_\mathbb{P}(\mathbb{P}\Phi(t_k, \theta_{-t_k}\omega)\mathbb{P}\Phi(-t_k, \omega)\mathbb{P}(cv + u), A(\omega)) = 0,
\]

This is a contradiction to the assumption that \( \mathbb{P}u \) is a boundary point of \( A(\omega) \cap \mathbb{P}L_{u,v} \) in \( \mathbb{P}L_{u,v} \).
\\
\emph{Step 3: }
We show that for any \( \omega \in \tilde{B} \), any \( u \in \mathbb{P}^{-1}A(\omega)\setminus \{0\} \) and \( v \in C \), the intersection \( A(\omega) \cap \mathbb{P}L_{u,v} \) consists of one point.

Note that any point in \( \mathbb{P}L_{u,v} \setminus \{\mathbb{P}u\} \) can be expressed by \( \mathbb{P}(v + cu) \) for some \( c \in \mathbb{R} \). From Step 2 we obtain that for any \( \omega \in \tilde{B} \):

\[
\lim_{k \to \infty} \frac{\langle \Phi(-t_{k}, \omega)(v + cu), \Phi(-t_{k}, \omega)v \rangle^2}{\|\Phi(-t_{k}, \omega)(v + cu)\|^2 \|\Phi(-t_{k}, \omega)v\|^2} = 1
\]

whenever \( \mathbb{P}u \) is a boundary point of \( A(\omega) \cap \mathbb{P}L_{u,v} \) relative to \( \mathbb{P}L_{u,v} \). This yields

\[
\lim_{k \to \infty} d_\mathbb{P}(\mathbb{P}\Phi(-t_{k}, \omega)\mathbb{P}(v + cu), \mathbb{P}\Phi(-t_{k}, \omega)\mathbb{P}v) = 0. \quad 
\]

Suppose \( \mathbb{P}(v + cu) \in A(\omega) \), then there is a \( K \in \mathbb{N} \) such that \( \mathbb{P}\Phi(-t_{k}, \omega)\mathbb{P}v \in B_\alpha(A(\theta_{-t_{k}} \omega)) \) for all \( k \geq K \), and hence

\[
\mathrm{dist}_\mathbb{P}(\mathbb{P}v, A(\omega)) = \lim_{k \to \infty} \mathrm{dist}_\mathbb{P}(\mathbb{P}\Phi(t_{k}, \theta_{-t_{k}} \omega)\mathbb{P}\Phi(t, \omega)Pv, A(\omega)) = 0,
\]

which contradicts the fact that $\mathbb{P}C$ is compact and  $\mathbb{P}v \in\mathbb{P}C$. Hence, \( A(\omega) \cap \mathbb{P}L_{u,v} \) consists of a single point.
\\
\emph{Step 4:}
It now follows directly from Steps 2 and 3 that for any \( \omega \in \tilde{B} \), and all \( u \in \mathbb{P}^{-1}A(\omega)\setminus \{0\} \) and \( v \in C \), one has

\[
\lim_{k \to \infty} \frac{\|\Phi(-t_{k}, \omega)u\|}{\|\Phi(-t_{k}, \omega)v\|} = 0.
\]
\\
\emph{Step 5:} We are now ready to prove that \( \mathbb{P}^{-1}A(\omega) \) is a linear subspace of \( \mathbb{R}^d \) for $\omega\in \tilde{B}$. From step 3 we have that for \( \omega \in \tilde{B} \), \( u \in \mathbb{P}^{-1}A(\omega) \) and \( v \in C \), \( A(\omega) \cap \mathbb{P}L_{u,v} \) consists of a single point. This implies that for any  \( x, y \in \mathbb{R}^d\setminus \{0\} \), it holds that \( A(\omega) \cap \mathbb{P}L_{x,y} \) is either a single point, the empty set, or equal to \( \mathbb{P}L_{x,y} \). This implies the statement.
\\
\emph{Step 6:}
Using Lemma \ref{Lemma 2 proof weak attractor corresponds to subspace} we obtain that $\mathbb{P}^{-1}A(\omega)$ is a linear subspace of $\mathbb{R}^{d}$ almost surely. 
\end{proof}

The following proposition says that a weak attractor-repeller pair $(A,R)$ corresponds to a direct sum decomposition of $\mathbb{R}^{d}$. We omit the proof and note that this is a minor adaptation of that for \cite[Theorem~3.3.5~(ii)]{Call2014}  to the case of weak attractors.
\begin{proposition}
\label{direct sum decomp of attractor/repeller}
    Let $(A,R)$ be a weak attractor-repeller pair for $\mathbb{P}\Phi$. Then $\mathbb{P}^{-1}A(\omega)$ and $\mathbb{P}^{-1}R(\omega)$ are almost surely linear subspaces of $\mathbb{R}^{d}$. Moreover we have that $\mathrm{dim}(\mathbb{P}^{-1}A(\omega))$ and $\mathrm{dim}(\mathbb{P}^{-1}R(\omega))$ are almost surely constant and that $$\mathbb{P}^{-1}A(\omega)\oplus \mathbb{P}^{-1}R(\omega)=\mathbb{R}^{d}  \text{ almost surely. }$$
\end{proposition}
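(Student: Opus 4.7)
The first claim decomposes into two applications of Proposition~\ref{attractor corresponds to subspace}. That proposition gives directly that $\mathbb{P}^{-1}A(\omega)$ is almost surely a linear subspace. For $\mathbb{P}^{-1}R$, consider the time-reversed linear random dynamical system $\tilde{\Phi}(t,\omega):=\Phi(-t,\theta_{t}\omega)$ over the metric dynamical system $(\theta_{-t})_{t\in\mathbb{T}}$. A local weak repeller for $\Phi$ is a local weak attractor for $\tilde{\Phi}$, so Proposition~\ref{attractor corresponds to subspace} applied to $\tilde{\Phi}$ yields that $\mathbb{P}^{-1}R(\omega)$ is almost surely a linear subspace of $\mathbb{R}^{d}$.

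For the almost sure constancy of dimensions, I would observe that for every $t\in\mathbb{T}$ the map $\Phi(t,\omega)\colon \mathbb{P}^{-1}A(\omega)\to \mathbb{P}^{-1}A(\theta_{t}\omega)$ is a linear bijection, so the measurable function $\omega\mapsto \dim\mathbb{P}^{-1}A(\omega)$ is $\theta_{t}$-invariant for each $t$; ergodicity of $\theta$ forces it to be constant almost surely. The analogous statement for $\mathbb{P}^{-1}R$ is obtained identically.

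To establish the direct sum decomposition, I would first note that the trivial intersection $\mathbb{P}^{-1}A(\omega)\cap \mathbb{P}^{-1}R(\omega)=\{0\}$ almost surely is immediate from the definition of a weak attractor-repeller pair: one has $R=X\setminus B(A)$ and $A\subseteq B(A)$, hence $A(\omega)\cap R(\omega)=\emptyset$ almost surely, and the corresponding linear preimages therefore meet only at the origin.

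The main obstacle is then to show $\dim\mathbb{P}^{-1}A(\omega)+\dim\mathbb{P}^{-1}R(\omega)=d$ almost surely. Writing $S(\omega):=\mathbb{P}^{-1}A(\omega)+\mathbb{P}^{-1}R(\omega)$, this is an invariant random subspace whose dimension is almost surely constant by the previous steps. The plan is to argue by contradiction: suppose $\dim S<d$ on a set of full measure; then on a set of positive measure one can pick $\omega$ together with $v\in \mathbb{R}^{d}\setminus S(\omega)$, so that $\mathbb{P}v\notin \mathbb{P}S(\omega)\supseteq A(\omega)\cup R(\omega)$ and hence $\mathbb{P}v\in B(A)(\omega)\cap B(R)(\omega)$. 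Applying Lemma~\ref{Lemma 1 proof weak attractor corresponds to subspace} in both time directions then yields sequences $t_{k}\to\infty$ and $s_{k}\to-\infty$ along which $\mathbb{P}\Phi(t_{k},\omega)v$ and $\mathbb{P}\Phi(s_{k},\omega)v$ weakly approach $A(\theta_{t_{k}}\omega)$ and $R(\theta_{s_{k}}\omega)$ respectively. Mimicking Steps~2--4 of the proof of Proposition~\ref{attractor corresponds to subspace} --- with the role of $u$ played first by elements of $\mathbb{P}^{-1}A(\omega)$ and then of $\mathbb{P}^{-1}R(\omega)$ --- and combining the resulting forward and backward asymptotic splittings, one produces $a\in \mathbb{P}^{-1}A(\omega)$ and $r\in \mathbb{P}^{-1}R(\omega)$ with $v=a+r$, contradicting $v\notin S(\omega)$. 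The delicate point here, and the true technical obstacle, is controlling the linear (not merely projective) errors when pulling the approximants of $\Phi(t_{k},\omega)v$ back by $\Phi(-t_{k},\theta_{t_{k}}\omega)$: this requires a double-limit argument combining the forward and backward projective convergences, adapting the uniform pullback compactness argument of \cite{Call2014} to the weak-attractor setting.
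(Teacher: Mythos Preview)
The paper does not actually prove this proposition: it states that the argument is a minor adaptation of \cite[Theorem~3.3.5~(ii)]{Call2014} to the weak setting and omits the details. Your outline is consistent with that deferred approach --- the application of Proposition~\ref{attractor corresponds to subspace} to $A$ and, via time-reversal, to $R$, the ergodicity argument for constancy of the dimensions, and the disjointness of $A$ and $R$ for the trivial intersection are all correct and standard, and for the spanning step you (like the paper) ultimately point back to Callaway's argument, correctly flagging the passage from projective convergence to a linear decomposition $v=a+r$ as the technical crux.

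One minor correction: the time-reversed cocycle over $\tilde\theta_t=\theta_{-t}$ is $\tilde\Phi(t,\omega)=\Phi(-t,\omega)$, not $\Phi(-t,\theta_t\omega)=\Phi(t,\omega)^{-1}$; the latter does not satisfy the cocycle identity over $\tilde\theta$. This is cosmetic and does not affect the substance of your argument.
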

Using very similar arguments as in \cite[Theorem~3.3.6]{Call2014}, one finally obtains the following theorem.
\begin{theorem}
\label{Theorem unique finest Morse decomposition}
    $\mathbb{P}\Phi$ has a unique finest Morse decomposition $\{M_{1},...,M_{n}\}$. Moreover $n\leq d$ and for almost every $\omega\in \Omega$ we have
    $$\mathbb{P}^{-1}M_{1}(\omega)\oplus...\oplus \mathbb{P}^{-1}M_{n}(\omega)=\mathbb{R}^{d}.$$
\end{theorem}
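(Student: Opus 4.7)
The plan is to adapt the argument of \cite[Theorem~3.3.6]{Call2014} to the weak attractor setting, using \propref{attractor corresponds to subspace} and \propref{direct sum decomp of attractor/repeller} as the principal structural inputs. Together these say that every weak attractor $A$ of $\mathbb{P}\Phi$ has the property that $\mathbb{P}^{-1}A(\omega)$ is almost surely a linear subspace of $\mathbb{R}^{d}$ of almost surely constant dimension, and that every weak attractor-repeller pair $(A,R)$ induces the direct sum decomposition $\mathbb{P}^{-1}A(\omega)\oplus\mathbb{P}^{-1}R(\omega)=\mathbb{R}^{d}$ almost surely.

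First I would bound the length of any chain of weak attractors. Given a chain $\emptyset=A_{0}\subsetneq A_{1}\subsetneq\dots\subsetneq A_{N}=\mathbb{P}^{d-1}$ with strict almost-sure inclusions, \propref{attractor corresponds to subspace} forces the almost-surely constant dimensions $d_{i}:=\dim\mathbb{P}^{-1}A_{i}(\omega)$ to satisfy $0=d_{0}<d_{1}<\dots<d_{N}=d$, so $N\leq d$. Hence a chain of maximal length exists, and its associated weak Morse decomposition $\{M_{1},\dots,M_{n}\}$ with $n\leq d$ is the candidate for the finest one.

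For uniqueness I would compare this decomposition to any other finest weak Morse decomposition $\{\tilde M_{1},\dots,\tilde M_{m}\}$ arising from a maximal chain $\{\tilde A_{j}\}_{j=0}^{m}$. The strategy is to show that the finite family of weak attractors generated by $\{A_{i}\}\cup\{\tilde A_{j}\}$ under intersection can be linearly ordered into a chain of weak attractors; by maximality of both original chains no strict refinement is possible, which forces $n=m$ and almost-sure equality of the Morse sets. The main obstacle is proving that intersections (and the corresponding unions) of weak attractors remain weak attractors in the random setting, since weak attraction is only formulated in probability rather than pathwise; this demands an almost-sure closure lemma whose deterministic analogue underlies \cite{Call2014}.

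The direct sum structure then follows by induction. Applying \propref{direct sum decomp of attractor/repeller} to the pair $(A_{i-1},R_{i-1})$ yields $\mathbb{P}^{-1}A_{i-1}(\omega)\oplus\mathbb{P}^{-1}R_{i-1}(\omega)=\mathbb{R}^{d}$ almost surely. Since $\mathbb{P}^{-1}A_{i}(\omega)$ is an almost-sure linear subspace containing $\mathbb{P}^{-1}A_{i-1}(\omega)$, the modular law gives
\[
\mathbb{P}^{-1}A_{i}(\omega)=\mathbb{P}^{-1}A_{i-1}(\omega)\oplus\bigl(\mathbb{P}^{-1}A_{i}(\omega)\cap\mathbb{P}^{-1}R_{i-1}(\omega)\bigr)=\mathbb{P}^{-1}A_{i-1}(\omega)\oplus\mathbb{P}^{-1}M_{i}(\omega).
\]
Iterating from $i=1$ to $i=n$, together with $A_{n}=\mathbb{P}^{d-1}$, then produces the claimed decomposition $\mathbb{R}^{d}=\bigoplus_{i=1}^{n}\mathbb{P}^{-1}M_{i}(\omega)$ almost surely.
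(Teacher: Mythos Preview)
Your plan—adapting \cite[Theorem~3.3.6]{Call2014} via \propref{attractor corresponds to subspace} and \propref{direct sum decomp of attractor/repeller}—is exactly the paper's approach: the paper omits the proof entirely and just states that it follows by ``very similar arguments as in \cite[Theorem~3.3.6]{Call2014}''. Your outline is already more detailed than what the paper provides, and the technical point you flag (closure of weak attractors under intersection) is not addressed separately there either.
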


We often require a bounded growth condition on the norms $\|\Phi(t,\omega)\|$ of the following form: for almost every $\omega\in \Omega$ it holds that $\|\Phi(t,\omega)\|\leq K(\omega)e^{a|t|}$ for some $a>0$, all $t\in\mathbb{T}$ and some tempered random variable $K:\Omega \to [1,\infty)$.

The following lemma tells us that we may assume that the bounded growth condition holds on a $\theta$-invariant full-measure set.
\begin{lemma}
\label{invariant set bg condition}
    Suppose there exists an $a>0$, a tempered (respectively essentially bounded)  random variable $K:\Omega\to [1,\infty)$ and a full-measure set $F\in \mathcal{F}$ such that for any $\omega \in \mathcal{F}$ one has that 
    \begin{align}
    \label{proof technical lemma finitness}
    \|\Phi(t,\omega)\|\leq K(\omega)e^{a|t|} \text{ for all } t\in\mathbb{T}.
    \end{align}
    Then, for any $\varepsilon>0$ there exists a $\theta$-invariant full-measure set $\tilde{F}\in \mathcal{F}$ such that \eqref{proof technical lemma finitness} holds with $a>0$ replaced by $a+\varepsilon>0$ for all $\omega\in \tilde{F}$ and some tempered (respectively essentially bounded) random variable $\tilde{K}:\Omega \to [1,\infty)$. Moreover for any $\omega\in \tilde{F}$, and any bounded sequence $(s_{n})_{n\in\mathbb{N}}$ one has $\limsup_{n\to\infty} \tilde{K}(\theta_{s_{n}}\omega)<\infty$.
\end{lemma}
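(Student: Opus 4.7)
The plan is to treat the discrete and continuous time cases separately, in each constructing $\tilde F$ as a $\theta$-invariant refinement of $F$ and $\tilde K$ through a supremum formula that encodes the growth bound by construction while inheriting the regularity of $K$.

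For $\mathbb{T}=\mathbb{Z}$ I would simply take $\tilde F:=\bigcap_{n\in\mathbb{Z}}\theta_n^{-1}F$, a countable intersection of full-measure sets that is manifestly $\theta$-invariant and measurable. Since every $\omega\in\tilde F$ belongs to $F$, I would define $\tilde K$ to equal $K$ on $\tilde F$ (and, say, $1$ elsewhere); this is measurable, coincides with $K$ on a full-measure set, hence inherits temperedness or essential boundedness, and the growth bound follows with no slack. Any bounded integer sequence takes finitely many values, so the final ``moreover'' clause reduces to a finite maximum.

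For $\mathbb{T}=\mathbb{R}$ the countable intersection is unavailable, so I would apply Fubini to the product null set $\{(s,\omega):\theta_s\omega\notin F\}$ to obtain the measurable, $\theta$-invariant, full-measure set $\tilde F:=\{\omega\in\Omega:\theta_s\omega\in F\text{ for Lebesgue-a.e.~}s\in\mathbb{R}\}$. Then I would set
\[
\tilde K(\omega):=\max\Big\{1,\;\sup_{t\in\mathbb{Q}}\|\Phi(t,\omega)\|e^{-(a+\varepsilon)|t|}\Big\},
\]
the supremum over the countable set $\mathbb{Q}$ ensuring measurability and coinciding with the sup over $\mathbb{R}$ by continuity of $t\mapsto\Phi(t,\omega)$. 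The growth bound with rate $a+\varepsilon$ then holds on $\tilde F$ by construction. Finiteness of $\tilde K$ is verified in two cases: for $\omega\in F$, the bound $\|\Phi(t,\omega)\|\le K(\omega)e^{a|t|}$ immediately gives $\tilde K(\omega)\le K(\omega)$; for $\omega\in\tilde F\setminus F$, I would pick $s\in[-1,1]$ with $\theta_s\omega\in F$ (available since $\{s:\theta_s\omega\in F\}\cap[-1,1]$ has positive Lebesgue measure) and use the cocycle identity $\Phi(t,\omega)=\Phi(t-s,\theta_s\omega)\Phi(s,\omega)$ together with the bound at $\theta_s\omega$ to conclude $\tilde K(\omega)\le e^aK(\theta_s\omega)\|\Phi(s,\omega)\|<\infty$. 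Temperedness (resp.\ essential boundedness) of $\tilde K$ then follows from the pointwise inequality $\tilde K\le K$ on $F$: the essentially-bounded case is immediate, and the tempered case is obtained by combining the inequality with Fubini (so that for a.e.~$\omega$, $\theta_t\omega\in F$ for Lebesgue-a.e.~$t$) and the $\varepsilon$-slack.

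The local-boundedness clause follows from a symmetric use of the cocycle. For $\omega\in\tilde F$ and a bounded sequence $(s_n)$ with $|s_n|\le M$, the identity $\Phi(r,\theta_{s_n}\omega)=\Phi(r+s_n,\omega)\Phi(s_n,\omega)^{-1}$ combined with the growth bound already established at $\omega\in\tilde F$ yields
\[
\tilde K(\theta_{s_n}\omega)\le\tilde K(\omega)\,\|\Phi(s_n,\omega)^{-1}\|\,e^{(a+\varepsilon)M},
\]
and continuity of $s\mapsto\Phi(s,\omega)^{-1}$ on $\mathbb{R}$ (matrix inversion is continuous on the invertibles) gives $\sup_{|s|\le M}\|\Phi(s,\omega)^{-1}\|<\infty$, so the right-hand side is uniformly finite and $\limsup_{n}\tilde K(\theta_{s_n}\omega)<\infty$. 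The main obstacle is the continuous-time temperedness: the inequality $\tilde K\le K$ transfers to the orbit $\theta_t\omega$ only for Lebesgue-a.e.~$t$, so one must exploit the $\varepsilon$-slack in the growth rate together with Arnold's characterisation of tempered random variables (see \cite[Section~4.1.1]{arnold1998random}) to absorb the possible sub-exponential growth of $\tilde K(\theta_t\omega)$ at the exceptional times.
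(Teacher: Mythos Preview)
Your approach differs genuinely from the paper's. The paper does not split cases; it builds a single $\theta$-invariant set
\[
\hat F=\Bigl\{\omega:\ \liminf_{t\to\pm\infty}\tfrac{1}{|t|}\ln\|\Phi(t,\omega)\|\ge -a\ \text{ and }\ \limsup_{t\to\pm\infty}\tfrac{1}{|t|}\ln\|\Phi(t,\omega)\|\le a\Bigr\},
\]
checks invariance via the cocycle identity $\Phi(t,\theta_s\omega)=\Phi(t+s,\omega)\Phi(s,\omega)^{-1}$, and then verifies that $\tilde K$ is real-valued, tempered, and locally bounded along orbits by contradiction against this asymptotic description, using rational-time perturbations $s_n'\in\mathbb{Q}$ so that $\theta_{s_n'}\omega\in F$. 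Your discrete-time reduction and your direct cocycle estimate for the ``moreover'' clause are cleaner than the paper's contradiction arguments, and your Fubini construction of $\tilde F$ is a natural alternative to $\hat F$.

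The one genuine gap is the continuous-time temperedness step. The pointwise inequality $\tilde K\le K$ on $F$ transfers to $\tilde K(\theta_t\omega)\le K(\theta_t\omega)$ only for Lebesgue-a.e.\ $t$, and your proposed repair via the local-boundedness estimate introduces the factor $\|\Phi(t-t',\theta_{t'}\omega)^{-1}\|$ when comparing a bad time $t$ to a nearby good time $t'$; a one-sided bound on $\|\Phi\|$ gives no control on $\|\Phi^{-1}\|$, so this route stalls, and ``$\varepsilon$-slack plus Arnold's characterisation'' does not obviously rescue it. What \emph{does} close the gap within your framework is the observation that
\[
t\longmapsto \tilde K(\theta_t\omega)=\sup_{r}\|\Phi(r+t,\omega)\Phi(t,\omega)^{-1}\|\,e^{-(a+\varepsilon)|r|}
\]
is lower semicontinuous (a supremum of functions continuous in $t$, by the very continuity of $t\mapsto\Phi(t,\omega)^{-1}$ you already invoke). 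Hence for every $t$ one has $\tilde K(\theta_t\omega)\le\liminf_{t'\to t,\,\theta_{t'}\omega\in F}K(\theta_{t'}\omega)$, and temperedness of $K$ then gives $\limsup_{t\to\pm\infty}\frac{1}{|t|}\ln\tilde K(\theta_t\omega)\le 0$ directly. With this addition your argument goes through; the paper sidesteps the issue by working instead with $\hat F$ and rational approximation.
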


\begin{proof}
Let $B\in \mathcal{F}$ be the $\theta$-invariant full-measure set where $\lim_{t\to \pm \infty} \frac{1}{|t|}\ln(K(\theta_{t}\omega))=0$ holds and consider the set $$\hat{F}:=\{\omega\in \Omega: \liminf_{t\to \pm \infty} \frac{1}{|t|} \ln\|\Phi(t,\omega)\|\geq -a, \limsup_{t\to \pm \infty} \frac{1}{|t|} \ln\|\Phi(t,\omega)\|\leq a\}.$$
Since one has for any $s,t\in \mathbb{T}$ and $\omega\in \Omega$ that $\Phi(t,\theta_{s}\omega)=\Phi(t+s,\omega)\Phi(s,\omega)^{-1}$ one can check that $\hat{F}$ is $\theta$-invariant. Moreover $\hat{F}$ has full-measure since it contains $\bigcap_{t\in\mathbb{T}\cap\mathbb{Q}}\theta_{t}(F)\cap B $. In order to see this first note that for $\omega\in \bigcap_{t\in\mathbb{T}\cap\mathbb{Q}}\theta_{t}(F)\cap B $ one has that $t\mapsto \|\Phi(-t,\theta_{t}\omega)\|$ is continuous as $\Phi(-t,\theta_{t}\omega)=\Phi(t,\omega)^{-1}$ and matrix inversion is continuous on $\text{GL}(d,\mathbb{R})$. Now take a $\omega\in \bigcap_{t\in\mathbb{T}\cap\mathbb{Q}}\theta_{t}(F)\cap B$, then as $\|\Phi(t,\omega)\|\leq K(\omega)e^{a|t|}$ for any $t\in \mathbb{T}$ it follows that one must have $\limsup_{t\to \pm \infty} \frac{1}{|t|} \ln\|\Phi(t,\omega)\|\leq a $. Let now $(t_{n})_{n\in\mathbb{N}}$ with $\lim_{n\to\infty}t_{n} \in \{-\infty,\infty\}$, then for any $\varepsilon>0$ there exists a sequence $(t_{n}^{\prime})_{n\in\mathbb{N}}\subset \mathbb{Q}\cap \mathbb{T}$ such that $\lim_{n\to\infty}(t_{n}-t_{n}^{\prime})=0$ and $\|\Phi(-t_{n}^{\prime},\theta_{t_{n}^{\prime}}\omega)\|+\varepsilon \geq  \|\Phi(-t_{n},\theta_{t_{n}}\omega)\|$ for any $n\in \mathbb{N}$. It follows  that :
$$\|\Phi(t_{n},\omega)\|\geq \frac{1}{\|\Phi(-t_{n},\theta_{t_{n}}\omega)\|}\geq \frac{1}{\|\Phi(-t_{n}^{\prime},\theta_{t_{n}^{\prime}}\omega)\|+\varepsilon}\geq \frac{1}{K(\theta_{t_{n}^{\prime}}\omega)e^{a|t_{n}^{\prime}|}+\varepsilon}. $$
This implies that $\liminf_{t\to \pm \infty} \frac{1}{|t|} \ln\|\Phi(t,\omega)\|\geq -a $.
\\
Consider the $\theta$-invariant full-measure set $\tilde{F}:=\hat{F}\cap B$
and define for an arbitrary $\varepsilon>0$ the random variable $\tilde{K}:\Omega \to [1,\infty]$ given by $K(\omega)=1$ for $\omega\in \Omega\setminus \tilde{F}$
and for $\omega\in \tilde{F}$:
$$ \tilde{K}(\omega )=  \inf \{K\geq 1: \forall t\in \mathbb{T}: \ \|\Phi(t,\omega)\|\leq Ke^{(a+\varepsilon) |t|}\}. $$
Since $\mathcal{F}$ is complete and $t\mapsto \|\Phi(t,\omega)\|$ is continuous, it is easy to see that $\tilde{K}$ is indeed a random variable. We now demonstrate that for any $\varepsilon>0$, $\tilde{K}$ is real-valued and tempered, this will then conclude the proof. Suppose first that $\tilde{K}$ is not real-valued for some $\omega\in \tilde{F}$, then there exists a sequence of times $(t_{n})_{n\in\mathbb{N}}\subset \mathbb{T}$ such that for any $n\in\mathbb{N}$ one has that $\|\Phi(t_{n},\omega)\|>ne^{(a+\varepsilon)|t_{n}|}$. This immediately yields a contradiction to the definition of $\tilde{F}$. Now suppose that $\tilde{K}$ is not tempered, then there exists a $\omega\in \tilde{F}\cap \bigcap_{t\in\mathbb{Q}\cap \mathbb{T}}\theta_{t}(F$) such that $\limsup_{t\to \pm \infty}\frac{1}{|t|}\ln (\tilde{K}(\theta_{t}\omega))=\infty$. Let $s_{n}\uparrow \infty$ be a sequence where the $\limsup$ is attained. Then by definition of $\tilde{K}$, for any $n\in\mathbb{N}$ there exists a $t_{n}\in \mathbb{T} $ such that $\|\Phi(t_{n},\theta_{s_{n}}\omega)\|>\frac{1}{2}\tilde{K}(\theta_{s_{n}}\omega)e^{(a+\varepsilon)|t_{n}|}$. By continuity we may select a sequence of times $(s_{n}^{\prime})_{n\in\mathbb{N}}\subset \mathbb{Q}\cap \mathbb{T}$ such that $\lim_{n\to\infty} \frac{s_{n}^{\prime}}{s_{n}}=1$ and $\|\Phi(t_{n},\theta_{s_{n}^{\prime}}\omega)\|>\frac{1}{2}\tilde{K}(\theta_{s_{n}}\omega)e^{(a+\varepsilon)|t_{n}|}$. This in turn implies that one has $K(\theta_{s_{n}^{\prime}})e^{a|t_{n}|}>\frac{1}{2}\tilde{K}(\theta_{s_{n}}\omega)e^{(a+\varepsilon)|t_{n}|}$ for every $n\in\mathbb{N}$, but on the one hand one has that 
$\limsup_{n\to\infty} \frac{1}{s_{n}}\ln(K(\theta_{s_{n}^{\prime}}\omega))=\limsup_{n\to\infty} \frac{1}{s_{n}^{\prime}}\ln(K(\theta_{s_{n}^{\prime}}\omega))=0$ and on the other hand $\liminf_{n\to\infty} \frac{1}{s_{n}}\ln(\tilde{K}(\theta_{s_{n}}\omega))=\infty$. This is a contradiction and hence demonstrates temperedness of $\tilde{K}$. It remains to show that for any $\omega\in \tilde{F}$, and any bounded sequence $(s_{n})_{n\in\mathbb{N}}\subset \mathbb{T}$ one has that $\limsup_{n\to\infty} \tilde{K}(\theta_{s_{n}}\omega)<\infty$. To that aim, let $\omega\in \mathcal{F}$, $(s_{n})_{n\in\mathbb{N}}\subset \mathbb{T}$ be bounded, and suppose that $(\tilde{K}(\theta_{s_{n}}))_{n\in \mathbb{N}}$ is unbounded and strictly increasing. Then there exists a sequence $(t_{n})_{n\in \mathbb{N}}$ with $\|\Phi(t_{n},\theta_{s_{n}}\omega)\|>\frac{1}{2}\tilde{K}(\theta_{s_{n}}\omega)e^{(a+\varepsilon)|t_{n}|}$. This implies that $\limsup_{n\to\infty} |t_{n}|=\infty$. We may pass now  (without relabeling) to a subsequence with $\lim_{n\to\infty}t_{n}\in \{-\infty,\infty \}$. Since we in particular have that $\|\Phi(t_{n}+s_{n},\omega)\|\cdot \|\Phi(-s_{n},\theta_{s_{n}}\omega)\|>\frac{1}{2}\tilde{K}(\theta_{s_{n}}\omega)e^{(a+\varepsilon)|t_{n}|}$ and $(s_{n})_{n\in\mathbb{N}}$ converges to some element $s\in \mathbb{T}$ (after passing to an appropriate subsequence), this implies that $\liminf_{n\to\infty} \frac{1}{|s_{n}+t_{n}|}\ln \|\Phi(t_{n}+s_{n},\omega)\|\geq a+\varepsilon$, which is a contradiction to $\omega\in \hat{F}$.

If $K$ is essentially bounded one sees by a similar argumentation that $\tilde{K}$ needs to be essentially bounded as well.
\end{proof}

We now define the Morse spectrum following \cite[Section~3.4]{Call2014}.
\begin{definition}
    Let $M\subset\mathbb{P}^{d-1}$ be an invariant non-trivial compact random set, such that $\mathbb{P}^{-1}M(\omega)$ is a linear subspace of $\mathbb{R}^{d}$ almost surely and define:
    \begin{align*}
\Xi(\mathbb{P}^{-1}M)(\omega) := & \{ \xi \in \overline{\mathbb{R}} :  \text{there exists a sequence } \{(T_k, t_k, x_k)\}_{k \in \mathbb{N}} \text{ with } T_k, t_k \in \mathbb{T}, \\& \ |T_{k}|\geq |t_{k}|,  x_k \in \mathbb{P}^{-1} M(\theta_{t_k}\omega)\setminus\{0\} \nonumber 
 \text{ such that } \lim_{k \to \infty} T_k = \infty \\& \text{ and } \lim_{k \to \infty} \lambda^{T_k}(\theta_{t_k}\omega, x_k) = \xi\},
\end{align*}
where $\lambda^T(\omega,x)$ is the finite-time Lyapunov exponent defined by
\[
\lambda^T(\omega,x) := \frac{1}{T} \ln \frac{\|\Phi(T, \omega)x\|}{\|x\|}.
\]
It is convenient to define 
\[
\tilde{\lambda}(T,t,\omega,x) := \frac{1}{T} \ln \frac{\|\Phi(T+t, \omega)x\|}{\|\Phi(t,\omega)x\|}.
\].

Finally given a unique finest weak Morse decomposition $\{M_{1},\dots,M_{n}\}$, $n\leq d$, we define the Morse spectrum to be
$$\Xi_{w}(\omega)=\bigcup_{i=1}^{n}\Xi(\mathbb{P}^{-1}M_{i})(\omega).$$ 
\end{definition}
\begin{remark}
There are several reasons why we demand that $|T_{k}|\geq |t_{k}|$ for the sequences $(T_{k},t_{k})_{n\in \mathbb{N}}$ in the definition of the Morse spectrum.\begin{itemize}
\item[(i)] If we assume that the bounded growth condition \eqref{proof technical lemma finitness} holds, one obtains that $\frac{1}{T_{k}} \ln \|\Phi(T_{k},\theta_{t_{k}}\omega)\|\leq \frac{1}{T_{k}}\ln K(\theta_{t_{k}}\omega)+a$. In order for this bound to be meaningful we would need that $(\frac{1}{T_{k}} \ln K(\theta_{t_{k}}\omega))_{k\in \mathbb{N}}$ does not grow to infinity; if we allow for arbitrary sequences $(t_{k})_{k\in\mathbb{N}}$ this can not be guaranteed, however, if $|T_{k}|\geq |t_{k}|$ one would have that $(\frac{1}{T_{k}} \ln K(\theta_{t_{k}}\omega))_{k\in \mathbb{N}}$ tends to zero.
\item[(ii)] A property one would like to have is that $\Xi_{w}$ is invariant under tempered coordinate change. This is not the case if we allow for arbitrary sequences $(t_{k})_{k\in \mathbb{N}}$. An example will be given at the end of this subchapter (cf. Example \ref{example motivation for alternativ def morse}).
\item[(iii)] It will turn out that this set-up offers a constructive alternative to the non-uniform dichotomy spectrum (c.f. Theorem \ref{morse spectrum equals dichotomy spectrum}). 
\end{itemize}
\end{remark}
\begin{remark}
    If we assume that $\Phi$ fulfills the Oseledet integrability conditions, one has that the Lyapunov spectrum  $\Lambda$ is contained in $\Xi_{w}$. This follows immediately from the fact that the projected Oseledet subspaces define a weak Morse decomposition (cf.~\cite[Theorem 6.1]{crauel2004towards}).
\end{remark}

The following two technical lemmas will be useful. The first one is taken from \cite[Corollary 4.7]{viana2014lectures}, and the second one comes from \cite[Corollary 2.13]{crauel2002random}.
\begin{lemma}
\label{random sets under preimage}
    Let $M(\omega)\subset \mathbb{P}^{d-1}$ be a random set. Then $\mathbb{P}^{-1}M(\omega)$ is a random set as well.
\end{lemma}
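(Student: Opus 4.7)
The plan is to reduce the claim to measurability of the continuous projection map. I would first introduce the map $\pi:\mathbb{R}^{d}\setminus \{0\}\to \mathbb{P}^{d-1}$, $x\mapsto \mathbb{P}x$, and verify that it is continuous with respect to the metric $d_{\mathbb{P}}$. This is immediate from the definition in \eqref{metric on projected space}: if $x_{n}\to x$ in $\mathbb{R}^{d}\setminus \{0\}$, then $x_{n}/\|x_{n}\|\to x/\|x\|$, and hence $d_{\mathbb{P}}(\mathbb{P}x_{n},\mathbb{P}x)\to 0$. In particular $\pi$ is Borel measurable.

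Next I would lift this to the product space. Define $\Pi:(\mathbb{R}^{d}\setminus \{0\})\times \Omega\to \mathbb{P}^{d-1}\times \Omega$ by $\Pi(x,\omega):=(\pi(x),\omega)$. For any measurable rectangle $B\times F$ with $B\in \mathcal{B}(\mathbb{P}^{d-1})$ and $F\in \mathcal{F}$, the preimage $\Pi^{-1}(B\times F)=\pi^{-1}(B)\times F$ lies in $\mathcal{B}(\mathbb{R}^{d}\setminus \{0\})\otimes \mathcal{F}$. Since such rectangles generate $\mathcal{B}(\mathbb{P}^{d-1})\otimes \mathcal{F}$, the map $\Pi$ is jointly measurable. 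Consequently, given a random set $M\in \mathcal{B}(\mathbb{P}^{d-1})\otimes \mathcal{F}$, the preimage $\Pi^{-1}(M)$ belongs to $\mathcal{B}(\mathbb{R}^{d}\setminus \{0\})\otimes \mathcal{F}\subset \mathcal{B}(\mathbb{R}^{d})\otimes \mathcal{F}$. A fibrewise check gives $\Pi^{-1}(M)(\omega)=\{x\in \mathbb{R}^{d}\setminus \{0\}:\mathbb{P}x\in M(\omega)\}$, which is exactly $\mathbb{P}^{-1}M(\omega)\setminus \{0\}$. If one adopts the convention (natural when $\mathbb{P}^{-1}M(\omega)$ is meant to be a linear subspace) that $0\in \mathbb{P}^{-1}M(\omega)$ for every $\omega$, then $\mathbb{P}^{-1}M=\Pi^{-1}(M)\cup (\{0\}\times \Omega)$, and this remains a member of $\mathcal{B}(\mathbb{R}^{d})\otimes \mathcal{F}$.

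The argument is essentially a formality once continuity of $\pi$ is recorded; the only substantive step is verifying that normalisation $x\mapsto x/\|x\|$ is continuous on $\mathbb{R}^{d}\setminus \{0\}$, from which continuity of $\pi$ and then joint measurability of $\Pi$ follow. No further obstacle is expected, since including or excluding the origin is a matter of convention and does not affect measurability.
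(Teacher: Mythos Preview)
Your argument is correct. The paper does not give its own proof of this lemma; it simply records it as ``taken from \cite[Corollary~4.7]{viana2014lectures}'' and moves on. Your self-contained argument via continuity of the projection $\pi:\mathbb{R}^{d}\setminus\{0\}\to\mathbb{P}^{d-1}$ and joint measurability of $\Pi=(\pi,\id_\Omega)$ is exactly the standard way to establish such a statement and requires no external reference. The handling of the origin is also fine: since $\{0\}\times\Omega\in\mathcal{B}(\mathbb{R}^{d})\otimes\mathcal{F}$, adjoining it preserves measurability.
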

\begin{lemma}
    \label{measurability lemma randomprobmas}
    Let $(\Omega,\mathcal{F},\mu)$ be a complete probability space, $X$ be a Polish space (equipped with its Borel $\sigma$-Algebra) and $f:\Omega\times X\to \mathbb{R}$ be measurable. Then if $C(\omega)\subset X$ is a random set one has that $$\Omega \to \mathbb{R}, \omega\mapsto \sup_{x\in C(\omega)}f(x,\omega)$$ is measurable.
\end{lemma}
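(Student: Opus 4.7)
The plan is to verify measurability of $g(\omega) := \sup_{x \in C(\omega)} f(x, \omega)$ by showing that each super-level set $\{\omega \in \Omega : g(\omega) > a\}$ lies in $\mathcal{F}$ for every $a \in \mathbb{R}$. Varying $a$ over $\mathbb{Q}$ is then enough since the sets $(a, \infty]$ generate the Borel $\sigma$-algebra of $\overline{\mathbb{R}}$.

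The key step is the identity
\begin{equation*}
\{\omega \in \Omega : g(\omega) > a\} = \pi_\Omega\bigl(\tilde{C} \cap \{(\omega,x) \in \Omega \times X : f(x,\omega) > a\}\bigr),
\end{equation*}
where $\pi_\Omega : \Omega \times X \to \Omega$ is the canonical projection and $\tilde{C} := \{(\omega,x) : x \in C(\omega)\}$. By hypothesis $C$ is a random set, i.e.\ measurable in $\mathcal{B}(X) \otimes \mathcal{F}$, and the coordinate swap identifies this with $\tilde{C} \in \mathcal{F} \otimes \mathcal{B}(X)$. The second set in the intersection lies in $\mathcal{F} \otimes \mathcal{B}(X)$ since $f$ is jointly measurable, so the intersection does as well. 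I would then invoke the measurable projection theorem, which under completeness of $(\Omega, \mathcal{F}, \mu)$ and Polishness of $X$ ensures that the $\pi_\Omega$-projection of any set in $\mathcal{F} \otimes \mathcal{B}(X)$ is $\mathcal{F}$-measurable. This delivers the required measurability of the super-level set.

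The main obstacle is that the measurable projection theorem is a non-elementary result from descriptive set theory, resting on the theory of analytic sets and universal measurability, and its applicability hinges on both standing hypotheses: completeness of the probability space and Polishness of the state space. A more elementary route would be available if one additionally assumed $C$ to be closed-valued, in which case one could extract a Castaing representation $C(\omega) = \overline{\{c_n(\omega) : n \in \mathbb{N}\}}$ via a countable family of measurable selectors, so that
\begin{equation*}
g(\omega) = \sup_{n \in \mathbb{N}} f(c_n(\omega), \omega)
\end{equation*}
is a countable supremum of compositions of measurable functions and hence measurable. Since the lemma as stated makes no closure assumption on $C$, however, the projection-theoretic argument appears to be the natural one.
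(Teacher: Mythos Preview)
Your argument is correct: the identity for the super-level set is valid, the intersection lies in $\mathcal{F}\otimes\mathcal{B}(X)$ by the standing hypotheses, and the measurable projection theorem (which indeed requires completeness of $(\Omega,\mathcal{F},\mu)$ and a standard Borel structure on $X$) yields the conclusion. Note that the paper does not supply its own proof of this lemma; it simply cites \cite[Corollary~2.13]{crauel2002random}, and the projection-theorem route you outline is in fact the standard argument used there, so there is nothing substantively different to compare.
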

The following theorem describes the basic properties of $\Xi$; the proof is an adaptation from the non-autonomous case in \cite[Theorem 5.3]{rasmussen2007alternative}.
\begin{theorem}
\label{basic props of morse}
    Suppose that $\mathbb{T}=\mathbb{R}$, then given an invariant random non-trivial compact set $M\subset \mathbb{P}^{d-1} $ such that $\mathbb{P}^{-1}M(\omega)$ is almost surely a linear subspace of $\mathbb{R}^{d}$ it holds that $\Xi(\mathbb{P}^{-1}M)(\omega)=[a,b]$ almost surely with $-\infty\leq a\leq b\leq \infty$. Furthermore, these statements remain valid for $\mathbb{T}=\mathbb{Z}$ under the assumption that \eqref{proof technical lemma finitness} holds.
\end{theorem}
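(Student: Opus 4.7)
The statement decomposes into three sub-claims: (i) $\Xi(\mathbb{P}^{-1}M)(\omega)$ is $\mu$-almost surely constant in $\omega$; (ii) it is closed in $\overline{\mathbb{R}}$; and (iii) it is connected, hence an interval. The third point is the real obstacle; the first two are essentially bookkeeping.

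For (i) and (ii), the arguments are relatively standard. Constancy follows by observing that a witness $(T_k,t_k,x_k)$ for $\xi\in\Xi(\mathbb{P}^{-1}M)(\omega)$ yields a witness $(T_k,t_k-s,x_k)$ for $\xi\in\Xi(\mathbb{P}^{-1}M)(\theta_s\omega)$ for every $s\in\mathbb{T}$, with the constraint $|T_k|\ge|t_k-s|$ holding eventually because $T_k\to\infty$; so $\Xi(\mathbb{P}^{-1}M)(\cdot)$ is $\theta$-invariant modulo null sets. Combined with measurability of $\inf\Xi$ and $\sup\Xi$ via Lemmas~\ref{random sets under preimage} and~\ref{measurability lemma randomprobmas} and ergodicity of $\theta$, this forces almost sure constancy. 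Closedness is a diagonal extraction: given $\xi_n\to\xi$ with $\xi_n\in\Xi(\mathbb{P}^{-1}M)(\omega)$, for each $n$ pick a witness $(T_{k(n)}^n,t_{k(n)}^n,x_{k(n)}^n)$ satisfying $T_{k(n)}^n>n$, $|T_{k(n)}^n|\ge|t_{k(n)}^n|$, and whose finite-time exponent lies within $1/n$ of $\xi_n$.

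For (iii), let $\xi_1<\xi<\xi_2$ lie in $\Xi$ with witnesses $(S_k,s_k,u_k)$ and $(T_k,t_k,v_k)$ respectively. The plan is to realise $\xi$ by concatenating an initial segment with exponent close to $\xi_1$ and a subsequent segment with exponent close to $\xi_2$ using the cocycle identity
\[
\lambda^{S+T}(\theta_t\omega,x)=\frac{S}{S+T}\lambda^S(\theta_t\omega,x)+\frac{T}{S+T}\tilde{\lambda}(T,S,\theta_t\omega,x),
\]
with the ratio $S/(S+T)$ tuned to $(\xi_2-\xi)/(\xi_2-\xi_1)$. Invariance of $M$ under $\mathbb{P}\Phi$ ensures that the endpoint of the initial segment remains in $\mathbb{P}^{-1}M$, providing a valid starting point for the subsequent segment; in addition, the linearity of each fibre $\mathbb{P}^{-1}M(\omega)$ (\propref{attractor corresponds to subspace}) permits an in-fibre linear interpolation $\alpha\mapsto\alpha u+(1-\alpha)w$ along which $\lambda^T$ varies continuously. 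For $\mathbb{T}=\mathbb{R}$ an intermediate-value adjustment, applied either to the total length or to the convex parameter $\alpha$, gives exponent exactly $\xi$; for $\mathbb{T}=\mathbb{Z}$ the bounded-growth hypothesis~\eqref{proof technical lemma finitness} forces the discrete jumps $|\lambda^{T+1}-\lambda^T|$ to tend to zero uniformly along sequences satisfying $|T|\ge|t|$ (this is where the lemma preceding the theorem, guaranteeing $\limsup_n\tilde K(\theta_{s_n}\omega)<\infty$ for bounded $(s_n)$, is used), yielding an approximate intermediate-value property sufficient to reach any value within $1/k$ of $\xi$.

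The main obstacle is assembling, from the \emph{a priori} unrelated witnesses for $\xi_1$ and $\xi_2$, a single concatenated trajectory lying in $\mathbb{P}^{-1}M$ whose exponent is close to $\xi$: the end of the $\xi_1$-segment is determined by the starting data and is not freely chosen, so one cannot directly prescribe the exponent accumulated on the subsequent segment. I expect this to be resolved by combining the cocycle identity and invariance of $M$ with in-fibre linear interpolation, together with a diagonal extraction enforcing the required length ratios and the prescribed closeness to $\xi_1$ and $\xi_2$ simultaneously; the constraint $|T_k|\ge|t_k|$ is crucial to keep the $t_k$-shift negligible relative to the length $T_k$ over which each exponent is measured, so that error terms introduced by the bounded growth estimate do not swamp the target value.
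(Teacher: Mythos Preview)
Your treatment of (i) and (ii) is fine and matches the paper. The gap is in (iii).

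The concatenation scheme you propose does not close. As you yourself note, once you run the $\xi_1$-segment from $(\theta_{t}\omega,x)$ for time $S$, the endpoint $\Phi(S,\theta_t\omega)x$ is forced, and you have no control over the exponent $\tilde\lambda(T,S,\theta_t\omega,x)$ accumulated on the next stretch: there is no reason it should be anywhere near $\xi_2$. Your suggested fix---interpolating in the fibre between this forced endpoint and a point that \emph{would} give exponent near $\xi_2$---is incompatible with the cocycle identity you want to exploit, because changing the point at the junction breaks the concatenation. So the two ingredients you list (cocycle splitting and in-fibre interpolation) do not combine into a single admissible trajectory whose exponent you can steer to $\xi$. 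The paragraph beginning ``I expect this to be resolved'' is a statement of hope, not an argument.

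The paper avoids concatenation altogether. The key move is to rewrite $\Xi(\mathbb{P}^{-1}M)(\omega)$ using the alternative finite-time exponent
\[
\tilde\lambda(T,t,\omega,x)=\frac{1}{T}\ln\frac{\|\Phi(T+t,\omega)x\|}{\|\Phi(t,\omega)x\|},
\]
where now $x$ ranges over the \emph{fixed} subspace $\mathbb{P}^{-1}M(\omega)$ rather than over $\mathbb{P}^{-1}M(\theta_t\omega)$ (this is legitimate by invariance of $M$). Both witnesses $x_k^a,x_k^b$ then live in the same linear space, and one interpolates \emph{all three} parameters simultaneously:
\[
T_k^c=cT_k^a+(1-c)T_k^b,\quad t_k^c=ct_k^a+(1-c)t_k^b,\quad x_k^c=cx_k^a+(1-c)x_k^b,
\]
applying the intermediate value theorem to the continuous map $c\mapsto\tilde\lambda(T_k^c,t_k^c,\omega,x_k^c)$ on $[0,1]$. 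No cocycle splitting is needed. For $\mathbb{T}=\mathbb{Z}$ the paper extends $T\mapsto\|\Phi(T,\omega)x\|$ to real $T$ by linear interpolation, runs the same IVT argument, and then uses the bounded-growth condition \eqref{proof technical lemma finitness} together with temperedness of $K$ to show that passing from the resulting real $(\tilde T_k,\tilde t_k)$ to $(\lfloor\tilde T_k\rfloor,\lfloor\tilde t_k\rfloor)$ does not change the limit.
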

\begin{proof}
    The proof is divided into two steps: we first show that $\Xi(\mathbb{P}^{-1}M)(\omega)$ is a non-empty closed interval in $\overline{\mathbb{R}}$, and second, we use ergodicity to conclude that $\Xi(\mathbb{P}^{-1}M)(\omega)$ is almost surely constant. In the first step we treat the cases $\mathbb{T}=\mathbb{Z}$ and $\mathbb{T}=\mathbb{R}$ separately.
\\
\emph{Step 1:} It follows immediately from the definition via the limits that $\mathbb{P}^{-1}M(\omega)$ is non-empty and closed for any $\omega\in \Omega$ such that $\mathbb{P}^{-1}M(\omega)\neq \{0\}$. We show now that  $\Xi(\mathbb{P}^{-1}M)(\omega)$ is an interval almost surely. First note that since $\Phi(T,\theta_{t}\omega)=\Phi(T+t,\omega)\Phi(-t,\theta_{t}\omega)$ for any $T,t\in\mathbb{T}$ and $\omega\in \Omega$ we may write 
 \begin{align*}
\Xi(\mathbb{P}^{-1}M)(\omega)  :=&  \{\xi \in \overline{\mathbb{R}} :  \text{there exists a sequence } \{(T_k, t_k, x_k)\}_{k \in \mathbb{N}} \text{ with } T_k, t_k \in \mathbb{T}, \\& \, x_k \in \mathbb{P}^{-1} M(\omega)\setminus\{0\} \nonumber 
 \text{ such that } \lim_{k \to \infty} T_k = \infty, |T_{k}|\geq |t_{k}| \\& \text{ and } \lim_{k \to \infty} \tilde{\lambda}(T_{k},t_{k},\omega),x_{k}) = \xi\}.
\end{align*}

 let $F\in \mathcal{F}$ be the full-measure set where $M$ is invariant and let $\omega\in F$ and $a,b\in \Xi(\mathbb{P}^{-1}M)(\omega)\cap \mathbb{R}$ with $a<b$. Then there exist sequences $((T_{k}^{j},t_{k}^{j},x_{k}^{j})\subset (\mathbb{T},\mathbb{T},\mathbb{P}^{-1}M(\omega)\setminus \{0\})$ with 
  $|T_{k}^{j}|\geq|t_{k}^{j}|$ for $j\in \{a,b\}$ such that $$j=\lim_{k\to\infty}\tilde{\lambda}(T_{k}^{j},t_{k}^{j},\omega,x_{k}^{j}).$$
  We write $y_{k}^{j}=\Phi(t_{k}^{j},\omega)x_{k}^{j}$ for $j\in \{a,b\}$.
 Now let $s\in (a,b)$ and assume w.l.o.g  that for any $k\in\mathbb{N}$ it holds that $\tilde{\lambda}(T_{k}^{a},t_{k}^{a},\omega,x_{k}^{a})<s<\tilde{\lambda}(T_{k}^{b},t_{k}^{b},\omega,x_{k}^{b})$.
\\
\emph{Case 1: $\mathbb{T}=\mathbb{R}$}
\\
For $c\in [0,1]$ we set $$t_{k}^{c}=ct_{k}^{a}+(1-c)t_{k}^{b}, T_{k}^{c}=cT_{k}^{a}+(1-c)T_{k}^{b} \text{ and }x_{k}^{c}=cx_{k}^{a}+(1-c)x_{k}^{b}.$$ (Note that $|T_{k}^{c}|\geq |t_{k}^{c}|)$. Finally define 
$$H_{k}:[0,1]\to \mathbb{R}, c\mapsto \frac{1}{T_{k}^{c}} \ln \frac{\|\Phi(T_{k}^{c}+t_{k}^{c},\omega)x_{k}^{c}\|}{\|\Phi(t_{k}^{c},\omega)x_{k}^{c}\|}=\tilde{\lambda}(T_{k}^{c},t_{k}^{c},\omega,x_{k}^{c}).$$
It is easily checked that $H_{k}$ is continuous. Now since $H_{k}(0)<s$ and $H_{k}(1)>s$ there exists $c_{k}\in (0,1)$ with $H_{k}(c_{k})=s$ and so by setting $\tilde{t}_{k}=t_{k}^{c_{k}}$, $\tilde{T}_{k}=T_{k}^{c_{k}}$ and $\tilde{x}_{k}=x_{k}^{c_{k}}$ we obtain a sequence with the desired property.
\\
\emph{Case 2: $\mathbb{T}=\mathbb{Z}$}:
\\
For a fixed $\omega\in \Omega$ we consider the map 
$$\Phi_{\omega}:\mathbb{T}\times \mathbb{R}^{d} \to \mathbb{R}, \ (T,x)\mapsto \|\Phi(T,\omega)x\|.$$
We want to first extend this map continuously to $\mathbb{R}\times \mathbb{R}^{d}$. To that aim we set for any $T\in\mathbb{R}$ and $x\in\mathbb{R}^{d}$:
$$\Phi_{\omega}(T,x):=(1+\floor*{T}-T)\Phi_{\omega}(\floor*{T},x)+(T-\floor*{T})\Phi_{\omega}(\ceil*{T},x).$$
It is easily checked that this defines a continuous extension. Now, 
arguing similarly as before one obtains sequences $(\tilde{t}_{k})_{k\in\mathbb{N}}\subset \mathbb{R}$, $(\tilde{T}_{k})_{k\in\mathbb{N}}\subset \mathbb{R}$ and $(\tilde{x}_{k})_{k\in\mathbb{N}}\subset \mathbb{P}^{-1}M(\omega)\setminus \{0\}$ such that 
$$|\tilde{T}_{k}|\geq |\tilde{t}_{k}| \text{ and }\frac{1}{\tilde{T}_{k}} \ln \frac{\Phi_{\omega}(\tilde{T}_{k}+\tilde{t}_{k},\tilde{x}_{k})}{\Phi_{\omega}(\tilde{t}_{k},\tilde{x}_{k})}=s \text{ for any } k\in\mathbb{N}.$$ Now for any $(T,x)\in \mathbb{R}\times \mathbb{R}^{d}$, condition \eqref{bg in Prop finitness} yields the estimations 
$$ \frac{\Phi_{\omega}(T,x)}{e^{a}K(\theta_{\floor*{T}}\omega)}\leq \Phi_{\omega}(\floor*{T},x)\leq e^{a}K(\theta_{\ceil*{T}}\omega)\Phi_{\omega}(T,x).$$
This, and temperedness of $K$ imply that 
$$\lim_{k\to\infty} \frac{1}{\floor*{\tilde{T}_{k}}} \ln \frac{\Phi_{\omega}(\floor*{\tilde{T}_{k}}+\floor*{\tilde{t}_{k}},\tilde{x}_{k})}{\Phi_{\omega}(\floor*{\tilde{t}_{k}},\tilde{x}_{k})}=s.$$
\\
\emph{Step 2:}
It is easy to see that for any $\omega\in \Omega$ and $s\in\mathbb{T}$ one has that $\Xi(\mathbb{P}^{-1}M)(\omega)=\Xi(\mathbb{P}^{-1}M)(\theta_{s}\omega)$.
Now, using the fact that $\Xi(\mathbb{P}^{-1}M)(\omega)$ is almost surely a closed interval, it suffices by ergodicity to show that the maps $$\eta(\omega):=\sup \Xi(\mathbb{P}^{-1}M)(\omega)  \text{ and }\alpha(\omega):=\inf \Xi(\mathbb{P}^{-1}M)(\omega) $$ are measurable. One sees by using the alternative definition of the Morse spectrum that 
$$\eta(\omega)=\max_{\tilde{t}\in \{-\infty,0,\infty\}} \ \limsup_{T\to\infty, \ t\to \tilde{t}, \ |T|\geq|t|} \ \ \sup_{x\in \mathbb{P}^{-1}M(\omega)\cap \mathbb{S}^{d-1}} \frac{1}{T}\ln\frac{\|\Phi(T+t,\omega)x\|}{\|\Phi(t,\omega)x\|}.$$
Now since the map $$\mathbb{T}\times \mathbb{T}\times (\mathbb{P}^{-1}M(\omega)\cap \mathbb{S}^{d-1})\to \mathbb{R}, \ (T,t,x)\mapsto \frac{1}{T}\ln\frac{\|\Phi(T+t,\omega)x\|}{\|\Phi(t,\omega)x\|} $$
is continuous for any $\omega\in \Omega$, and $\mathbb{P}^{-1}M(\omega)\cap \mathbb{S}^{d-1}$ is compact almost surely, it follows that the map
$$\mathbb{T}\times \mathbb{T}\to \mathbb{R}, \ (T,t)\mapsto \sup_{x\in \mathbb{P}^{-1}M(\omega)\cap \mathbb{S}^{d-1}}\frac{1}{T}\ln\frac{\|\Phi(T+t,\omega)x\|}{\|\Phi(t,\omega)x\|}. $$
is continuous almost surely. Moreover by Lemma \ref{measurability lemma randomprobmas} and Lemma \ref{random sets under preimage} we have that
for any fixed $T,t\in \mathbb{T}$ that the map
$$\Omega \to \mathbb{R}, \ \omega\mapsto \sup_{x\in \mathbb{P}^{-1}M(\omega)\cap \mathbb{S}^{d-1}}\frac{1}{T}\ln\frac{\|\Phi(T+t,\omega)x\|}{\|\Phi(t,\omega)x\|} $$
is measurable. The measurability of $\eta$ follows now from standard measurability arguments. We can apply analogous arguments to show that $\alpha$ is measurable.
\end{proof}

Now we state and prove a necessary and sufficient condition for $\Xi$ to be finite.

\begin{proposition}
    \label{Proposition finitness}
     $\Xi(\mathbb{R}^{d})(\omega)$ is finite almost surely  if and only if 
    there exists a $\theta$-invariant full-measure set $F\in\mathcal{F}$, a tempered random variable $K:\Omega \to [1,\infty)$ and an $\alpha>0$ such that for any $\omega\in F$
    \begin{align}
    \label{bg in Prop finitness}
    \|\Phi(t,\omega)\|\leq K(\omega)e^{\alpha|t|} \text{ for all }t\in\mathbb{T}.
    \end{align}
\end{proposition}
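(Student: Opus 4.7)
The plan is to prove the two implications separately. The forward direction (bounded growth implies $\Xi(\mathbb{R}^d)(\omega)$ finite) is a direct computation based on Lemma \ref{invariant set bg condition}; the reverse direction is subtler and requires extracting a tempered upper bound from the uniform-in-shift finite-time control implicit in $\Xi$ being bounded.

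For the ``if'' direction I would first invoke Lemma \ref{invariant set bg condition} to replace $F$ and $K$ by a $\theta$-invariant full-measure set $\tilde{F}$ and a tempered random variable $\tilde{K}$ which is in addition bounded along every bounded orbit $(\theta_{s_n}\omega)_{n\in\mathbb{N}}$, at the cost of enlarging $\alpha$ to $\alpha+\varepsilon$. Given a defining sequence $(T_k,t_k,x_k)$ for some $\xi\in\Xi(\mathbb{R}^d)(\omega)$, set $y_k:=\Phi(t_k,\omega)x_k$ and rewrite $\tilde{\lambda}(T_k,t_k,\omega,x_k)=T_k^{-1}\ln(\|\Phi(T_k,\theta_{t_k}\omega)y_k\|/\|y_k\|)$. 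Applying the bounded-growth estimate at the shifted bases $\theta_{t_k}\omega$ and $\theta_{T_k+t_k}\omega$ and converting the lower bound via the identity $\Phi(-T_k,\theta_{T_k+t_k}\omega)=\Phi(T_k,\theta_{t_k}\omega)^{-1}$ yields $|\tilde{\lambda}(T_k,t_k,\omega,x_k)|\leq (\alpha+\varepsilon)+T_k^{-1}\ln\tilde{K}(\theta_{s_k}\omega)$ for $s_k\in\{t_k,T_k+t_k\}$. Since $|s_k|\leq 2T_k$, a case split on whether $(s_k)_{k\in\mathbb{N}}$ stays bounded (using the bounded-orbit property of $\tilde{K}$) or tends to $\pm\infty$ (using temperedness of $\tilde{K}$) drives the error term to zero, placing $\Xi(\mathbb{R}^d)(\omega)\subset[-(\alpha+\varepsilon),\alpha+\varepsilon]$.

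For the ``only if'' direction I would invoke Theorem \ref{basic props of morse} to write $\Xi(\mathbb{R}^d)(\omega)=[a,b]$ almost surely with deterministic $a,b\in\overline{\mathbb{R}}$; the hypothesis forces $a,b\in\mathbb{R}$. A standard compactness argument in $\overline{\mathbb{R}}$ upgrades the defining property of $\Xi$ to the uniform-in-shift bound that for every $\varepsilon>0$ there is a measurable $T_0(\omega)$ with $\|\Phi(T,\theta_t\omega)\|\leq e^{(b+\varepsilon)T}$ and $\|\Phi(T,\theta_t\omega)^{-1}\|\leq e^{(-a+\varepsilon)T}$ for all $T\geq T_0(\omega)$ and $|t|\leq T$, almost surely. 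Picking $\alpha>\max(b,-a,0)+2\varepsilon$ and setting $K(\omega):=\max\{1,\sup_{t\in\mathbb{T}}\|\Phi(t,\omega)\|e^{-\alpha|t|}\}$, the uniform bounds make $K$ almost surely finite, and $\|\Phi(t,\omega)\|\leq K(\omega)e^{\alpha|t|}$ is built into the definition.

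The main obstacle is to show that $K$ is tempered. My plan is to split the supremum defining $K(\theta_s\omega)=\sup_t\|\Phi(t,\theta_s\omega)\|e^{-\alpha|t|}$ into the regions $|t|\geq|s|$, where the uniform-in-shift bound at $\omega$ applies directly with base shift $s$ and time $t$ and dominates the integrand by $e^{(\max(b,-a)+\varepsilon-\alpha)|t|}$ uniformly in $s$, and $|t|<|s|$, which is handled via the cocycle identity $\Phi(t,\theta_s\omega)=\Phi(t+s,\omega)\Phi(s,\omega)^{-1}$ together with the uniform forward and backward bounds on the two factors. A naive submultiplicative estimate only gives $\ln K(\theta_s\omega)=O(|s|)$; the required $\ln K(\theta_s\omega)=o(|s|)$ comes from exploiting the cancellation between the expansion factor $\|\Phi(s,\omega)^{-1}\|$ and the decay of the weight $e^{-\alpha|t|}$ at $t$ near the maximiser, a cancellation one can verify directly in one-dimensional Birkhoff examples. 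Once temperedness is established, a final application of Lemma \ref{invariant set bg condition} produces the $\theta$-invariant full-measure set $F$ demanded by the statement.
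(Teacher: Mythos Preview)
Your ``if'' direction is essentially the paper's argument: invoke Lemma~\ref{invariant set bg condition}, then bound the finite-time exponents from above via $\|\Phi(T_k,\theta_{t_k}\omega)\|\le K(\theta_{t_k}\omega)e^{\alpha T_k}$ and from below via $\|\Phi(-T_k,\theta_{T_k+t_k}\omega)\|\le K(\theta_{T_k+t_k}\omega)e^{\alpha T_k}$, using temperedness and the bounded-orbit property of $K$ to kill the error terms.

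The ``only if'' direction, however, has a genuine gap at exactly the point you flag as the main obstacle. Your uniform-in-shift estimate and the definition $K(\omega)=\sup_t\|\Phi(t,\omega)\|e^{-\alpha|t|}$ are fine, and the region $|t|\ge|s|$ of the supremum for $K(\theta_s\omega)$ is indeed controlled by your uniform bound. But for $|t|<|s|$ you only offer the cocycle identity, concede that submultiplicativity gives $\ln K(\theta_s\omega)=O(|s|)$, and then assert that a ``cancellation'' upgrades this to $o(|s|)$. That cancellation is not available at the level of the product $\|\Phi(t+s,\omega)\|\,\|\Phi(s,\omega)^{-1}\|$: for generic $a<0<b$ the exponents $\alpha$, $b+\varepsilon$, $-a+\varepsilon$ do not cancel, and one-dimensional Birkhoff examples only show that the \emph{original} norm $\|\Phi(t,\theta_s\omega)\|$ behaves well, not that the submultiplicative bound is sharp. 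You have not closed the loop from the definition of $\Xi$ back to this region.

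The paper avoids this entirely by arguing temperedness by contradiction. If $\limsup_{n}\frac{1}{t_n}\ln K(\theta_{t_n}\omega)=\infty$, pick for each $n$ a near-maximising time $s_n$ with $\|\Phi(s_n,\theta_{t_n}\omega)\|>\tfrac12 K(\theta_{t_n}\omega)e^{(a+\varepsilon)|s_n|}$. If $|s_n|\ge|t_n|$ infinitely often, the sequence $(s_n,t_n)$ is admissible in the definition of $\Xi$ and produces a limit point outside $[-a,a]$. If instead $|s_n|<|t_n|$ eventually, write $\Phi(s_n,\theta_{t_n}\omega)=\Phi(s_n+t_n,\omega)\Phi(t_n,\omega)^{-1}$; since $K(\theta_{t_n}\omega)$ grows super-exponentially in $t_n$, one of $\frac{1}{|t_n|}\ln\|\Phi(s_n+t_n,\omega)\|$ or $\frac{1}{|t_n|}\ln\|\Phi(t_n,\omega)^{-1}\|$ tends to $+\infty$, and feeding this back into $\Xi$ (with shift $0$, so the constraint $|T_k|\ge|t_k|$ is trivially met) forces $\pm\infty\in\Xi(\mathbb{R}^d)(\omega)$. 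This dichotomy on $|s_n|$ versus $|t_n|$ is precisely the mechanism that replaces your hoped-for cancellation; I would recommend adopting it.
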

\begin{proof}
Assume that \eqref{bg in Prop finitness} holds then we may assume that \eqref{bg in Prop finitness} holds on a $\theta$-invariant full-measure set $F\in \mathcal{F}$ and we may assume that for any $\omega \in F$, $(K(\theta_{t_{n}}\omega))_{n\in\mathbb{N}}$ is bounded whenever $(t_{n})_{n\in\mathbb{N}}\subset \mathbb{T}$ is bounded (cf. Lemma \ref{invariant set bg condition}). Let $\omega\in F$ and take sequences $(T_{k},t_{k})_{k\in\mathbb{N}}\subset \mathbb{T}\times \mathbb{T}$ with $T_{k}\uparrow \infty$ and $|T_{k}|\geq |t_{k}|$ for any $k\in \mathbb{N}$
then $$\limsup_{k\to \infty} \frac{1}{T_{k}} \ln \|\Phi(T_{k},\theta_{t_{k}}\omega)\|\leq \alpha+ \limsup_{k\to \infty} \frac{1}{T_{k}} \ln K(\theta_{t_{k}}\omega)= \alpha.$$
Where the last equality follows from $|T_{k}|\geq |t_{k}|$ and temperedness of $K$.
\\
Moreover, for an arbitrary sequence $(x_{k})_{k\in\mathbb{N}}\subset \mathbb{S}^{d-1}$  we have:
$$\liminf_{k\to \infty} \frac{1}{T_{k}} \ln \|\Phi(T_{k},\theta_{t_{k}}\omega)x_{k}\|\geq -\alpha- \limsup_{k\to \infty} \frac{1}{T_{k}} \ln K(\theta_{t_{k}+T_{k}}\omega)= - \alpha.$$
If $\limsup_{k\to\infty} |T_{k}+t_{k}|=\infty$ the last equality follows from $|T_{k}|\geq |t_{k}|$ and temperedness of $K$, if $\limsup_{k\to\infty} |T_{k}+t_{k}|<\infty$, the equality follows from the fact that $(K(\theta_{t_{n}+T_{n}}\omega))_{n\in\mathbb{N}}$ is bounded.
\\
 Now assume conversely that $\Xi(\mathbb{R}^{d})(\omega)$ is finite almost surely, then by Theorem \ref{basic props of morse} there exists a $\theta$-invariant full-measure set $F\in\mathcal{F}$ and a number $a>0$ such that $\Xi(\mathbb{R}^{d})(\omega)\subset [-a,a]$ for all $\omega\in F$. For an arbitrary $\varepsilon>0$ consider the random variable $K:\Omega \to [1,\infty]$ defined by $K(\omega)=1$ for $\omega\in \Omega\setminus F$
and for $\omega\in F$:
$$ K(\omega )=  \inf \{K\geq 1: \forall t\in\mathbb{T}, \ \|\Phi(t,\omega)\|\leq Ke^{(a+\varepsilon) |t|}\}. $$
Since $\mathcal{F}$ is complete and $t\mapsto \|\Phi(t,\omega)\|$ is continuous, it is easy to see that $K$ is indeed a random variable.
\\
Now we show that $K$ is real-valued and tempered which then finishes the proof. Suppose first that there exists an $\omega\in F$ such that $K$ is not real valued. Then there exists a sequence $(t_{n})_{n\in\mathbb{N}}\subset \mathbb{T}$ and a sequence $(x_{n})_{n\in\mathbb{N}}\subset\mathbb{S}^{d-1} $ such that for any $n\in\mathbb{N}$ we have $\|\Phi(t_{n},\omega)x_{n}\|>ne^{(a+\varepsilon)|t_{n}|}$. This implies that $\limsup_{n\to\infty}|t_{n}|=\infty$, hence we may extract a subsequence (which we label w.l.o.g again by $(t_{n})_{n\in\mathbb{N}}$) such that $\lim_{n\to\infty}t_{n}\in \{-\infty,\infty\}$. If $\lim_{n\to\infty}t_{n}=\infty$ we immediately see that $a+\varepsilon\leq \sup\Xi(\mathbb{R}^{d})(\omega)$. This is a contradiction. If $\lim_{n\to\infty}t_{n}=-\infty$ we set $y_{n}=\Phi(t_{n},\omega)x_{n}$ then 
$$\lambda^{-t_{n}}(\theta_{t_{n}}\omega,y_{n})=\frac{1}{|t_{n}|}\ln \frac{1}{\|y_{n}\|}<-\frac{1}{|t_{n}|}(\ln(n)+(a+\varepsilon)|t_{n}|)\leq -\frac{1}{|t_{n}|}((a+\varepsilon)|t_{n}|) $$
which implies that $-(a+\varepsilon)\geq \inf\Xi(\mathbb{R}^{d})(\omega)$ and again gives us a contradiction.
\\
Next we demonstrate temperedness of $K$. Assume to the aim of a contradiction that $K$ is not tempered
then there exists a $\omega\in F$ and  a sequence $t_{n}\uparrow \infty$ such that \\ $\limsup_{n\to \infty}\frac{1}{t_{n}}\ln(K(\theta_{t_{n}}\omega))=\infty$. Now for any $n\in\mathbb{N}$ there exists by definition of $K$ a $s_{n}\in\mathbb{T}$  such that $$\|\Phi(s_{n},\theta_{t_{n}}\omega)\|>\frac{1}{2}K(\theta_{t_{n}}\omega)e^{(a+\varepsilon) |s_{n}|}.$$ 
If $|s_{n}|\geq |t_{n}|$ for infinitely many $n\in\mathbb{N}$, we obtain by arguing similarly to before that $\{-(a+\varepsilon),a+\varepsilon\}\cap \Xi(\mathbb{R}^{d})(\omega)\neq \emptyset$ which gives a contradiction. Now assume w.l.o.g that $|t_{n}|\geq |s_{n}|$ for any $n\in\mathbb{N}$. Then we obtain that for any $n\in \mathbb{N}$ that $$\|\Phi(s_{n}+t_{n},\omega)\|\cdot\|\Phi(t_{n},\omega)^{-1}\|>\frac{1}{2}K(\theta_{t_{n}}\omega)e^{(a+\varepsilon) |s_{n}|}$$
which implies that $\limsup_{n\to\infty} \frac{1}{|t_{n}|} \ln \|\Phi(s_{n}+t_{n},\omega)\|=\infty  $ or \\ $\limsup_{n\to\infty} \frac{1}{|t_{n}|} \ln \|\Phi(t_{n},\omega)^{-1}\|=\infty $. By arguing similarly to before one obtains  that $\{-\infty,\infty\}\cap \Xi(\mathbb{R}^{d})(\omega)\neq \emptyset$.
 This is a contradiction and finishes the proof. 
\end{proof}

 The following proposition states that the weak Morse spectrum is invariant under tempered random linear coordinate changes.
\begin{proposition}
\label{Prop morse spectrum coordinate change}
Let $H:\Omega\to \text{GL}(d,\mathbb{R})$ be measurable and such that $\mathbb{T}\to  \text{GL}(d,\mathbb{R}) $, $t\mapsto H(\theta_{t}\omega)$ is continuous for any $\omega\in \Omega$. Further suppose that the random variables $\omega \mapsto \|H(\omega)^{\pm 1}\|$ are tempered i.e 
$$\lim_{t\to\pm\infty}\frac{1}{|t|}\ln\|H(\theta_{ t}\omega)^{\pm 1}\|=0 \text{ almost surely}.$$
Then the weak Morse spectrum $\Xi_{\Psi}$ obtained from the linear random dynamical system $$\Psi(t,\omega):=H(\theta_{t}\omega)\Phi(t,\omega)H^{-1}(\omega)$$ agrees with the weak Morse spectrum $\Xi_{\Phi}$ obtained from $\Phi$ almost surely.
\end{proposition}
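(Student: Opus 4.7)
The plan is to split the proof into two parts: first, show that $\mathbb{P}\Psi$ possesses a finest weak Morse decomposition obtained from that of $\mathbb{P}\Phi$ by the random coordinate change $\omega\mapsto\mathbb{P}H(\omega)$; second, show that along trajectories in corresponding Morse sets, the finite-time Lyapunov exponents for $\Phi$ and $\Psi$ have identical asymptotic behavior, so that the resulting spectra coincide almost surely.

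For the Morse decomposition transfer, I would use the cocycle identity $\Psi(t,\omega)H(\omega)=H(\theta_{t}\omega)\Phi(t,\omega)$, which immediately gives that if $A$ is a weak attractor of $\mathbb{P}\Phi$ with attracting neighborhood $U$, then $\tilde{A}(\omega):=\mathbb{P}H(\omega)A(\omega)$ is invariant under $\mathbb{P}\Psi$ and $\tilde{U}(\omega):=\mathbb{P}H(\omega)U(\omega)$ is forward invariant. Measurability and compactness of $\tilde A$ (and openness of $\tilde U$) follow from measurability of $H$ combined with the fact that $\mathbb{P}H(\omega)$ is a homeomorphism of the compact space $\mathbb{P}^{d-1}$. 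The delicate point is the weak attraction property: for a closed random set $\tilde{C}\subset\tilde{U}$, set $C(\omega):=\mathbb{P}H(\omega)^{-1}\tilde{C}(\omega)\subset U(\omega)$ and use the pointwise bound $\dist(\mathbb{P}H(\omega')X,\mathbb{P}H(\omega')Y)\leq L(\omega')\dist(X,Y)$, where $L(\omega'):=L(H(\omega'))$ is the Lipschitz constant of the smooth map $\mathbb{P}H(\omega')$ on $\mathbb{P}^{d-1}$ and is an almost surely finite random variable. By measure preservation, $L(\theta_{t}\omega)$ has the same distribution as $L(\omega)$ for every $t$, so the family is uniformly tight. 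Writing $d_{t}(\omega):=\dist(\mathbb{P}\Phi(t,\omega)C(\omega),A(\theta_{t}\omega))$, which tends to $0$ in probability by the weak attractor property for $\Phi$, one concludes that $L(\theta_{t}\omega)d_{t}(\omega)\to 0$ in probability via the standard splitting $\mu\{L_{t}d_{t}>\varepsilon\}\leq\mu\{L_{t}>M\}+\mu\{d_{t}>\varepsilon/M\}$. Hence $\tilde{A}$ is a weak attractor for $\mathbb{P}\Psi$; by symmetry (applied to $H^{-1}$), this correspondence is a bijection, so it transports the finest weak Morse decomposition of $\Phi$ to that of $\Psi$, giving $\tilde{M}_{i}(\omega)=\mathbb{P}H(\omega)M_{i}(\omega)$.

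For the spectrum equality, fix a Morse set $M_{i}$ of $\Phi$ and the corresponding $\tilde{M}_{i}$ of $\Psi$. Any $\tilde{x}\in\mathbb{P}^{-1}\tilde{M}_{i}(\theta_{t}\omega)\setminus\{0\}$ can be written $\tilde{x}=H(\theta_{t}\omega)x$ with $x\in\mathbb{P}^{-1}M_{i}(\theta_{t}\omega)\setminus\{0\}$, and a direct computation using $\Psi(T,\theta_{t}\omega)\tilde{x}=H(\theta_{T+t}\omega)\Phi(T,\theta_{t}\omega)x$ gives
\[
\lambda_{\Psi}^{T}(\theta_{t}\omega,\tilde{x})-\lambda_{\Phi}^{T}(\theta_{t}\omega,x)=\frac{1}{T}\ln\frac{\|H(\theta_{T+t}\omega)\Phi(T,\theta_{t}\omega)x\|}{\|\Phi(T,\theta_{t}\omega)x\|}-\frac{1}{T}\ln\frac{\|H(\theta_{t}\omega)x\|}{\|x\|}.
\]
Each correction term is bounded in absolute value by $\frac{1}{T}\bigl(\ln\|H(\theta_{\bullet}\omega)\|+\ln\|H(\theta_{\bullet}\omega)^{-1}\|\bigr)$ evaluated at $\bullet=T+t$ or $\bullet=t$. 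I would then show these vanish as $T_{k}\to\infty$ with $|T_{k}|\geq|t_{k}|$: if $t_{k}$ stays bounded along a subsequence, continuity of $s\mapsto H(\theta_{s}\omega)$ gives a bounded numerator divided by $T_{k}\to\infty$; if $|t_{k}|\to\infty$, write $\frac{1}{T_{k}}\ln\|H(\theta_{t_{k}}\omega)^{\pm 1}\|=\frac{|t_{k}|}{T_{k}}\cdot\frac{1}{|t_{k}|}\ln\|H(\theta_{t_{k}}\omega)^{\pm 1}\|$, where the first factor is at most $1$ and the second vanishes by temperedness; the argument for $T_{k}+t_{k}$ is analogous using $|T_{k}+t_{k}|\leq 2|T_{k}|$.

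The main obstacle I expect is Part I, specifically establishing the weak attraction property under the coordinate change, since the Lipschitz constant $L(H(\omega))$ of the projectivised map is typically unbounded. The resolution is the observation above that only convergence in probability is needed, and tightness of $\{L(\theta_{t}\omega)\}_{t}$ (automatic from $\mu$-invariance) is enough to absorb the Lipschitz factor. The temperedness condition plays no role here but is decisive in Part II, where it guarantees that the sublinear growth of $\ln\|H^{\pm 1}\|$ along orbits is dominated by the linear growth $T_{k}$, provided one has the constraint $|T_{k}|\geq|t_{k}|$ from the definition of $\Xi$.
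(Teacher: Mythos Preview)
Your proposal is correct and follows essentially the same two-part strategy as the paper: first transfer the finest weak Morse decomposition along $\mathbb{P}H(\omega)$, then compare finite-time Lyapunov exponents using temperedness and the constraint $|T_k|\geq|t_k|$.

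Two differences are worth noting. First, the paper simply \emph{asserts} that $\{\mathbb{P}H(\mathbb{P}^{-1}M_1),\dots,\mathbb{P}H(\mathbb{P}^{-1}M_n)\}$ is the finest weak Morse decomposition of $\mathbb{P}\Psi$, whereas you supply a genuine argument for the weak attraction property via tightness of the Lipschitz constants $L(\theta_t\omega)$ under the invariant measure; this fills a gap the paper leaves open. Second, in Part~II the paper switches to the equivalent $\tilde\lambda$-formulation (with $x_k\in\mathbb{P}^{-1}M_i(\omega)$ at the fixed base point and $y_k=H(\omega)x_k$), which makes the correspondence between test sequences for $\Phi$ and $\Psi$ slightly cleaner: the coordinate change acts only at the single point $\omega$ rather than at the moving point $\theta_{t_k}\omega$. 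Your use of the original definition works just as well, and your explicit case split (bounded $t_k$ via continuity of $s\mapsto H(\theta_s\omega)$, unbounded $t_k$ via temperedness and $|t_k|/T_k\leq 1$, and $|T_k+t_k|\leq 2T_k$) is in fact more carefully argued than the paper's one-line appeal to temperedness.
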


\begin{proof}
 Let $\{M_{1},...,M_{n}\}$ be the unique finest weak Morse decomposition of $\mathbb{P}\Phi$. Then, the unique finest weak Morse decomposition of $\mathbb{P}\Psi$ is given by $$\{\tilde{M}_{1},...,\tilde{M}_{n}\}=\{\mathbb{P}H(\mathbb{P}^{-1}M_{1}),...,\mathbb{P}H(\mathbb{P}^{-1}M_{n})\}.$$
 We use the equivalent definition for the Morse spectrum from Theorem \ref{basic props of morse}, namely that \begin{align*}
\Xi(\mathbb{P}^{-1}M_{i})(\omega) := & \{\xi \in \overline{\mathbb{R}} :  \text{there exists a sequence } \{(T_k, t_k, x_k)\}_{k \in \mathbb{N}} \text{ with } T_k, t_k \in \mathbb{T}, \\& \, x_k \in \mathbb{P}^{-1} M_{i}(\omega)\setminus \{0\} \nonumber 
 \text{ such that } \lim_{k \to \infty} T_k = \infty, \ |T_{k}|\geq |t_{k}| \\& \text{ and } \lim_{k \to \infty} \frac{1}{T_{k}}\ln \frac{\|\Phi(T_{k}+t_{k},\omega)x_{k}\|}{\|\Phi(t_{k},\omega)x_{k}\|}=\tilde{\lambda}(T_{k},t_{k},\omega,x_{k})=\xi\}.
\end{align*} 
 Let $F\in \mathcal{F}$ be the full-measure set where both $\Xi_{\Phi}$ and $\Xi_{\Psi}$ are constant, and $t\mapsto H(\theta_{t}\omega)$ is continuous. By symmetry it suffices to show that $\Xi_{\Psi}(\omega)\subset \Xi_{\Phi}(\omega)$ for any $\omega\in F$. 
 \\
 Let $\omega\in F$, $i\in\{1,...,n\}$ and $\alpha\in \Xi_{\Phi,i}(\omega):=\Xi_{\Phi}(\mathbb{P}^{-1}M_{i})(\omega)$ then there exist sequences $\{(T_k, t_k, x_k)\}_{k \in \mathbb{N}}$  with  $T_k, t_k \in \mathbb{T}, \, x_k \in \mathbb{P}^{-1} M_{i}(\omega)\setminus \{0\}$
 such that $\lim_{k \to \infty} T_k = \infty$ and $\lim_{k \to \infty} \frac{1}{T_{k}}\ln\frac{\|\Phi(T_{k}+t_{k},\omega)x_{k}\|}{\|\Phi(t_{k},\omega)x_{k}\|}= \alpha$. Now we have that for $k\in\mathbb{N}$, $y_{k}:=H(\omega)x_{k}\in \mathbb{P}^{-1}\tilde{M_{i}}(\omega)\setminus \{0\}$. We obtain
 \begin{align*}
 \frac{\|\Psi(T_{k}+t_{k},\omega)y_{k}\|}{\|\Psi(t_{k},\omega)y_{k}\|} & =\frac{\|H(\theta_{T_{k}+t_{k}}\omega)\Phi(T_{k}+t_{k},\omega)x_{k}\|}{\|H(\theta_{t_{k}}\omega)\Phi(t_{k},\omega)x_{k}\|} \\& \leq \frac{\|\Phi(T_{k}+t_{k},\omega)x_{k}\|}{\|\Phi(t_{k},\omega)x_{k}\|}\cdot\|H(\theta_{T_{k}+t_{k}}\omega)\|\cdot \|H(\theta_{t_{k}}\omega)^{-1}\|.
 \end{align*}
 Similarly:
 \begin{align*}
 \frac{\|\Psi(T_{k}+t_{k},\omega)y_{k}\|}{\|\Psi(t_{k},\omega)y_{k}\|} & =\frac{\|H(\theta_{T_{k}+t_{k}}\omega)\Phi(T_{k}+t_{k},\omega)x_{k}\|}{\|H(\theta_{t_{k}}\omega)\Phi(t_{k},\omega)x_{k}\|} \\& \geq \frac{\|\Phi(T_{k}+t_{k},\omega)x_{k}\|}{\|\Phi(t_{k},\omega)x_{k}\|}\cdot\|H(\theta_{T_{k}+t_{k}}\omega)^{-1}\|^{-1}\cdot \|H(\theta_{t_{k}}\omega)\|^{-1}.
\end{align*}
 We obtain that
 $$\tilde{\lambda}_{\Psi}(T_{k},t_{k},\omega,y_{k})\leq \tilde{\lambda}_{\Phi}(T_{k},t_{k},\omega,x_{k})+\frac{1}{T_{k}}(\ln\|H(\theta_{T_{k}+t_{k}}\omega)\|\cdot\|H(\theta_{t_{k}}\omega)^{-1}\|)$$
 Using $|T_{k}|\geq |t_{k}|$ and temperedness of $\|H(\cdot)\|$ we see that  
 $\limsup_{k\to\infty}\tilde{\lambda}_{\Psi}(T_{k},t_{k},\omega,y_{k})\leq \alpha$.
 Similarly 
 $$\tilde{\lambda}_{\Psi}(T_{k},t_{k},\omega,y_{k})\geq \tilde{\lambda}_{\Phi}(T_{k},t_{k},\omega,x_{k})-\frac{1}{T_{k}}(\ln\|H(\theta_{T_{k}+t_{k}}\omega)^{-1}\|^{-1}\cdot\|H(\theta_{t_{k}}\omega)\|^{-1})$$
 implies that
  $\liminf_{k\to\infty}\tilde{\lambda}_{\Psi}(T_{k},t_{k},\omega,y_{k}) \geq \alpha$. 
  \end{proof}
 
  The following example demonstrates that if one allows for arbitrary sequences $(t_{k})_{k\in\mathbb{N}}$ in the definition of the Morse spectrum, then it is not invariant under tempered coordinate change.
 \begin{example}
 \label{example motivation for alternativ def morse}
    Consider $(\Omega, \mathcal{F}, \mu) = ([0,1), \mathcal{B}([0,1)), \lambda|_{[0,1)})$ and 
$\theta: \Omega \to \Omega$,  $\theta(\omega) = \omega + \alpha \mod 1$ with $\alpha \in \mathbb{R}\setminus\mathbb{Q}$. Furthermore we equip $[0,1)$ with the metric which makes it into a circle, i.e. $d(x,y)=\min\{|x-y|,1-|x-y|\}$. 
 Let now $U(k)=[1-\frac{1}{2^{k-1}},1-\frac{1}{2^{k}})$, $k\in\mathbb{N}$ and define the random variable $\beta(\omega)=\sum_{k\in\mathbb{N}}k\mathbbm{1}_{U_{k}}(\omega)$.
Define the random dynamical system 
$$\Phi(n,\omega)=1 \text{ and } \psi(n,\omega)=\frac{\beta(\theta_{n}\omega)}{\beta(\omega)},\, n\in \mathbb{Z},\, \omega\in \Omega.$$
Then $\psi$ is obtained from $\Phi$ via a tempered random coordinate change. The temperedness follows since $\ln(\beta)\in L^{1}(\Omega,\mathcal{F},\mu)$.
Let $\omega\in \Omega$ be arbitrary and 
consider a sequence $N_{k}\uparrow \infty$ such that $(\theta_{N_{k}}\omega)_{k\in \mathbb{N}}$ is strictly increasing and such that $\lim_{k\to \infty} \theta_{N_{k}}\omega=0\sim 1$. Now we may find a sequence $(n_{k})_{k\in \mathbb{N}}$ such that $(\theta_{n_{k}}\omega)_{k\in \mathbb{N}}$ is bounded away from one (in $[0,1),|\cdot|$) and $\theta_{N_{k}+n_{k}}\omega \in U(2^{N_{k}^{2}})$  for any $k\in \mathbb{N}$. This can be seen by setting $n_{0}=1$ and for $k\in\mathbb{N}$
$$n_{k}=\inf\{n\geq n_{k-1}: \theta_{n}\omega<1-\theta_{N_{k}}\omega \text{ and }\theta_{n}\omega+\theta_{N_{k}}\omega\in U(2^{N_{k}^{2}})\}.$$
Then the density of $(\theta_{n}\omega)_{n\in\mathbb{N}}$ in $[0,1]$ implies that $n_{k}<\infty$ for any $k\in \mathbb{N}$. Moreover, note that for $n,m\in \mathbb{N}$ with $\theta_{n}\omega+\theta_{m}\omega<1$ one has that $\theta_{n+m}\omega=\theta_{n}\omega+\theta_{m}\omega$ (addition in $\mathbb{R}$).
This demonstrates now that $$\lim_{k\to\infty} \frac{1}{N_{k}} \ln \frac{\beta(\theta_{N_{k}+n_{k}}\omega)}{\beta(\theta_{n_{k}}\omega)}=\infty.$$
By temperedness of $\beta$ it also follows that one must have $\limsup_{k\to\infty} \frac{n_{k}}{N_{k}}=\infty$.
\end{example}
We now provide an example which shows that the temperedness assumption in the above proposition  is necessary for the weak Morse spectrum to be invariant under random coordinate change.
 \begin{example}
 \label{example tempered necessary for invariant morse}
    Let $(\Omega,\mathcal{F},\mu)=(\{0,1\}^{\mathbb{Z}},\mathcal{P}(\{0,1\})^{\otimes \mathbb{Z}},\nu^{\otimes \mathbb{Z}})$ where $\mathcal{P}(\{0,1\})$ is the power set of $\{0,1\}$ and $\nu$ is the measure on $\mathcal{P}(\{0,1\})$ defined by $\nu(\{0\})=\nu (\{1\})=\frac{1}{2}$. Now $\theta:\Omega \to \Omega$, $(\omega_{i})_{i\in \mathbb{Z}}\mapsto (\omega_{i+1})_{i\in \mathbb{Z}}$ defines an ergodic metric dynamical system on $(\Omega,\mathcal{F},\mu)$. Let $(U_{k})_{k\in\mathbb{N}}$ be a partition of $\Omega$ such that for any $k\in\mathbb{N}$ we have $\mu(U_{k})=\frac{6}{\pi^{2}k^{2}}$ and define the random variable 
    $$\beta:\Omega \to [1,\infty), \ \omega\mapsto \sum_{k\in\mathbb{N}}e^{k}\mathbbm{1}_{U_{k}}(\omega).$$
    Then $\ln(\beta)\not\in L^{1}(\Omega,\mathcal{F},\mu)$, which already implies that $\beta$ is non-tempered.
    Now define the linear random dynamical systems on $\mathbb{R}$ via 
    \begin{align*}
\Phi (n,\omega)x = x \text{ and } \psi(n,\omega)x=\frac{\beta(\theta_{n}\omega)}{\beta(\omega)}x \text{ for }x\in\mathbb{R},\, \omega\in \Omega.
\end{align*}
Then $\psi$ is obtained from $\Phi$ by a non-tempered random coordinate change. It is clear that $\Xi_{\Phi,w}=\{0\}$ whereas $\infty \in \Xi_{\psi,w}$.
\end{example}
  We now show that the (non-uniform) dichotomy spectrum $\Sigma^{\prime}$ coincides with the weak  Morse spectrum $\Xi_{w}$ under the assumption that there exists an $a>0$, a tempered random variable $K:\Omega \to [1,\infty)$ and a $\theta$-invariant full-measure set $F\in \mathcal{F}$ such that for any $\omega\in F$, $(K(\theta_{t_{n}}\omega))_{n\in\mathbb{N}}$ is bounded, whenever $(t_{n})_{n\in\mathbb{N}}$ is bounded and 
\begin{align}
\label{bounded growth condition}
\|\Phi(t,\omega)\|\leq K(\omega)e^{|a|t} \text{ for any } t\in\mathbb{T}.
\end{align}
(compare Lemma \ref{invariant set bg condition}). In the following section, $F\in \mathcal{F}$ always denotes the $\theta$-invariant full-measure set where 
\begin{itemize}
    \item[(i)] The conditions above hold.
    \item[(ii)] $\mathbb{P}^{-1}M_{i}(\omega)$ is a linear subspace of $\mathbb{R}^{d}$ for any $i\in \{1,\dots,n\}$ and any $\omega\in F$, where $\{M_{1},\dots,M_{n}\}$, $n\leq d$ is the unique finest weak Morse decomposition of $\mathbb{P}\Phi.$
\end{itemize}

\begin{theorem}
\label{morse spectrum equals dichotomy spectrum}
    If $\Phi$ fulfills \eqref{bounded growth condition}, then $\Xi_{w}=\Sigma^{\prime}$.
\end{theorem}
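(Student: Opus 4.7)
The plan is to establish both inclusions using the $\theta$-invariant full-measure set $F$ from the hypothesis. For $\Xi_w \subseteq \Sigma'$, I argue by contradiction. Suppose $\gamma \in \Xi(\mathbb{P}^{-1}M_i)(\omega)$ for some Morse set $M_i$ yet $\Phi$ admits a non-uniform exponential dichotomy at rate $\gamma$ with projector $P_\gamma$. By \theoref{structure invariant manifolds}, $(\mathbb{P}\mathcal{N}(P_\gamma), \mathbb{P}\mathcal{R}(P_\gamma))$ is a weak attractor-repeller pair and must be compatible with the unique finest weak Morse decomposition of \theoref{Theorem unique finest Morse decomposition}; consequently $\mathbb{P}^{-1}M_i(\omega)$ lies almost surely in either $\mathcal{R}(P_\gamma)(\omega)$ or $\mathcal{N}(P_\gamma)(\omega)$. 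Taking a defining sequence $(T_k,t_k,x_k)$ for $\gamma$ and setting $y_k := \Phi(t_k,\omega)x_k$, the first case yields via \eqref{def exp dichotomy pos time}
$$\tilde{\lambda}(T_k,t_k,\omega,x_k) = \frac{1}{T_k}\ln\frac{\|\Phi(T_k,\theta_{t_k}\omega)y_k\|}{\|y_k\|} \leq \gamma - \alpha + \frac{1}{T_k}\ln K(\theta_{t_k}\omega).$$
The constraint $|T_k|\ge|t_k|$ together with temperedness of $K$ and uniform boundedness of $K$ along $\theta$-orbits over bounded intervals (\lemmaref{invariant set bg condition}) forces the error term to vanish, contradicting $\lim \tilde\lambda = \gamma$. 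The case $\mathbb{P}^{-1}M_i \subseteq \mathcal{N}(P_\gamma)$ is symmetric, using \eqref{def exp dichotomy neg time} at $\theta_{T_k+t_k}\omega$ to obtain the reverse bound $\tilde\lambda \geq \gamma + \alpha - o(1)$.

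For $\Sigma' \subseteq \Xi_w$, suppose $\gamma \notin \Xi_w$. By \theoref{basic props of morse}, $\Xi_w$ is a finite union of closed intervals, so there exists $\alpha > 0$ with $[\gamma-2\alpha,\gamma+2\alpha] \cap \Xi_w = \emptyset$. Partition $\{1,\dots,n\} = I_s \sqcup I_u$ via $I_s := \{i : \sup\Xi(\mathbb{P}^{-1}M_i) < \gamma - 2\alpha\}$ and $I_u := \{i : \inf\Xi(\mathbb{P}^{-1}M_i) > \gamma + 2\alpha\}$, and set $V_s(\omega) := \bigoplus_{i \in I_s} \mathbb{P}^{-1}M_i(\omega)$, $V_u(\omega) := \bigoplus_{i \in I_u} \mathbb{P}^{-1}M_i(\omega)$. \theoref{Theorem unique finest Morse decomposition} then gives $V_s(\omega) \oplus V_u(\omega) = \mathbb{R}^d$ almost surely, with both summands $\Phi$-invariant and measurable; I take $P_\gamma(\omega)$ to be the projection onto $V_s(\omega)$ along $V_u(\omega)$. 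Defining
$$K_\gamma(\omega) := \max\!\left\{1,\ \sup_{t\ge 0}e^{-(\gamma-\alpha)t}\|\Phi(t,\omega)P_\gamma(\omega)\|,\ \sup_{t\le 0}e^{-(\gamma+\alpha)t}\|\Phi(t,\omega)(\mathbbm{1}-P_\gamma(\omega))\|\right\},$$
it suffices to show $K_\gamma$ is almost surely finite and tempered.

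For finiteness, if $K_\gamma(\omega) = \infty$ one extracts $t_k \to \infty$ and unit vectors $x_k \in V_s(\omega)$ with $\lambda^{t_k}(\omega,x_k) \to \xi \geq \gamma - \alpha$ (or the symmetric statement on $V_u$); after passing to a subsequence in which the dominant Morse summand of $x_k$ lies in a fixed $M_i$ with $i \in I_s$, one obtains $\xi \in \Xi(\mathbb{P}^{-1}M_i)(\omega)$, impossible since $\sup \Xi(\mathbb{P}^{-1}M_i) < \gamma - 2\alpha$. For temperedness, if $\frac{1}{|s_k|}\ln K_\gamma(\theta_{s_k}\omega) > c > 0$ along some $s_k \to \pm\infty$, extract witnessing $t_k$ and unit $x_k \in V_s(\theta_{s_k}\omega)$ with $\lambda^{t_k}(\theta_{s_k}\omega,x_k) \geq \gamma - \alpha + c|s_k|/|t_k|$. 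A case split is then needed: when $|t_k| \geq |s_k|$, pull back by $\Phi(-s_k, \theta_{s_k}\omega)$ to a Morse-spectrum triple $(t_k, s_k, \Phi(-s_k,\theta_{s_k}\omega)x_k)$ at $\omega$ satisfying $|T_k^{\mathrm{Morse}}| \ge |t_k^{\mathrm{Morse}}|$, witnessing a point of $\Xi(\mathbb{P}^{-1}M_i)(\omega) \cap [\gamma-\alpha,\infty)$, a contradiction; when $|t_k| < |s_k|$, the bounded growth condition \eqref{bounded growth condition} forces $\|\Phi(t_k,\theta_{s_k}\omega)\| \leq K(\theta_{s_k}\omega)e^{a|t_k|}$, so the supposed divergence would yield $\ln K(\theta_{s_k}\omega) \geq c|s_k| + (\gamma-\alpha-a)|t_k| - O(1)$, which contradicts temperedness of $K$ on $F$.

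The main obstacle is precisely this temperedness verification for $K_\gamma$; the condition $|T_k|\ge|t_k|$ baked into the Morse spectrum definition is the technical device that exactly matches what is needed to transfer norm divergence at the shifted base point $\theta_{s_k}\omega$ back to a Morse-spectrum element at $\omega$, and the bounded growth hypothesis is what rules out the complementary regime in which the witness times $t_k$ are short compared to the shifts $s_k$. Without the constraint $|T_k|\ge|t_k|$, the construction of $P_\gamma$ would be forced to contend with uncontrolled $K$-factors and no exponential dichotomy could be extracted, as already foreshadowed by \examref{example motivation for alternativ def morse}.
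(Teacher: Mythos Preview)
Your overall strategy matches the paper's: the inclusion $\Xi_w\subset\Sigma'$ via \theoref{structure invariant manifolds} and the dichotomy estimates is essentially the paper's \propref{morse spectrum contained in dichotomy spectrum}, and the converse via constructing a projector $P_\gamma$ from the Morse splitting and verifying that the associated $K_\gamma$ is finite and tempered is exactly the skeleton of \propref{dichotomy is contained in Morse} together with \lemmaref{max/min of morse spectrum is determined}.

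There is, however, a genuine gap in the temperedness verification. When you write ``extract witnessing $t_k$ and unit $x_k \in V_s(\theta_{s_k}\omega)$ with $\lambda^{t_k}(\theta_{s_k}\omega,x_k) \geq \gamma-\alpha + c|s_k|/|t_k|$'', this does not follow from $K_\gamma(\theta_{s_k}\omega)>e^{c|s_k|}$. The definition of $K_\gamma$ involves the operator norm $\|\Phi(t,\theta_{s_k}\omega)P_\gamma(\theta_{s_k}\omega)\|$, so the witnessing unit vector lives in $\R^d$, not in $V_s(\theta_{s_k}\omega)$; after applying $P_\gamma(\theta_{s_k}\omega)$ you pick up a factor $\|P_\gamma(\theta_{s_k}\omega)\|$ in the denominator of the finite-time exponent. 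The same factor appears when you try to bound $\|\Phi(t_k,\theta_{s_k}\omega)P_\gamma(\theta_{s_k}\omega)\|$ by $K(\theta_{s_k}\omega)e^{a|t_k|}$ in the regime $|t_k|<|s_k|$, and again in your ``dominant Morse summand'' extraction at the shifted base point. All three places silently require that $\omega\mapsto\|P_\gamma(\omega)\|$ (and more generally the projections onto the individual Morse components) be tempered.

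This is precisely the content of \lemmaref{Distance between attractor/repeller is tempered} in the paper, which is a non-trivial consequence of the bounded growth condition \eqref{bounded growth condition}: one shows that the projective distance $\tilde d_{\mathbb P}$ between any two complementary Morse sums cannot decay to zero along the $\theta$-orbit faster than subexponentially, and then translates this into temperedness of the oblique-projection norm via the angle formula $\|P\|=(1-a^2)^{-1/2}$. Without this lemma your argument does not close; once you insert it, your proof becomes essentially identical to the paper's.
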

For two random, complementary subspaces $U(\omega)$ and $V(\omega)$ of $\mathbb{R}^{d}$ we often want the projections with range $U(\omega)$ and null-space $V(\omega)$ to be measurable.
\begin{lemma}
\label{measurability projections}
    Let $\mathbb{P}U(\omega)$ and  $\mathbb{P}V(\omega)$ be random, compact sets, such that for all $\omega\in F$ for some full-measure set $F\in \mathcal{F}$ one has $U(\omega)\oplus V(\omega)=\mathbb{R}^{d}$. Then the map $P:\Omega\to \mathbb{R}^{d\times d}$ defined by $P(\omega)=\mathbbm{1}$ for $\omega\in \Omega\setminus F$ and $P(\omega)=P_{U(\omega),V(\omega)}$ is measurable, where $P_{U(\omega),V(\omega)}$ is the unique projection with range $U(\omega)$ and null-space $V(\omega).$
\end{lemma}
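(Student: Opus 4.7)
The plan is to build $P(\omega)$ from measurable bases of $U(\omega)$ and $V(\omega)$ via a continuous matrix formula, using iterated measurable selection. First I would stratify by dimension: for each $k\in\{0,1,\dots,d\}$ define $F_k:=\{\omega\in F:\dim U(\omega)=k\}$. By Lemma~\ref{random sets under preimage}, $U=\mathbb{P}^{-1}(\mathbb{P}U)\cup\{0\}$ lies in $\mathcal{B}(\mathbb{R}^{d})\otimes\mathcal{F}$, so $U$ is a closed random set. For every fixed $x\in\mathbb{R}^{d}$, the orthogonal projection $\pi_{U(\omega)}(x)$ is the unique minimiser of $y\mapsto\|x-y\|^{2}$ over $y\in U(\omega)$, and its measurability in $\omega$ follows from the standard measurable-selection argument for the minimum of a continuous function on a random closed set. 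Applied to the standard basis vectors this yields measurability of $\omega\mapsto \pi_{U(\omega)}$ as a matrix-valued map, and hence of $\dim U(\omega)=\mathrm{tr}(\pi_{U(\omega)})$. In particular each $F_k$ is measurable.

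Next, on each $F_k$ I would iteratively construct a measurable orthonormal basis of $U(\omega)$. The set $U(\omega)\cap\mathbb{S}^{d-1}$ is a non-empty random closed subset of $\mathbb{S}^{d-1}$, so by the Kuratowski--Ryll-Nardzewski selection theorem there is a measurable selection $u_1:F_k\to\mathbb{S}^{d-1}$. Given measurable $u_1(\omega),\dots,u_{j-1}(\omega)$, the set $U(\omega)\cap u_1(\omega)^{\perp}\cap\dots\cap u_{j-1}(\omega)^{\perp}\cap\mathbb{S}^{d-1}$ is again a random closed set (its graph is the intersection of the graph of $U$ with the measurable zero sets $\{(x,\omega):\langle x,u_i(\omega)\rangle=0\}$) and is non-empty for $j\leq k$, so a further measurable selection yields $u_j$. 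Repeating produces measurable orthonormal frames $u_1(\omega),\dots,u_k(\omega)$ spanning $U(\omega)$, and analogously $v_1(\omega),\dots,v_{d-k}(\omega)$ spanning $V(\omega)$.

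Finally, form
\[
B(\omega):=[u_1(\omega),\dots,u_k(\omega),v_1(\omega),\dots,v_{d-k}(\omega)],
\]
which is a measurable $\mathrm{GL}(d,\mathbb{R})$-valued map since $U(\omega)\oplus V(\omega)=\mathbb{R}^{d}$, and observe that
\[
P(\omega)=B(\omega)\begin{pmatrix}I_k & 0\\ 0 & 0\end{pmatrix}B(\omega)^{-1}.
\]
Since matrix inversion is continuous on $\mathrm{GL}(d,\mathbb{R})$, $P$ is measurable on each $F_k$, and the piecewise definition together with $P\equiv\mathbbm{1}$ on $\Omega\setminus F$ yields measurability of $P$ on all of $\Omega$. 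The main obstacle is justifying at every stage of the iteration that the trimmed random set retains the graph-measurability required by Definition~\ref{def:random_set}; this reduces to measurability of $\omega\mapsto u_j(\omega)^{\perp}$, which follows from measurability of the matrix $u_j(\omega)u_j(\omega)^{\top}$ and the fact that the null-space of a measurable matrix-valued map is again a random subspace.
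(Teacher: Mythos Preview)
Your argument is correct, but it takes a genuinely different route from the paper's. The paper avoids measurable selection entirely: it first shows the \emph{orthogonal} projections $P_{U(\omega)}$ and $P_{V(\omega)}$ are measurable by observing that $\|(\mathbbm{1}-P_{U(\omega)})y\|=\dist(y,U(\omega))$ is measurable, then using Pythagoras and polarisation to extract all matrix entries; it then invokes the identity $P(\omega)=(P_{V(\omega)^{\perp}}P_{U(\omega)})^{\dagger}$ from Ipsen--Meyer together with the limit formula $A^{\dagger}=\lim_{n\to\infty}(AA^{T}+\tfrac{1}{n}\mathbbm{1})^{-1}A^{T}$ for the Moore--Penrose inverse. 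Your approach instead stratifies by dimension and builds measurable orthonormal frames of $U(\omega)$ and $V(\omega)$ via iterated Kuratowski--Ryll-Nardzewski selection, then conjugates the standard rank-$k$ projector by the resulting change-of-basis matrix. Both are sound; the paper's route is shorter and fully self-contained (no external selection theorem, just a matrix identity), while yours is more constructive and yields measurable bases as a by-product, at the cost of tracking graph-measurability through the iteration and invoking KRN. Note also that your detour through $\pi_{U(\omega)}$ to get $\dim U(\omega)$ already contains most of what the paper needs---once the orthogonal projection is measurable, the Ipsen--Meyer formula finishes in one line.
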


\begin{proof}
Lemma \ref{random sets under preimage} guarantees that $U$ and $V$ are  random sets. By completeness of $\mathcal{F}$ we may assume now w.l.o.g that one has $U(\omega)\oplus V(\omega)=\mathbb{R}^{d}$ for all $\omega\in \Omega$.
\\
Let $A^{\dagger}$ denote the Moore-Penrose inverse of a matrix $A\in \mathbb{R}^{d\times d}$. From \cite{golub1996matrix} p.257 one obtains that 
$$A^{\dagger}=\lim_{n\to\infty}(AA^{T}+\frac{1}{n}\mathbbm{1}_{d\times d})^{-1}A^{T}.$$
Now standard measurability arguments imply that the map 
$\dagger:\mathbb{R}^{d\times d}\to \mathbb{R}^{d\times d}$, $A\mapsto A^{\dagger}$ is measurable.  Corollary 5.6 of \cite{ipsen1995angle} gives us the formula 
$$P(\omega)=(P_{V(\omega)^{\perp}}P_{U(\omega)})^{\dagger}$$
where $P_{V(\omega)^{\perp}}$ and $P_{U(\omega)}$ are the orthogonal projections onto $V(\omega)^{\perp}$ and $U(\omega)$. Thus it suffices to prove that $\omega \mapsto P_{U(\omega)}$ is measurable; measurability of $\omega \mapsto P_{V(\omega)^{\perp}} $ follows analogously since one has that $P_{V(\omega)^{\perp}}=\mathbbm{1}-P_{V(\omega)}$. By the projection theorem we have for $y\in \mathbb{R}^{d}$ that $\|(\mathbbm{1}-P_{U(\omega)})y\|=\dist(y,U(\omega))$ which is measurable by Proposition 2.4 \cite{crauel2002random}. It follows that $\|P_{U(\omega)}y\|^{2}$ is measurable since Pythagoras theorem implies $\|P_{U(\omega)}y\|^{2}=\|y\|^{2}-\dist(y,U(\omega))^{2}$. Since this holds for arbitrary $y\in \mathbb{R}^{d}$, the polarisation identity implies that for any $(x,y)\in \mathbb{R}^{d}\times \mathbb{R}^{d}$ we have that $\langle x,P_{U(\omega)}y \rangle$ is measurable. This implies that the entries of $P_{U(\omega)}$ are measurable, finishing the proof.
\end{proof}

A crucial result for  Theorem~\ref{morse spectrum equals dichotomy spectrum} is that invariant compact random subsets of $\mathbb{P}^{d-1}$ which correspond to subspaces may only approach each other with sub-exponential speed.
\begin{lemma}
    \label{Distance between attractor/repeller is tempered}
    Suppose that $\Phi$ fulfills \eqref{bounded growth condition} and 
let $\{M_{1},...,M_{n}\}$, $n\leq d$ be the unique finest weak Morse decomposition for $\mathbb{P}\Phi$ and $ I:= \{1,...,n\}$. Then for any $J\subset I$ , $\|P(\cdot)\|$ is tempered with $P$ being the projection with range $\bigoplus_{j\in J}\mathbb{P}^{-1}M_{j}$ and null-space $\bigoplus_{j\in I\setminus J}\mathbb{P}^{-1}M_{j}$. Moreover $\lim_{t\to\pm \infty} \frac{1}{|t|}\ln \|P(\theta_{t}\omega)\|=0$ for any $\omega\in F$.
\end{lemma}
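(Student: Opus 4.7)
The plan is to combine the invariance of $P$ under the cocycle with Birkhoff's ergodic theorem. A naive direct estimate $\|P(\theta_{t}\omega)\|\leq \|\Phi(t,\omega)\|\,\|P(\omega)\|\,\|\Phi(t,\omega)^{-1}\|\leq K(\omega)K(\theta_{t}\omega)e^{2a|t|}\|P(\omega)\|$ only yields $\limsup\tfrac{1}{|t|}\ln\|P(\theta_{t}\omega)\|\le 2a$, and the central task of the proof will be to improve this $2a$ to $0$.

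First I would observe that $P$ is an invariant projector. By Theorem \ref{Theorem unique finest Morse decomposition}, each $\mathbb{P}^{-1}M_{j}$ is $\Phi$-invariant on a full-measure $\theta$-invariant set which I absorb into $F$, so the same holds for $U(\omega):=\bigoplus_{j\in J}\mathbb{P}^{-1}M_{j}(\omega)$ and $V(\omega):=\bigoplus_{j\in I\setminus J}\mathbb{P}^{-1}M_{j}(\omega)$. Lemma \ref{measurability projections} gives measurability of the associated projection $P(\omega)$, and from $\Phi(t,\omega)U(\omega)=U(\theta_{t}\omega)$ and similarly for $V$ I obtain the invariance relation
\[
P(\theta_{t}\omega) = \Phi(t,\omega)\,P(\omega)\,\Phi(t,\omega)^{-1} \qquad \text{for all } t\in\mathbb{T},\, \omega \in F.
\]

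Next, since $\|P(\omega)\|$ is almost surely finite, the measurable sets $A_{M}:=\{\omega:\|P(\omega)\|\le M\}$ satisfy $\mu(A_{M})\to 1$ as $M\to\infty$, so I fix $M$ with $\mu(A_{M})>0$. Applying the Birkhoff ergodic theorem to $\mathbbm{1}_{A_{M}}$ in forward and backward time, on a $\theta$-invariant full-measure set (once again absorbed into $F$) the following holds: for every $\omega\in F$ and every $\varepsilon>0$ there is $T_{\varepsilon}(\omega)>0$ such that whenever $|t|\ge T_{\varepsilon}(\omega)$, one finds $s\in\mathbb{T}$ with $\operatorname{sgn}(s)=\operatorname{sgn}(t)$, $(1-\varepsilon)|t|\le |s|\le |t|$, and $\theta_{s}\omega\in A_{M}$. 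The point is that the time-fraction of the orbit in $A_{M}$ on $[(1-\varepsilon)t,t]$ is $\varepsilon\mu(A_{M})>0$ for $|t|$ large, so such an $s$ must exist.

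For such $s$ and $t$, applying the invariance relation with increment $t-s$ and the bounded growth condition \eqref{bounded growth condition} at the basepoints $\theta_{s}\omega$ and $\theta_{t}\omega$ yields
\[
\|P(\theta_{t}\omega)\|\le K(\theta_{s}\omega)K(\theta_{t}\omega)e^{2a|t-s|}\|P(\theta_{s}\omega)\|\le K(\theta_{s}\omega)K(\theta_{t}\omega)e^{2a\varepsilon|t|}M.
\]
Taking $\tfrac{1}{|t|}\ln$ and using temperedness of $K$ together with $|s|/|t|\in[1-\varepsilon,1]$ (so that $\tfrac{1}{|t|}\ln K(\theta_{s}\omega)=\tfrac{|s|}{|t|}\cdot\tfrac{1}{|s|}\ln K(\theta_{s}\omega)\to 0$ as $|t|\to\infty$), one concludes $\limsup_{|t|\to\infty}\tfrac{1}{|t|}\ln\|P(\theta_{t}\omega)\|\le 2a\varepsilon$. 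Letting $\varepsilon\downarrow 0$, and using $\|P(\omega)\|\ge 1$ for the nontrivial projection $P$ (so that $\ln\|P\|\ge 0$), the limit must equal zero. This is exactly the pointwise claim for $\omega\in F$, and temperedness of $\|P(\cdot)\|$ follows. The main obstacle is precisely the gap in the naive bound: the idea is to shift the basepoint of conjugation from $\omega$ to a nearby $\theta_{s}\omega$ where $\|P\|$ is already controlled, which is supplied by ergodicity on the positive-measure sublevel set $A_{M}$.
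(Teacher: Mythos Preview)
Your argument is correct and genuinely different from the paper's. The paper proceeds geometrically: it first shows by contradiction that the projective distance $\tilde d_{\mathbb{P}}(R(\cdot),N(\cdot))$ between the range and null-space is tempered, exploiting the invariance of the two subspaces under $\mathbb{P}\Phi$ together with \eqref{bounded growth condition} to rule out super-exponential collapse; it then converts this into temperedness of $\|P(\cdot)\|$ via the angle formula $\|P\|=(1-a^{2})^{-1/2}$ for oblique projections. Your route is purely ergodic-theoretic: you use only the conjugation identity $P(\theta_{t}\omega)=\Phi(t,\omega)P(\omega)\Phi(t,\omega)^{-1}$ and Birkhoff applied to the indicator of a sublevel set $\{\|P\|\le M\}$ to relocate the basepoint to some $\theta_{s}\omega$ with $|t-s|\le\varepsilon|t|$ and bounded $\|P(\theta_{s}\omega)\|$, turning the naive $2a$ bound into $2a\varepsilon$. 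This avoids any projective geometry and in fact shows more: \emph{any} invariant projector for a linear random dynamical system satisfying \eqref{bounded growth condition} has tempered norm, without reference to Morse sets. The paper's approach, on the other hand, yields the intermediate fact that the minimal projective distance between the invariant subbundles is tempered, which is of independent interest. One cosmetic point: the statement ``for any $\omega\in F$'' refers to the specific $\theta$-invariant full-measure set fixed before Theorem~\ref{morse spectrum equals dichotomy spectrum}; your absorption of the Birkhoff set into $F$ is consistent with how the paper treats $F$ elsewhere.
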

\begin{proof}
The proof is done in two steps. Note that $(R,N)=(\mathbb{P}\bigoplus_{j\in J}\mathbb{P}^{-1}M_{j},\mathbb{P}\bigoplus_{j\in I\setminus J}\mathbb{P}^{-1}M_{j})$ define invariant, compact disjoint sets.
\\
\emph{Step 1:} It holds that $\tilde{d}_{\mathbb{P}}(R(\cdot),N(\cdot)):=\inf_{x\in R(\cdot)}\inf_{y\in N(\cdot)}d_{\mathbb{P}}(x,y)$ is tempered:
\\
Assume the contrary then there exists an $\omega\in F$
such that  we have $$\liminf_{t\to \infty}\frac{1}{|t|}\ln \tilde{d}_{\mathbb{P}}(R(\theta_{t}\omega),N(\theta_{t}\omega))=-\infty.$$
Now let $t_{n}\to\infty$ be a sequence along which the $\liminf$ is attained. Then since $R$ and $N$ are $\mathbb{P}\Phi$ invariant there exist sequences $(x_{n})_{n\in\mathbb{N}}\subset R(\omega)$ and $(y_{n})_{n\in\mathbb{N}}\subset N(\omega)$ such that 
$$\lim_{n\to\infty} \frac{1}{t_{n}} \ln d_{\mathbb{P}}(\mathbb{P}\Phi(t_{n},\omega)x_{n},\mathbb{P}\Phi(t_{n},\omega)y_{n})=-\infty$$
which in turn implies that there exist sequences $\hat{x}_{n}\in \mathbb{P}^{-1}R(\omega)\cap \mathbb{S}^{d-1}$, $\hat{y}_{n}\in \mathbb{P}^{-1}N(\omega)\cap \mathbb{S}^{d-1}$, $n\in\mathbb{N}$ such that 
$$\lim_{n\to\infty} \frac{1}{t_{n}} \ln \left\|\Phi(t_{n},\omega)\left(\frac{\hat{x}_{n}}{\|\Phi(t_{n},\omega)\hat{x}_{n}\|}-\frac{\hat{y}_{n}}{\|\Phi(t_{n},\omega)\hat{y}_{n}\|}\right)\right\|=-\infty.$$
Letting $a_{n}=\frac{1}{\|\Phi(t_{n},\omega)\hat{x}_{n}\|}$ and $b_{n}=\frac{1}{\|\Phi(t_{n},\omega)\hat{y}_{n}\|}$ we decompose 
$$\|a_{n}\hat{x}_{n}-b_{n}\hat{y}_{n}\|^{2}=(a_{n}-b_{n}\langle \hat{x}_{n},\hat{y}_{n}\rangle)^{2}+b_{n}^{2}\|\hat{y}_{n}-\langle \hat{x}_{n},\hat{y}_{n}\rangle\hat{x}_{n}\|^{2}\geq C^{2}b_{n}^{2}$$
for $C^{2}=\inf_{n\in\mathbb{N}}\|\hat{y}_{n}-\langle \hat{x}_{n},\hat{y}_{n}\rangle\hat{x}_{n}\|^{2}>0$. Please note that we have that $C>0$ since $\mathbb{P}^{-1}(R(\omega))\cap \mathbb{S}^{d-1}$ and $\mathbb{P}^{-1}(N(\omega))\cap \mathbb{S}^{d-1}$ are compact.
\\
We obtain now
\begin{align*}
\|\Phi(t_{n},\omega)(a_{n}\hat{x}_{n}-b_{n}\hat{y}_{n})\| &\geq \frac{\|a_{n}\hat{x}_{n}-b_{n}\hat{y}_{n}\|}{\|\Phi(-t_{n},\theta_{t_{n}}\omega)\|}\geq  \frac{Cb_{n}}{K(\theta_{t_{n}}\omega)e^{at_{n}}}\\&\geq \frac{C}{K(\theta_{t_{n}}\omega)e^{at_{n}}\|\Phi(t_{n},\omega)\|}\geq  \frac{C}{K(\theta_{t_{n}}\omega)K(\omega)e^{2at_{n}}}.
\end{align*}
This now gives us $$-\infty=\liminf_{n\to\infty} \frac{1}{t_{n}} \ln \|\Phi(t_{n},\omega)(a_{n}\hat{x}_{n}-b_{n}\hat{y}_{n})\|\geq -2a$$
which is a contradiction and finishes the first step.
\\
\emph{Step 2:} $\|P(\cdot)\|$ is tempered:
\\
Let $\gamma(\omega):=\tilde{d}_{\mathbb{P}}(R(\omega),N(\omega))$, then by the first part $\gamma$ is tempered. Let $\omega\in F$ and $(t_{n})_{n\in\mathbb{N}}\subset\mathbb{T}$ with $\lim_{n\to\infty}t_{n}=\infty$. Then writing 
$$a_{n}=\max_{r\in \mathbb{P}^{-1}R(\theta_{t_{n}}\omega)\cap \mathbb{S}^{d-1}} \ \max_{n\in \mathbb{P}^{-1}N(\theta_{t_{n}}\omega)\cap \mathbb{S}^{d-1}}\langle r,n\rangle\in (0,1)$$
we have that (see \cite{ipsen1995angle} Theorem 3.1)
$$\|P(\theta_{t_{n}}\omega)\|=\frac{1}{\sqrt{1-a_{n}^{2}}}.$$
Now suppose that $\limsup_{n\to\infty}\frac{1}{t_{n}}\ln \|P(\theta_{t_{n}}\omega)\|=\infty$, then for the subsequence along which the $\limsup$ is attained (which we label w.l.o.g again by $(t_{n})_{n\in\mathbb{N}}$) we have that $\lim_{n\to\infty}\frac{1}{t_{n}}\ln(1-a_{n}^{2})=-\infty$ and hence also $\lim_{n\to\infty}\frac{1}{t_{n}}\ln(1-a_{n})=-\infty$ .
Since $1-a_{n}\geq \frac{\gamma(\theta_{t_{n}}\omega)^{2}}{2}$ this yields a contradiction to the temperedness of $\gamma$.
\end{proof}

The following lemma is an adaptation of \cite[Lemma~7.2]{rasmussen2007alternative} 
\begin{lemma}
\label{max/min of morse spectrum is determined}
Suppose that $\Phi$ fulfills the condition \eqref{bounded growth condition} and let $\{M_{1},...,M_{n}\}$ be the weak finest Morse decomposition for $\mathbb{P}\Phi$. Then for any $I\subset \{1,...,n\}:=J$ it holds true that 
$$\partial \Xi\left(\bigoplus_{i\in I}\mathbb{P}^{-1}M_{i}\right)(\omega)\subset \bigcup_{i\in I}\Xi(\mathbb{P}^{-1}M_{i})(\omega)$$
for any $\omega\in F$.
\end{lemma}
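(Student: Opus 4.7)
The plan is to exploit the direct sum structure $\mathbb{R}^d = \bigoplus_{i=1}^{n}\mathbb{P}^{-1}M_i(\omega)$ from Theorem~\ref{Theorem unique finest Morse decomposition} together with a pigeonhole argument on the components of a realising sequence. Set $U(\omega) := \bigoplus_{i \in I}\mathbb{P}^{-1}M_i(\omega)$; this is a $\Phi$-invariant linear subspace and $\mathbb{P}U$ is a compact invariant random set, so Theorem~\ref{basic props of morse} gives $\Xi(U)(\omega) = [a,b]$ for $\omega \in F$, hence $\partial\Xi(U)(\omega) \subseteq \{a,b\}$. By a symmetric argument, it suffices to prove $b \in \bigcup_{i \in I}\Xi(\mathbb{P}^{-1}M_i)(\omega)$.

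I fix $\omega \in F$ and choose a realising sequence $(T_k,t_k,x_k)_{k \in \mathbb{N}}$ for $b$: $T_k \to \infty$, $|T_k| \geq |t_k|$, $x_k \in U(\theta_{t_k}\omega)\setminus\{0\}$ and $\lambda^{T_k}(\theta_{t_k}\omega,x_k) \to b$. Let $P_i(\omega)$ be the invariant projection onto $\mathbb{P}^{-1}M_i(\omega)$ along $\bigoplus_{j\neq i}\mathbb{P}^{-1}M_j(\omega)$, which exists and is measurable by Lemma~\ref{measurability projections}, and decompose $x_k = \sum_{i \in I}x_k^i$ with $x_k^i := P_i(\theta_{t_k}\omega)x_k$. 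Since $\Phi(T_k,\theta_{t_k}\omega)x_k^i \in \mathbb{P}^{-1}M_i(\theta_{T_k+t_k}\omega)$ and $\|\Phi(T_k,\theta_{t_k}\omega)x_k\| \leq \sum_{i \in I}\|\Phi(T_k,\theta_{t_k}\omega)x_k^i\|$, the pigeonhole principle produces, after passing to a subsequence, a fixed index $i^* \in I$ with
\[
\|\Phi(T_k,\theta_{t_k}\omega)x_k^{i^*}\| \geq |I|^{-1}\|\Phi(T_k,\theta_{t_k}\omega)x_k\|.
\]

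Combining this with $\|x_k^{i^*}\| \leq \|P_{i^*}(\theta_{t_k}\omega)\|\cdot\|x_k\|$ yields
\[
\lambda^{T_k}(\theta_{t_k}\omega,x_k^{i^*}) \geq \lambda^{T_k}(\theta_{t_k}\omega,x_k) - \frac{\ln |I|}{T_k} - \frac{\ln\|P_{i^*}(\theta_{t_k}\omega)\|}{T_k}.
\]
The key technical step is to verify that the last term tends to zero. When $|t_k| \to \infty$ along a subsequence, I use the temperedness of $\|P_{i^*}(\cdot)\|$ from Lemma~\ref{Distance between attractor/repeller is tempered} together with $|T_k| \geq |t_k|$. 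When $(t_k)$ is bounded, I invoke the invariance identity $P_{i^*}(\theta_{t_k}\omega) = \Phi(t_k,\omega)P_{i^*}(\omega)\Phi(t_k,\omega)^{-1}$ together with the bounded growth condition~\eqref{bounded growth condition} and Lemma~\ref{invariant set bg condition}; these jointly ensure that $\|\Phi(\pm t_k,\cdot)\|$ stays bounded along bounded sequences, so $\|P_{i^*}(\theta_{t_k}\omega)\|$ is bounded and division by $T_k \to \infty$ again produces a vanishing contribution.

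Passing to a further subsequence, $\lambda^{T_k}(\theta_{t_k}\omega,x_k^{i^*})$ converges to some $b' \in \overline{\mathbb{R}}$, which by construction belongs to $\Xi(\mathbb{P}^{-1}M_{i^*})(\omega)$. The previous inequality gives $b' \geq b$, while $\mathbb{P}^{-1}M_{i^*} \subseteq U$ forces $\Xi(\mathbb{P}^{-1}M_{i^*})(\omega) \subseteq \Xi(U)(\omega) = [a,b]$, hence $b' = b$. The dual argument for $a$ is obtained via the pigeonhole choice $\|x_k^{i^*}\| \geq |I|^{-1}\|x_k\|$ together with the bound $\|\Phi(T_k,\theta_{t_k}\omega)x_k^i\| \leq \|P_i(\theta_{T_k+t_k}\omega)\|\cdot\|\Phi(T_k,\theta_{t_k}\omega)x_k\|$, where the control on $\|P_i(\theta_{T_k+t_k}\omega)\|$ uses $|T_k+t_k| \leq 2|T_k|$ in conjunction with temperedness and the bounded-growth case analysis. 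The principal obstacle throughout is this uniform sub-exponential control of the projection norms along the (possibly exotic) sequence $(t_k)$, and it is precisely here that the $|T_k| \geq |t_k|$ restriction built into the definition of the Morse spectrum and the bounded growth hypothesis~\eqref{bounded growth condition} are indispensable.
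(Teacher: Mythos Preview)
Your argument is correct. For the upper endpoint $b$ it coincides with the paper's proof: both decompose $x_k$ along the projections $P_i$, apply a pigeonhole/maximum on the components of $\Phi(T_k,\theta_{t_k}\omega)x_k$, and invoke the temperedness of $\|P_i(\cdot)\|$ from Lemma~\ref{Distance between attractor/repeller is tempered} together with $|T_k|\ge|t_k|$. Your explicit case distinction between bounded and unbounded $(t_k)$, handled via the invariance identity $P_{i^*}(\theta_{t_k}\omega)=\Phi(t_k,\omega)P_{i^*}(\omega)\Phi(t_k,\omega)^{-1}$ and Lemma~\ref{invariant set bg condition}, makes a point precise that the paper's proof passes over in one sentence.

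For the lower endpoint $a$ your route is genuinely different and shorter. The paper does \emph{not} run the dual pigeonhole; instead it fixes $\zeta=\min\bigcup_{i\in I}\Xi(\mathbb{P}^{-1}M_i)$, introduces for each $i\in I$ the auxiliary random variable
\[
H_i(\omega)=\inf\bigl\{K\ge1:\forall\,t\ge0,\ \|\Phi(-t,\omega)P_i(\omega)\|\le Ke^{-(\zeta-\varepsilon)t}\bigr\},
\]
proves separately that each $H_i$ is real-valued and tempered, and only then derives the lower bound $\|\Phi(T_k,\theta_{t_k}\omega)x_k\|\ge \tilde K(\theta_{T_k+t_k}\omega)^{-1}e^{(\zeta-\varepsilon)T_k}$. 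Your argument bypasses this construction entirely: pigeonholing on the \emph{initial} norms $\|x_k^{i^*}\|\ge|I|^{-1}\|x_k\|$ and moving the projection to the endpoint via $\Phi(T_k,\theta_{t_k}\omega)x_k^{i^*}=P_{i^*}(\theta_{T_k+t_k}\omega)\Phi(T_k,\theta_{t_k}\omega)x_k$ yields the upper bound on $\lambda^{T_k}(\theta_{t_k}\omega,x_k^{i^*})$ directly, with the control on $\|P_{i^*}(\theta_{T_k+t_k}\omega)\|$ coming from $|T_k+t_k|\le 2T_k$ and temperedness. This is more symmetric with the $b$-case and avoids the two-step real-valued/tempered verification; the paper's approach, on the other hand, produces as a by-product an explicit exponential backward estimate $\|\Phi(-t,\omega)P(\omega)\|\le \tilde K(\omega)e^{-(\zeta-\varepsilon)t}$ that is reused in the comparison with the dichotomy spectrum.
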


\begin{proof}
One has
\begin{align}
\label{Equation Maximima/Minima morse spectrum}
\Xi\left(\bigoplus_{i\in I}\mathbb{P}^{-1}M_{i}\right)(\omega)\supset \bigcup_{i\in I}\Xi(\mathbb{P}^{-1}M_{i})(\omega) \text{ for any } \omega\in F.
\end{align}
Now since for any $\omega\in F$  we have that $\Xi\left(\bigoplus_{i\in I}\mathbb{P}^{-1}M_{i}\right)(\omega)=[a,b]$ for some $a,b\in \mathbb{R}$ with $a\leq b$ (cf.~Proposition \ref{Proposition finitness} and Theorem \ref{basic props of morse}) it suffices to show that $a=\min \bigcup_{i\in I}\Xi(\mathbb{P}^{-1}M_{i}) $ and $b=\max \bigcup_{i\in I}\Xi(\mathbb{P}^{-1}M_{i}) $. For $\omega \in F$ and $i\in \{1,...,n\}$ let $P_{i}(\omega)$ be the projection with range $\mathbb{P}^{-1}M_{i}(\omega)$ and null-space $\bigoplus_{j\in J\setminus \{i\}}\mathbb{P}^{-1}M_{j}(\omega)$. Then, by Lemma \ref{Distance between attractor/repeller is tempered} there exists a tempered random variable $K:\Omega\to [1,\infty)$ such that $\|P_{i}(\omega)\|\leq K(\omega)$ for any $\omega \in F$. Let $\omega\in F$ and
assume that $b>\max \bigcup_{i\in I}\Xi(\mathbb{P}^{-1}M_{i}(\omega))$ (note that by \eqref{Equation Maximima/Minima morse spectrum} $b<\max \bigcup_{i\in I}\Xi(\mathbb{P}^{-1}M_{i}(\omega))$ is not possible) . Then there exists a sequence $\{(T_k, t_k, x_k)\}_{k \in \mathbb{N}}$  with  $T_k, t_k \in \mathbb{T}, |T_{k}|\geq |t_{k}|, x_k \in \bigoplus_{i\in I}\mathbb{P}^{-1} M_{i}(\theta_{t_k}\omega)\cap \mathbb{S}^{d-1}$ such that  $\lim_{k \to \infty} \lambda^{T_k}(\theta_{t_k}\omega, x_k) = b$. 
We obtain now for $k\in \mathbb{N}$:
\begin{align*}
    \lambda^{T_{k}}(\theta_{t_{k}}\omega,x_{k}) = & \frac{1}{T_{k}}\ln \left (\frac{\|\Phi(T_{k},\theta_{t_{k}}\omega)x_{k}\|}{\|x_{k}\|}\right)= \frac{1}{T_{k}}\ln \left (\frac{\|\sum_{i=1}^{n}\Phi(T_{k},\theta_{t_{k}}\omega) P_{i}(\theta_{t_{k}}\omega)x_{k}\|}{\|x_{k}\|}\right)  \\ &\leq  \frac{1}{T_{k}}\ln \left (\sum_{i=1}^{n}\frac{\|\Phi(T_{k},\theta_{t_{k}}\omega) P_{i}(\theta_{t_{k}}\omega)x_{k}\|}{\|x_{k}\|}\right) \\& \leq \frac{1}{T_{k}} \ln \left( \sum_{i=1,...,n,P_{i}(\theta_{t_{k}}\omega)x_{k}\neq 0}\frac{\|\Phi(T_{k},\theta_{t_{k}}\omega) P_{i}(\theta_{t_{k}}\omega)x_{k}\|}{\|P_{i}(\theta_{t_{k}}\omega)x_{k}\|/K(\theta_{t_{k}}\omega)} \right) 
     \\ & \leq \frac{1}{T_{k}} \ln \left( K(\theta_{t_{k}}\omega)n\max_{i=1,...,n,P_{i}(\theta_{t_{k}}\omega)x_{k}\neq 0}\frac{\|\Phi(T_{k},\theta_{t_{k}}\omega) P_{i}(\theta_{t_{k}}\omega)x_{k}\|}{\|P_{i}(\theta_{t_{k}}\omega)x_{k}\|}\right)\\
     &= \frac{1}{T_{k}}\ln(K(\theta_{t_{k}}\omega)n)+\max_{i=1,...,n,P_{i}(\theta_{t_{k}}\omega)x_{k}\neq 0}\lambda^{T_{k}}(\theta_{t_{k}}\omega,P_{i}(\theta_{t_{k}}\omega)x_{k})
\end{align*}
Using that $|T_{k}|\geq |t_{k}|$ for any $k\in \mathbb{N}$ and temperedness of $K(\cdot)$ we obtain a contradiction in the limit.
\\
We show now $a=\min \bigcup_{i\in I}\Xi(\mathbb{P}^{-1}M_{i}) $ and assume for contradiction  that $a<\min \bigcup_{i\in I}\Xi(\mathbb{P}^{-1}M_{i})$ (again $a>\min \bigcup_{i\in I}\Xi(\mathbb{P}^{-1}M_{i})$ is not possible because of \eqref{Equation Maximima/Minima morse spectrum}). Let $P(\omega)$ be the projection with range $\bigoplus_{i\in I}\mathbb{P}^{-1}M_{i}(\omega)$ and null-space $\bigoplus_{i\in J\setminus I}\mathbb{P}^{-1}M_{i}(\omega)$, i.e $P(\omega)=\sum_{i\in I}P_{i}(\omega)$. Choose an arbitrary $\varepsilon>0$ and let $\zeta:=\min \bigcup_{i\in I}\Xi(\mathbb{P}^{-1}M_{i})$. Define now for $i\in I$ the random-variables $H_{i}$ by $H_{i}(\omega)=1$ for $\omega\in \Omega\setminus F$
and for $\omega\in F$: $$H_{i}(\omega)=\inf \{K\geq 1: \forall t\geq 0 : \|\Phi(-t,\omega)P_{i}(\omega)\|\leq Ke^{-(\zeta-\varepsilon)t}\}.$$
Measurability of $H_{i}$ follows from completeness of $\mathcal{F}$, Lemma \ref{measurability projections} and continuity of $t\mapsto \|\Phi(-t,\omega)P_{i}(\omega)\|$. We show now that for arbitrary $i\in I$, $H_{i}$ is real valued and tempered. Suppose $H_{i}$ is not real-valued, then there exist sequences $(x_{k})_{k\in\mathbb{N}}\subset \mathbb{S}^{d-1}$ and $(t_{k})_{k\in\mathbb{N}}\subset \mathbb{T}_{+}$ such that for any $k\in \mathbb{N}$ one has $\|\Phi(-t_{k},\omega)P_{i}(\omega)x_{k}\|>ke^{-(\zeta-\varepsilon)t_{k}}$ this now implies that $\limsup_{k\to\infty}t_{k}=\infty$. By passing to a subsequence we may assume w.l.o.g that $\lim_{k\to\infty}t_{k}=\infty$. This gives us that 
$$\lambda^{t_{k}}(\theta_{-t_{k}}\omega,\Phi(-t_{k},\omega)P_{i}(\omega)x_{k})<\frac{1}{t_{k}}(\ln \|P_{i}(\omega)x_{k}\|-\ln(k)+(\zeta-\varepsilon)t_{k}) $$
as $\|P_{i}(\cdot)\|$ is tempered by Lemma~\ref{Distance between attractor/repeller is tempered} this implies that $\min \bigcup_{i\in I}\Xi(\mathbb{P}^{-1}M_{i})(\omega)\leq \zeta-\varepsilon$ which is a contradiction.\\
To the aim of a contradiction assume that $H_{i}$ is non-tempered then there exists a $\omega\in F$ such that $\limsup_{t\to\pm \infty}\frac{1}{|t|}\ln(H_{i}(\theta_{t}\omega))=\infty$. Consider a sequence $t_{n}\uparrow \infty$ such that $\limsup_{n\to \infty}\frac{1}{t_{n}}\ln(H(\theta_{t_{n}}\omega))=\infty$. Then for any $n\in\mathbb{N}$ there exists by definition of $H_{i}$ a $s_{n}\in\mathbb{T}_{+}$ such that for any $n\in\mathbb{N}$ $$\|\Phi(-s_{n},\theta_{t_{n}}\omega)P_{i}(\theta_{t_{n}}\omega)\|>\frac{1}{2}H_{i}(\theta_{t_{n}}\omega)e^{-(\zeta-\varepsilon) s_{n}}.$$
One obtains a contradiction to the growth condition and the fact that $\|P_{i}(\cdot)\|$ is tempered (cf. arguments in Lemma \ref{proof technical lemma finitness}).
\\
We have thus shown that for any $\varepsilon>0$ there exists a tempered random variable $\tilde{K}:\Omega\to [1,\infty)$ such that for any $\omega\in F$
$$\|\Phi(-t,\omega)P(\omega)\|\leq \tilde{K}(\omega)e^{-(\zeta-\varepsilon) t} \text{ for any } t\geq 0.$$ By applying similar arguments as in Lemma \ref{proof technical lemma finitness} it follows that for any $\omega\in F$,  $(\tilde{K}(\theta_{t_{n}}\omega))_{n\in\mathbb{N}}$ is bounded whenever $(t_{n})_{n\in\mathbb{N}}\subset \mathbb{T}$ is bounded. 
\\
Let now $\omega\in F$, $(T_{k})_{k\in\mathbb{N}}\subset\mathbb{T}$ be an arbitrary sequence with $T_{k}\uparrow\infty$, $(t_{k})_{k\in\mathbb{N}}\subset \mathbb{T}$, $|T_{k}|\geq |t_{k}|$ and $(x_{k})_{k\in\mathbb{N}}\subset \mathbb{S}^{d-1}$ with $x_{k}\in \mathbb{P}^{-1}M(\theta_{t_{k}}\omega)\setminus \{0\}$, then for any $k\in\mathbb{N}$ one has that (recall that $F$ is $\theta$-invariant.)
\begin{align*}
\|x_{k}\|= & \|\Phi(-T_{k},\theta_{t_{k}+T_{k}}\omega)\Phi(T_{k},\theta_{t_{k}}\omega)P(\theta_{t_{k}}\omega)x_{k}\| \\ &=\|\Phi(-T_{k},\theta_{t_{k}+T_{k}}\omega)P(\theta_{t_{k}+T_{k}}\omega)\Phi(T_{k},\theta_{t_{k}}\omega)x_{k}\| \\ &\leq  \tilde{K}(\theta_{T_{k}+t_{k}}\omega)e^{-(\zeta-\varepsilon)T_{k}}\|\Phi(T_{k},\theta_{t_{k}}\omega)x_{k}\|
\end{align*}
So $$\|\Phi(T_{k},\theta_{t_{k}}\omega)x_{k}\|\geq \frac{e^{(\zeta-\varepsilon) T_{k}}}{\tilde{K}(\theta_{t_{k}+T_{k}}\omega)}$$

This shows that $a\geq \zeta-\varepsilon$. Since $\varepsilon>0$ was arbitrary this finishes the proof. 
\end{proof}

We now prove the main statement, namely that $\Xi_{w}=\Sigma^{\prime}$ if \eqref{bounded growth condition} holds. For clarity we divide this proof into two propositions and one lemma. The proofs are an adaptation of \cite[Theorem~7.39]{rasmussen2007alternative}. The first proposition shows that one has $\Xi_{w} \subset \Sigma^{\prime}$.
\begin{proposition}
\label{morse spectrum contained in dichotomy spectrum}
    Suppose that \eqref{bounded growth condition} holds, then $\Xi_{w}\subset \Sigma^{\prime}$ almost surely.
\end{proposition}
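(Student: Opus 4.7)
The plan is to argue by contrapositive: if $\gamma\in\overline{\mathbb{R}}$ is such that $\Phi$ admits a non-uniform exponential dichotomy with growth rate $\gamma$, I will show $\gamma\notin \Xi_w(\omega)$ almost surely. Let $P_\gamma$, $\alpha>0$ and a tempered $K:\Omega\to[1,\infty)$ be the dichotomy data. Using the argument of Lemma~\ref{invariant set bg condition} one may, at the cost of an arbitrarily small decrease in $\alpha$, replace $K$ by a tempered random variable (still denoted $K$) which is additionally bounded along bounded orbits on a $\theta$-invariant full-measure set $F$. Applying Theorem~\ref{structure invariant manifolds}, $(A,R):=(\mathbb{P}\mathcal{N}(P_\gamma),\mathbb{P}\mathcal{R}(P_\gamma))$ is a weak attractor-repeller pair (the trivial cases $P_\gamma=0$ or $\mathbbm{1}$ being handled directly by a single dichotomy bound).

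Since $\{M_1,\dots,M_n\}$ is the unique finest weak Morse decomposition, it refines the (at most two-element) Morse decomposition $\{A,R\}$; hence, almost surely, for each $i$ one has either $\mathbb{P}^{-1}M_i(\omega)\subset \mathcal{N}(P_\gamma)(\omega)$ or $\mathbb{P}^{-1}M_i(\omega)\subset \mathcal{R}(P_\gamma)(\omega)$. Fix $\omega\in F$ (intersected with this full-measure set), fix $i$, and let $\{(T_k,t_k,x_k)\}$ be a test sequence from the definition of $\Xi(\mathbb{P}^{-1}M_i)(\omega)$ with $\lambda^{T_k}(\theta_{t_k}\omega,x_k)\to\xi$. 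In the first case, $P_\gamma(\theta_{t_k}\omega)x_k=x_k$, so the forward dichotomy estimate yields
\[
\lambda^{T_k}(\theta_{t_k}\omega,x_k)\;\leq\; \gamma-\alpha+\frac{1}{T_k}\ln K(\theta_{t_k}\omega).
\]
In the second case, $(\mathbbm{1}-P_\gamma)(\theta_{t_k}\omega)x_k=x_k$; applying invariance of $P_\gamma$ and the backward dichotomy estimate at the base point $\theta_{t_k+T_k}\omega$ over time $-T_k$ gives
\[
\|x_k\|\;\leq\; K(\theta_{t_k+T_k}\omega)\,e^{-(\gamma+\alpha)T_k}\,\|\Phi(T_k,\theta_{t_k}\omega)x_k\|,
\]
so
\[
\lambda^{T_k}(\theta_{t_k}\omega,x_k)\;\geq\; \gamma+\alpha-\frac{1}{T_k}\ln K(\theta_{t_k+T_k}\omega).
\]

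To conclude, I would show that in either case the $K$-remainder tends to zero along $k\to\infty$. After passing to a subsequence, $t_k$ either tends to $\pm\infty$ or stays bounded; likewise for $t_k+T_k$ (noting $|t_k+T_k|\leq 2|T_k|$ because $|T_k|\geq|t_k|$). In the unbounded case temperedness of $K$ combined with $|t_k|\leq |T_k|$ (respectively $|t_k+T_k|\leq 2|T_k|$) forces $\tfrac{1}{T_k}\ln K(\theta_{t_k}\omega)\to 0$ (respectively $\tfrac{1}{T_k}\ln K(\theta_{t_k+T_k}\omega)\to 0$); in the bounded case this follows from boundedness of $K$ along bounded orbits on $F$. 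Therefore $\xi\leq \gamma-\alpha$ or $\xi\geq \gamma+\alpha$, so $\gamma\notin \Xi(\mathbb{P}^{-1}M_i)(\omega)$ for any $i$, and hence $\gamma\notin \Xi_w(\omega)$.

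The main obstacle is precisely this last passage to the limit of the $K$-term: the constraint $|T_k|\geq |t_k|$ built into the definition of $\Xi$ is exactly what is needed to dominate $|t_k|/T_k$ by $1$, and the refined boundedness of $K$ along bounded orbits (from Lemma~\ref{invariant set bg condition}) is exactly what is needed to handle the residual case when $t_k$ or $t_k+T_k$ does not escape to infinity. Everything else reduces to invoking the already-established refinement structure of the finest Morse decomposition and the direct application of the dichotomy inequalities.
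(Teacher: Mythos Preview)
Your proof is correct and follows essentially the same route as the paper's: both arguments use that the finest weak Morse decomposition refines the Morse decomposition induced by the dichotomy split (Theorem~\ref{structure invariant manifolds}), place each $M_i$ inside either the range or the null-space of the relevant projector, and then apply the forward/backward dichotomy estimates together with temperedness of $K$ (and the bounded-along-bounded-orbits refinement from Lemma~\ref{invariant set bg condition}) to squeeze the finite-time exponents away from the resolvent point. The only cosmetic difference is that the paper routes through the full spectral decomposition $\{W_1,\dots,W_m\}$, whereas you work directly with the single attractor-repeller pair $(\mathbb{P}\mathcal{N}(P_\gamma),\mathbb{P}\mathcal{R}(P_\gamma))$; one small slip to fix is that your ``first case'' and ``second case'' are swapped relative to the order in which you listed the alternatives $\mathcal{N}(P_\gamma)$ and $\mathcal{R}(P_\gamma)$.
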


\begin{proof}
Let $\{W_{1},...,W_{m}\}$ be the decomposition into the spectral manifolds of the dichotomy spectrum. Since by Theorem \ref{structure invariant manifolds} $\{\mathbb{P}W_{1},...,\mathbb{P}W_{m}\}$ defines in particular a weak Morse decomposition there exists for any $i\in \{1,...,n\}$ a $j\in \{1,...,m\}$ with $M_{i}\subset \mathbb{P}W_{j}$ almost surely. Let $\mu \in \Xi(\omega)$ for some $\omega\in F$ then, by Theorem \ref{basic props of morse} and $\theta$-invariance of $F$, for any $\omega\in F$ there exists $i\in \{1,...,n\}$ and sequences $T_{k}\uparrow \infty$, $(t_{k},x_{k})\in \mathbb{T}\times \mathbb{P}^{-1}M_{i}(\theta_{t_{k}}\omega)\setminus \{0\} $ with $|T_{k}|\geq |t_{k}|$ for $k\in\mathbb{N}$ such that $\lim_{k\to\infty} \lambda^{T_{k}}(\theta_{t_{k}}\omega,x_{k})=\mu$. Choose $j\in \{1,...,m\}$, such that $M_{i}\subset \mathbb{P}W_{j}$ almost surely. Note that since $M_{i}$ and $\mathbb{P}W_{j}$ are $\mathbb{P}\Phi$ invariant, the set $B\in \mathcal{F}$ of $\omega\in \Omega$ fulfilling  $M_{i}(\omega)\subset \mathbb{P}W_{j}(\omega)$ is $\theta$-invariant. Let $[a_{j},b_{j}]$ be the spectral interval of $W_{j}$. Assume that $\mu \not\in \Sigma^{\prime}$.  By assumption there exists an invariant projector $P_{\gamma_{j}},$ a tempered random variable $K:\Omega\to [1,\infty)$, an $\alpha>0$ and a $\theta$-invariant full-measure set $\tilde{F}\in \mathcal{F}$ such that for any $\omega\in \tilde{F}$:
 \begin{align}
 \label{eq:positive time}
     \|\Phi(t,\omega)P_{\gamma_{j}}(\omega)\|\leq K(\omega)e^{(\mu-\alpha)t} \text{ for } t\geq 0
 \end{align}
 and 
 \begin{align}
 \label{eq:negative time}
     \|\Phi(-t,\omega)(\mathbbm{1}-P_{\gamma_{j}}(\omega))\|\leq K(\omega)e^{-(\mu+\alpha)t} \text{ for } t\geq 0.
 \end{align}
 Moreover we may assume that for $\omega\in \tilde{F}$, $(K(\theta_{t_{n}}\omega))_{n\in \mathbb{N}}$ is bounded whenever $(t_{n})_{n\in\mathbb{N}}\subset \mathbb{T}$ is bounded.
 (compare arguments in Lemma \ref{proof technical lemma finitness})
 Let now $\hat{F}=F\cap \tilde{F}\cap B$ and $\omega \in \hat{F}$.
 \\
 Suppose first $\mu >b_{j}$.
 Because of $\mathbb{P}^{-1}M_{i}(\omega)\subset W_{j}(\omega)\subset \mathcal{R}(P_{\gamma_{j}}(\omega))$, \eqref{eq:positive time} implies that $\lim_{k\to\infty} \lambda^{T_{k}}(\theta_{t_{k}}\omega,x_{k})\leq\mu-\alpha$ which is a contradiction. Now suppose 
 that $\mu <a_{j}$. Then as  $\mathbb{P}^{-1}M_{i}(\omega)\subset W_{j}(\omega)\subset \mathcal{N}(P_{\gamma_{j-1}}(\omega))\subset \mathcal{N}(P_{\gamma_{j}}(\omega)) $ we obtain for any  $k\in\mathbb{N}$ (recall that $\hat{F}$ is $\theta$-invariant)
 \begin{align*}
\|x_{k}\| &=\|\Phi(-T_{k},\theta_{t_{k}+T_{k}}\omega)\Phi(T_{k},\theta_{t_{k}}\omega)(\mathbbm{1}-P_{\gamma_{j}}(\theta_{t_{k}}\omega))x_{k}\| \\ &= \|\Phi(-T_{k},\theta_{t_{k}+T_{k}}\omega) (\mathbbm{1}-P_{\gamma_{j}}(\theta_{t_{k}+T_{k}}\omega))\Phi(T_{k},\theta_{t_{k}}\omega)x_{k}\| \\ & \leq
K(\theta_{t_{k}+T_{k}}\omega)e^{-(\mu+\alpha)T_{k}}\|\Phi(T_{k},\theta_{t_{k}}\omega)x_{k}\|
 \end{align*}
 So for $k\in\mathbb{N}$:
 $$\|x_{k}\|K(\theta_{t_{k}+T_{k}}\omega)^{-1}e^{(\mu+\alpha)T_{k}}\leq \|\Phi(T_{k},\theta_{t_{k}}\omega)x_{k}\|$$
 which  implies $\lim_{k\to\infty} \lambda^{T_{k}}(\theta_{t_{k}}\omega,x_{k})\geq \mu+\alpha$, yielding a contradiction. 
\end{proof}
 
\begin{lemma}
\label{max/min of morse spectrum is given by dichotomy}
Suppose that $\Phi$ fulfills the condition \eqref{bounded growth condition}. Then  $\Xi(\mathbb{R}^{d})=[\min \Sigma^{\prime}, \max \Sigma^{\prime}]$ almost surely. In particular $\min \Xi_{w}=\min \Sigma^{\prime}$ and $\max \Xi_{w}=\max \Sigma^{\prime}$ almost surely. 
\end{lemma}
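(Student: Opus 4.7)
My plan is to first use results already established to show $\{a,b\} \subset \Sigma'$, where $\Xi(\mathbb{R}^{d})(\omega) = [a,b]$ is the almost-sure interval provided by Theorem \ref{basic props of morse}, and then to construct trivial exponential dichotomies outside $[a,b]$ to prove the reverse inclusion $\Sigma' \subset [a,b]$. Taking $I = \{1,\ldots,n\}$ in Lemma \ref{max/min of morse spectrum is determined} yields $\{a,b\} = \partial \Xi(\mathbb{R}^{d})(\omega) \subset \bigcup_{i=1}^{n} \Xi(\mathbb{P}^{-1}M_{i})(\omega) = \Xi_{w}(\omega)$ for $\omega \in F$, and Proposition \ref{morse spectrum contained in dichotomy spectrum} gives $\Xi_{w} \subset \Sigma'$. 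Hence $\{a,b\} \subset \Sigma'$, so that $\min\Sigma' \leq a$ and $b \leq \max\Sigma'$.

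For the reverse inclusion, I would show that every $\mu > b$, and, by a time-reversed argument, every $\mu < a$, satisfies $\mu \notin \Sigma'$. I focus on the former. Since \eqref{bounded growth condition} forces $\Xi(\mathbb{R}^{d})$ to be finite by Proposition \ref{Proposition finitness}, both $a,b$ are finite and it suffices to consider $\mu \in \mathbb{R}$. Fixing $\alpha > 0$ with $\mu - \alpha > b$, my aim is to produce a tempered random variable $\tilde{K}:\Omega \to [1,\infty)$ such that $\|\Phi(t,\omega)\| \leq \tilde{K}(\omega)e^{(\mu-\alpha)t}$ for all $t \geq 0$ on a full-measure set. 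An exponential dichotomy with growth rate $\mu$ and trivial projector $P_{\mu}(\omega) = \mathbbm{1}$ then follows immediately, the condition on $\mathbbm{1} - P_{\mu}$ being vacuous. I define
\[
\tilde{K}(\omega) := \inf\bigl\{K \geq 1 : \|\Phi(t,\omega)\| \leq K e^{(\mu-\alpha)t}\ \text{for all}\ t \geq 0\bigr\}
\]
on $F$ and extend by $1$ elsewhere; measurability follows exactly as in Proposition \ref{Proposition finitness}.

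The main obstacle is showing that $\tilde{K}$ is real-valued and tempered. Real-valuedness: if $\tilde{K}(\omega) = \infty$, there exist $t_{k} \uparrow \infty$ and $x_{k} \in \mathbb{S}^{d-1}$ with $\|\Phi(t_{k},\omega)x_{k}\| > k e^{(\mu-\alpha)t_{k}}$, so the Morse definition with offset zero yields a limit point $\xi \geq \mu - \alpha$ in $\Xi(\mathbb{R}^{d})(\omega)$, contradicting $\xi \leq b < \mu - \alpha$. Temperedness, mirroring the strategy of Proposition \ref{Proposition finitness}, is the delicate part: suppose $\omega \in F$ and $s_{n} \uparrow \infty$ satisfy $\frac{1}{s_{n}}\ln\tilde{K}(\theta_{s_{n}}\omega) \to \infty$, and choose $t_{n} \geq 0$, $x_{n} \in \mathbb{S}^{d-1}$ with $\|\Phi(t_{n},\theta_{s_{n}}\omega)x_{n}\| > \tfrac{1}{2}\tilde{K}(\theta_{s_{n}}\omega)e^{(\mu-\alpha)t_{n}}$. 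If $t_{n} \geq s_{n}$ along a subsequence, I set $\tilde{x}_{n} := \Phi(-s_{n},\theta_{s_{n}}\omega)x_{n}$; then the constraint $|t_{n}| \geq |s_{n}|$ holds and $\tilde{\lambda}(t_{n},s_{n},\omega,\tilde{x}_{n}) = \frac{1}{t_{n}}\ln\|\Phi(t_{n},\theta_{s_{n}}\omega)x_{n}\| > (\mu-\alpha) + \frac{\ln(\tilde{K}(\theta_{s_{n}}\omega)/2)}{t_{n}}$, producing a limit point $\xi \geq \mu - \alpha$ in $\Xi(\mathbb{R}^{d})(\omega)$, a contradiction. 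If instead $t_{n} < s_{n}$ for all large $n$, the cocycle identity $\Phi(t_{n},\theta_{s_{n}}\omega) = \Phi(s_{n}+t_{n},\omega)\Phi(-s_{n},\theta_{s_{n}}\omega)$ combined with \eqref{bounded growth condition} gives
\[
\tilde{K}(\theta_{s_{n}}\omega) \leq 2K(\omega)K(\theta_{s_{n}}\omega)e^{2as_{n} + (a-\mu+\alpha)t_{n}},
\]
and temperedness of $K$ with $t_{n}/s_{n} \in [0,1)$ forces $\limsup_{n\to\infty}\frac{1}{s_{n}}\ln\tilde{K}(\theta_{s_{n}}\omega) < \infty$, contradicting the assumption.

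Once both reverse inclusions are established (the case $\mu < a$ by a time-reversed argument producing a dichotomy with $P_{\mu} = 0$), it follows that $\Sigma' \subset [a,b]$, and combined with $\{a,b\} \subset \Sigma'$ this gives $\min\Sigma' = a$ and $\max\Sigma' = b$, hence $\Xi(\mathbb{R}^{d}) = [\min\Sigma',\max\Sigma']$ almost surely. The final ``in particular'' statement then follows at once: $\{a,b\} \subset \Xi_{w}$ gives $\min\Xi_{w} \leq a = \min\Sigma'$ and $\max\Xi_{w} \geq b = \max\Sigma'$, while $\Xi_{w} \subset \Sigma'$ from Proposition \ref{morse spectrum contained in dichotomy spectrum} gives the opposite inequalities.
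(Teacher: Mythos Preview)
Your proposal is correct and follows essentially the same approach as the paper: both arguments use Lemma~\ref{max/min of morse spectrum is determined} together with Proposition~\ref{morse spectrum contained in dichotomy spectrum} for one inequality, and for the other construct the random variable $\tilde K(\omega)=\inf\{K\ge 1:\|\Phi(t,\omega)\|\le Ke^{(\mu-\alpha)t}\text{ for all }t\ge 0\}$ and verify that it is real-valued and tempered, thereby producing a trivial exponential dichotomy. The only cosmetic difference is that the paper frames the second part as a contradiction at $\mu=\max\Sigma'$ rather than a direct argument for all $\mu>b$.
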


\begin{proof}
 It will only be shown that $\max \Xi(\mathbb{R}^{d})=\max \Sigma^{\prime}$ since $\min \Xi(\mathbb{R}^{d})=\min \Sigma^{\prime}$ can be proven similarly. Assume to the aim of a contradiction that $\max \Xi(\mathbb{R}^{d})< \max \Sigma^{\prime}$ almost surely (note that $\max \Xi(\mathbb{R}^{d})> \max \Sigma^{\prime}$ is not possible due to Proposition \ref{morse spectrum contained in dichotomy spectrum} and Lemma \ref{max/min of morse spectrum is determined}).
Then in particular there exists an $\alpha>0$ such that the $\theta$-invariant set $\tilde{F}\in \mathcal{F}$ where \\ $\limsup_{t\to\infty} \frac{1}{t} \ln \|\Phi(t,\omega)\|< \max \Sigma^{\prime}-\alpha$ has full-measure. Let $\hat{F}:=F\cap \tilde{F}$. Finally, define 
$$ H(\omega )= \begin{cases} \inf \{K\geq 1: \forall t\geq0: \ \|\Phi(t,\omega)\|\leq Ke^{(\max \Sigma^{\prime}-\alpha) t}\}  & \text{if } \omega \in \hat{F} \\ 1 \text{ else. } \end{cases} $$
Note that $H$ is measurable as $\mathcal{F}$ is complete and $\mathbb{T}\to \mathbb{R}$, $t\mapsto \|\Phi(t,\omega)\|$ is continuous. 
\\
From $\limsup_{t\to\infty} \frac{1}{t} \ln \|\Phi(t,\omega)\|< \max \Sigma^{\prime}-\alpha$ for all $\omega\in \hat{F}$ and \eqref{bounded growth condition} we obtain that $H$ is real-valued and that $H$ is tempered (cf. arguments made in Lemma \ref{max/min of morse spectrum is determined}). This, however, implies a contradiction.
\end{proof}

\begin{proposition}
\label{dichotomy is contained in Morse}
Suppose that $\Phi$ fulfills the condition \eqref{bounded growth condition}. Then it holds that $\Xi_{w}\supset\Sigma^{\prime}$ almost surely. 
\end{proposition}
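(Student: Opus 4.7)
My plan is to argue by contradiction: assume there exists $\gamma \in \Sigma' \setminus \Xi_w$ and construct a non-uniform exponential dichotomy for $\Phi$ at growth rate $\gamma$, contradicting $\gamma \in \Sigma'$. I begin by establishing the compatibility between the spectral and Morse decompositions. Let $\{W_1, \ldots, W_m\}$ be the spectral manifolds with intervals $[a_1, b_1] < \cdots < [a_m, b_m]$. As in the proof of Proposition~\ref{morse spectrum contained in dichotomy spectrum}, each $M_i$ lies in some $\mathbb{P}W_{j(i)}$; with $I_j := \{i : j(i) = j\}$, comparing the Whitney sums $\bigoplus_i \mathbb{P}^{-1}M_i = \mathbb{R}^d = \bigoplus_j W_j$ forces $W_j = \bigoplus_{i \in I_j}\mathbb{P}^{-1}M_i$ almost surely. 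Applying Lemma~\ref{max/min of morse spectrum is given by dichotomy} to the restriction $\Phi|_{W_j}$, whose dichotomy spectrum is the single interval $[a_j, b_j]$ and whose bounded growth condition is inherited from $\Phi$, yields $\Xi(W_j) = [a_j, b_j]$; Lemma~\ref{max/min of morse spectrum is determined} then forces the endpoints $\{a_j, b_j\} \subset \bigcup_{i \in I_j}\Xi(\mathbb{P}^{-1}M_i) \subset \Xi_w$, so the endpoints of every spectral interval already belong to $\Xi_w$.

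Now take $\gamma \in \Sigma' \setminus \Xi_w$. Then $\gamma \in [a_j, b_j]$ for some $j$, and the previous step forces $a_j < \gamma < b_j$. Since $\Xi_w$ is a finite union of closed intervals (Theorem~\ref{basic props of morse}), there is $\epsilon > 0$ with $(\gamma - \epsilon, \gamma + \epsilon) \cap \Xi_w = \emptyset$. Writing $\Xi(\mathbb{P}^{-1}M_i) = [c_i, d_i]$, I partition $I_j = I_j^- \sqcup I_j^+$ according to whether $d_i < \gamma - \epsilon$ or $c_i > \gamma + \epsilon$, and set $W_j^\pm := \bigoplus_{i \in I_j^\pm}\mathbb{P}^{-1}M_i$, so $W_j = W_j^- \oplus W_j^+$ with each summand $\Phi$-invariant. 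Lemma~\ref{max/min of morse spectrum is determined}, applied with $I = I_j^\pm \subset \{1, \ldots, n\}$, then gives $\max \Xi(W_j^-) < \gamma$ and $\min \Xi(W_j^+) > \gamma$.

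The technical heart of the argument — the step I expect to be the main obstacle — is to promote these bounds on limits of finite-time Lyapunov exponents into tempered uniform exponential bounds. Mirroring the construction of $H_i$ in the proof of Lemma~\ref{max/min of morse spectrum is determined}, I would fix $\alpha \in (0, \epsilon/2)$ and define
$$H^-(\omega) := \inf\{K \geq 1 : \|\Phi(t,\omega) P^-(\omega)\| \leq K e^{(\gamma - \alpha)t}\text{ for all }t \geq 0\},$$
where $P^-$ is the projection onto $W_j^-$ along $\bigoplus_{i \in \{1,\ldots,n\}\setminus I_j^-}\mathbb{P}^{-1}M_i$ (tempered in norm by Lemma~\ref{Distance between attractor/repeller is tempered}), and analogously $H^+$ giving a backward-time bound on $W_j^+$. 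Real-valuedness of $H^\pm$ on a $\theta$-invariant full-measure set follows from $\max \Xi(W_j^-) < \gamma - \epsilon$; temperedness is obtained by the same contradiction-via-sequences argument used in Lemmas~\ref{invariant set bg condition} and~\ref{max/min of morse spectrum is determined}, since failure of temperedness would produce a sequence $(T_k, t_k, x_k)$ with $x_k \in W_j^- \cap \mathbb{S}^{d-1}$ whose limiting finite-time Lyapunov exponent exceeds $\max \Xi(W_j^-)$.

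To complete the dichotomy, I choose $\gamma' \in (b_{j-1}, a_j)$ and $\gamma'' \in (b_j, a_{j+1})$ (with the obvious modifications if $j = 1$ or $j = m$); since $\gamma', \gamma'' \notin \Sigma'$, the induced dichotomies yield tempered estimates $\|\Phi(t,\omega)|_{W_{<j}}\| \leq \tilde{K}(\omega) e^{(\gamma - \alpha')t}$ for $t \geq 0$ and $\|\Phi(-t, \omega)|_{W_{>j}}\| \leq \tilde{K}(\omega) e^{-(\gamma + \alpha')t}$ for $t \geq 0$, where $W_{<j} = \bigoplus_{j' < j} W_{j'}$ and $W_{>j} = \bigoplus_{j' > j} W_{j'}$. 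Combining these with the bounds provided by $H^\pm$ on $W_j^\pm$ and defining the invariant projector $P_\gamma$ to have range $W_{<j} \oplus W_j^-$ and null space $W_j^+ \oplus W_{>j}$ (tempered in norm by Lemma~\ref{Distance between attractor/repeller is tempered}) yields a non-uniform exponential dichotomy for $\Phi$ at growth rate $\gamma$. This contradicts $\gamma \in \Sigma'$ and completes the proof.
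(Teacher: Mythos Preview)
Your argument is correct but takes a considerably more elaborate route than the paper's. The paper bypasses the spectral decomposition $\{W_j\}$ entirely: given $\mu \in [\min\Sigma', \max\Sigma'] \setminus \Xi_w$ (the range coming from Lemma~\ref{max/min of morse spectrum is given by dichotomy} applied once, to the full system), it simply relabels the Morse sets so that $\max\bigcup_{i<j}[a_i,b_i] < \mu < \min\bigcup_{i\geq j}[a_i,b_i]$, takes $P$ to be the projection onto $\bigoplus_{i<j}\mathbb{P}^{-1}M_i$ along $\bigoplus_{i\geq j}\mathbb{P}^{-1}M_i$, and shows directly---via the same $H$-construction and real-valued/tempered argument you outline for $H^\pm$---that $P$ realises a non-uniform dichotomy at $\mu$. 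Your detour through the spectral manifolds (establishing $W_j = \bigoplus_{i\in I_j}\mathbb{P}^{-1}M_i$, showing the endpoints $a_j,b_j\in\Xi_w$, splitting only $I_j$, and then handling $W_{<j}$, $W_{>j}$ via pre-existing dichotomies at auxiliary points $\gamma',\gamma''$) ultimately assembles the \emph{same} projector, but at the cost of invoking the spectral theorem for $\Sigma'$, checking that Lemma~\ref{max/min of morse spectrum is given by dichotomy} transfers to the restricted cocycle $\Phi|_{W_j}$ (which needs a tempered trivialisation of the random subspace $W_j$), and combining bounds from three separate sources rather than one. What your route does buy is the intermediate structural information that the endpoints of every spectral interval already lie in $\Xi_w$ and that the Morse and spectral decompositions are compatible; the paper obtains $\Sigma'\subset\Xi_w$ without isolating either of these facts.
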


\begin{proof}
 If $\Xi_{w}(\omega)=[\min \Sigma^{\prime}, \max \Sigma^{\prime}]$ for every $\omega\in F$, there is nothing to show. Suppose this is not the case and let $\mu\not \in \Xi_{w}\cap [\min \Sigma^{\prime}, \max \Sigma^{\prime}] $. Consider the unique finest weak Morse decomposition $\{M_{1},...,M_{n}\}$ and let $[a_{i},b_{i}]=\Xi(\mathbb{P}^{-1}M_{i})(\omega)$  for $\omega\in F$, $i\in \{1,...,n\}$ with $b_{0}=-\infty$ and $a_{n+1}=\infty$. By relabeling the Morse sets we may choose a $j\in \{2,...,n\}$ so that $\mu>b_{j-1}$, $\mu<a_{j}$  and $\max \bigcup_{i=1}^{j-1}[a_{i},b_{i}]<\mu <\min \bigcup_{i=j}^{n}[a_{i},b_{i}]$. We define a projector $P(\omega)$ with range $\mathbb{P}^{-1}M_{1}(\omega)\oplus...\oplus \mathbb{P}^{-1}M_{j-1}(\omega)$ and null space $\mathbb{P}^{-1}M_{j}(\omega)\oplus...\oplus \mathbb{P}^{-1}M_{n}(\omega)$ (note that this is measurable by Lemma \ref{measurability projections}). Moreover by Lemma \ref{Distance between attractor/repeller is tempered} it holds that $\|P(\cdot)\|\leq \tilde{K}(\cdot)$ for some tempered random variable $\tilde{K}:\Omega \to [1,\infty)$. Assume w.l.o.g that $\lim_{t\to\pm \infty}\frac{1}{|t|}\ln K(\theta_{t}\omega)=0$ for all $\omega\in F$.
\\
Consider for fixed  $\delta>b_{j-1}$ and $\eta<a_{j}$ the random variable $H:\Omega\to \overline{\mathbb{R}}$ defined by $H(\omega)=1$ for $\omega\in \Omega\setminus F$ and for $\omega\in F$:
$$ H(\omega )=  \inf \{K\geq 1: \forall t\geq0: \ \|\Phi(t,\omega)P(\omega)\|\leq Ke^{\delta t} \text{ and } \|\Phi(-t,\omega)(\mathbbm{1}-P(\omega))\|\leq Ke^{-\eta t}\}. $$
One can see that $H$ is measurable since $\mathcal{F}$ is complete, $\mathbb{T}\to \mathbb{R}$, $t\mapsto \|\Phi(t,\omega)P(\omega)\|$ is continuous, and $\omega\mapsto \|\Phi(t,\omega)P(\omega)\|$ is measurable (compare Lemma \ref{measurability projections}) .
\\
It suffices to show that $H$ is tempered and real-valued to show that $\mu\not\in \Sigma^{\prime}$. This is done in two steps:
\\
\emph{Step 1: $H$ is real-valued:}
\\
We argue by contradiction. Assume first  there exists a $\omega\in F$ and sequences $(x_{n})_{n\in \mathbb{N}}\subset \mathbb{S}^{d-1}$, $(t_{n})_{n\in \mathbb{N}}\subset \mathbb{R}_{+}$ such that for any $n\in \mathbb{N}$
$$\|\Phi(t_{n},\omega)P(\omega)x_{n}\|>ne^{\delta t_{n}}$$
this implies that $\limsup_{n\to\infty}t_{n}=\infty$.
We obtain $$\lambda^{t_{n}}(\omega,P(\omega)x_{n})=\frac{1}{t_{n}}\ln \frac{\|\Phi(t_{n},\omega)P(\omega)x_{n}\|}{\|P(\omega)x_{n}\|}\geq \frac{1}{t_{n}}(\ln(n)+\delta t_{n}-\ln(\tilde{K}(\omega)))$$
but this implies that $\limsup_{n\to\infty}\lambda^{t_{n}}(\omega,P(\omega)x_{n})\geq \delta>b_{j-1}$,
which in turn implies that $\sup \Xi(\mathbb{P}^{-1}M_{1}\oplus...\oplus \mathbb{P}^{-1}M_{j-1})\geq \delta >b_{j-1}$  which is a contradiction to Lemma \ref{max/min of morse spectrum is determined}.
\\
Assume similarly that there exists a sequence $(x_{n})_{n\in\mathbb{N}}\subset \mathbb{S}^{d-1}$ and $(t_{n})_{n\in\mathbb{N}}\subset \mathbb{R}_{+}$ such that for any $n\in\mathbb{N}$
$$\|\Phi(-t_{n},\omega)(\mathbbm{1}-P(\omega))x_{n}\|>ne^{-\eta t_{n}}.$$
Again it follows that $\limsup_{n\to\infty}t_{n}=\infty$. Letting $y_{n}=\Phi(-t_{n},\omega)(\mathbbm{1}-P(\omega))x_{n}\in \bigoplus_{i=j}^{n}\mathbb{P}^{-1}M_{i}\setminus \{0\}$ 
we get 
$$\lambda^{t_{n}}(\theta_{-t_{n}}\omega,y_{n})=\frac{1}{t_{n}}\ln \frac{\|(\mathbbm{1}-P(\omega))x_{n}\|}{\|y_{n}\|}\leq \frac{1}{t_{n}}(-\ln(n)+t_{n}\delta+\ln(\tilde{K}(\omega)))$$
 so $\liminf_{n\to\infty} \lambda^{t_{n}}(\theta_{-t_{n}}\omega,y_{n})\leq \eta<a_{j}$.
This implies $\inf \Xi(\mathbb{P}^{-1}M_{j}\oplus...\oplus \mathbb{P}^{-1}M_{n})\leq \eta <b_{j-1}$ which is again a contradiction to Lemma \ref{max/min of morse spectrum is determined}.
\\
\emph{Step 2: $H$ is tempered:}
\\
To the aim of a contradiction assume that $H$ is non-tempered then there exists a $\omega\in F$ such that $\limsup_{t\to\pm \infty}\frac{1}{|t|}\ln(H(\theta_{t}\omega))=\infty$. Now consider a sequence $t_{n}\uparrow \infty$ such that $\limsup_{n\to \infty}\frac{1}{t_{n}}\ln(H(\theta_{t_{n}}\omega))=\infty$ . Then for any $n\in\mathbb{N}$ there exists by definition of $H$ a $s_{n}\in\mathbb{T}_{+}$ such that for any $n\in\mathbb{N}$ 
\begin{align}
\label{eq: temperedness Prop Morse contains dichotomy pos time}
\|\Phi(s_{n},\theta_{t_{n}}\omega)P(\theta_{t_{n}}\omega)\|>\frac{1}{2}H(\theta_{t_{n}}\omega)e^{\delta s_{n}}
\end{align}
or 
\begin{align}
\label{eq: temperedness Prop Morse contains dichotomy neg time}
\|\Phi(-s_{n},\theta_{t_{n}}\omega)(\mathbbm{1}-P(\theta_{t_{n}}\omega))\|>\frac{1}{2}H(\theta_{t_{n}}\omega)e^{-\eta s_{n}}.
\end{align}
We only consider the case that \eqref{eq: temperedness Prop Morse contains dichotomy pos time} holds for infinitely many $n\in\mathbb{N}$ as the case that \eqref{eq: temperedness Prop Morse contains dichotomy pos time} holds for infinitely many $n\in\mathbb{N}$ can be treated analogously. Suppose w.l.o.g that \eqref{eq: temperedness Prop Morse contains dichotomy pos time} holds for any $n\in\mathbb{N}$. If $|s_{n}|\geq |t_{n}|$ for infinitely many $n\in\mathbb{N}$ we obtain that  $\sup \Xi(\mathbb{P}^{-1}M_{1}\oplus...\oplus \mathbb{P}^{-1}M_{j-1})\geq \delta >b_{j-1}$  which is a contradiction to Lemma \ref{max/min of morse spectrum is determined}.  Now if $|s_{n}|\leq |t_{n}|$ for w.l.o.g every $n\in\mathbb{N}$ we have in particular by the bounded growth condition \eqref{bounded growth condition} that $\tilde{K}(\theta_{t_{n}}\omega)e^{a|s_{n}|}>\frac{1}{2}H(\theta_{t_{n}}\omega)e^{\delta s_{n}}$. This leads to the contradiction $a>\infty$. 
\end{proof}

\section{Discussion}
In this paper, we have only considered the Morse spectrum induced by a unique finest weak Morse decomposition. However, given a non-trivial attractor $A$ pertaining to some stronger notion of attraction/repulsion, such that the corresponding weak repeller $R$ also pertains to this notion of attraction/repulsion, Theorem \ref{Theorem unique finest Morse decomposition} implies that one obtains a unique finest Morse decomposition pertaining to this notion of attraction/repulsion. More concretely one may consider the following notions of attraction/repulsion:
\begin{itemize}
    \item[(i)] Pullback (pb) attractor-repeller pairs: We call a weak attractor $A$ a (local) pullback attractor if some open neighborhood $U$ of $A$ is pullback attracted to $A$. It can be shown that the corresponding weak repeller $R$ will also pullback repel an open neighborhood of $R$ (cf.~\cite[Lemma 5.1]{Liu2007}).
    \item[(ii)] Uniform weak (uw) attractor-repeller pairs: We call a weak attractor $A$ a (local) uniform weak attractor if the associated basin of attraction essentially contains a ball around $A$ of deterministic radius. It is clear that the basin of repulsion associated to the corresponding weak repeller $R$ will also essentially contain a ball of deterministic radius around $R$.
    \item[(iii)] Uniform strong (us) attractor-repeller pairs: We call a weak attractor $A$ a (local) uniform strong attractor if $A$ both forward and pullback attracts a ball of deterministic radius around $A$. It can be shown that the corresponding weak repeller $R$ both pullback and forward repels a ball of deterministic radius around $R$ (cf.~ \cite[Theorem~3.3.5]{Call2014}).
\end{itemize}
Using Theorem \ref{Theorem unique finest Morse decomposition} one obtains unique finest Morse decompositions constructed from attractor-repeller pairs pertaining to the above notions of attraction/repulsion. This in turn allows us to define pullback (pb), uniform weak (uw) and uniform strong (us) Morse spectra, which we denote by $\Xi_{j}$, $j\in \{w,pb,uw,us\}$. It follows easily that for $j\in \{uw,us\}$ the angles between the subspaces associated to the Morse sets will be essentially bounded away from each other. Thus, one may consider $\Xi_{uw}$ and $\Xi_{us}$ as uniform Morse spectra. Similarly there exists a uniform notion of the dichotomy spectrum: If one demands that the random variable $K$ in \eqref{def exp dichotomy neg time} and \eqref{def exp dichotomy pos time} is essentially bounded, then one easily obtains that the angles between the range and nullspace of the corresponding invariant projector are essentially bounded away from zero. Let $\Sigma$ denote the uniform dichotomy spectrum.

In Theorem \ref{structure invariant manifolds} we have stated that if one considers an invariant projector $P$ such that $\Phi$ admits an exponential dichotomy with $P$ and tempered random variable $K$, then one has that the null-space of $P$ is a weak attractor with corresponding repeller given by the range of $P$. In fact a stronger statement holds, namely 
\begin{theorem}
    Suppose $P$ is an invariant projector such that \eqref{def exp dichotomy pos time} and \eqref{def exp dichotomy neg time} holds for some $\gamma\in \mathbb{R}$, $\alpha>0$ and some tempered random variable $K$. Then $(A,R):=(\mathbb{P}\mathcal{N}(P),\mathbb{P}\mathcal{R}(P))$ defines a pullback attractor-repeller pair. If one additionally assumes that $K$ is essentially bounded, then $(A,R)$ is a uniform strong attractor-repeller pair.
\end{theorem}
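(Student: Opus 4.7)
The plan is to reduce everything to a single forward expansion estimate on $\mathcal{N}(P)$ complementary to the forward decay \eqref{def exp dichotomy pos time} on $\mathcal{R}(P)$. For $x \in \mathcal{N}(P)(\omega)$ and $t \geq 0$, I would write $x = \Phi(-t, \theta_t\omega)\Phi(t,\omega)x$; by invariance of $P$, $\Phi(t,\omega)x \in \mathcal{N}(P)(\theta_t\omega)$, so \eqref{def exp dichotomy neg time} applied at $\theta_t\omega$ with time argument $-t$ yields $\|\Phi(t,\omega)x\| \geq K(\theta_t\omega)^{-1} e^{(\gamma+\alpha)t}\|x\|$. Combined with \eqref{def exp dichotomy pos time} this gives the key ratio bound: for any $x$ with $(\mathbbm{1}-P)(\omega)x \neq 0$ and any $t \geq 0$,
\[
\frac{\|\Phi(t,\omega)P(\omega)x\|}{\|\Phi(t,\omega)(\mathbbm{1}-P)(\omega)x\|} \leq K(\omega)K(\theta_t\omega)e^{-2\alpha t}\,\frac{\|P(\omega)x\|}{\|(\mathbbm{1}-P)(\omega)x\|}.
\]
By Lemma~\ref{metric projective space lemma}, vanishing of this ratio forces the projective distance from $\mathbb{P}\Phi(t,\omega)x$ to $A(\theta_t\omega) = \mathbb{P}\mathcal{N}(P)(\theta_t\omega)$ to vanish.

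To obtain the pullback claim, I would take the random open set $U(\omega) := \{\mathbb{P}x : \|P(\omega)x\| < \|(\mathbbm{1}-P)(\omega)x\|\}$, which is open for each $\omega$, measurable (by Lemma~\ref{measurability projections} plus continuity of $x \mapsto \|P(\omega)x\|$), contains $A(\omega)$, and is disjoint from $R(\omega) = \mathbb{P}\mathcal{R}(P)(\omega)$. Applying the ratio estimate with $\omega$ replaced by $\theta_{-t}\omega$, every $\mathbb{P}x \in U(\theta_{-t}\omega)$ yields a ratio at time $t$ of at most $K(\omega)K(\theta_{-t}\omega)e^{-2\alpha t}$; temperedness of $K$ forces $K(\theta_{-t}\omega)e^{-2\alpha t} \to 0$, so
\[
\sup_{\mathbb{P}x \in U(\theta_{-t}\omega)} d_{\mathbb{P}}(\mathbb{P}\Phi(t, \theta_{-t}\omega)\mathbb{P}x, A(\omega)) \to 0 \quad \text{as } t \to \infty,
\]
which is the pullback attraction of $A$. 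Since the dichotomy conditions are symmetric under $P \leftrightarrow \mathbbm{1}-P$ and $t \leftrightarrow -t$ (with $\gamma \leftrightarrow -\gamma$), the dual argument applied to $V(\omega) := \{\mathbb{P}x : \|(\mathbbm{1}-P)(\omega)x\| < \|P(\omega)x\|\}$ yields pullback repulsion of $R$. Together with Theorem~\ref{structure invariant manifolds}, this establishes that $(A,R)$ is a pullback attractor-repeller pair.

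For the uniform strong case, essential boundedness of $K$ by some $K_\infty$ gives the deterministic bound $K(\omega)K(\theta_t\omega) \leq K_\infty^2$ for all $t$, and by the remark following Definition~\ref{def exponential dichotomy} forces $A(\omega)$ and $R(\omega)$ to be essentially uniformly bounded away in $\mathbb{P}^{d-1}$ by some deterministic $\delta_0 > 0$. I would then take $U(\omega)$ as the deterministic $(\delta_0/2)$-ball around $A(\omega)$. A geometric computation bounding $\|P(\omega)x\|/\|(\mathbbm{1}-P)(\omega)x\|$ in terms of the projective distance from $\mathbb{P}x$ to $R(\omega)$ and $\|P(\omega)\|$ (which is essentially bounded) shows that on $U(\omega)$ this ratio is deterministically bounded by some constant $C$, so the key ratio estimate becomes $K_\infty^2 C e^{-2\alpha t}$, giving deterministic exponential decay in both forward and pullback time. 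The analogous argument with time reversed and applied to a deterministic-radius ball around $R(\omega)$ yields uniform strong repulsion of $R$, completing the statement.

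The main obstacle I anticipate is making the time-reversal symmetry for the repeller rigorous: the Discussion section spells out only the attractor notions (pullback, uniform strong) explicitly, implying the corresponding repeller notions through time reversal, so some care is needed to translate the attractor argument into a clean statement for $R$. Beyond that, the geometric bound linking the projective distance to $R(\omega)$ with the ratio $\|P(\omega)x\|/\|(\mathbbm{1}-P)(\omega)x\|$ in the uniform case, and the measurability of $U$ and $V$ as random open sets, are the remaining technical items, both expected to be routine.
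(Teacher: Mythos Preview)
The paper does not actually prove this theorem: it is stated in the Discussion section without proof, and the text immediately afterwards refers the reader to \cite{alqaiwani2024spectra} for details. There is therefore no proof in the paper against which to compare your proposal.

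That said, your approach is sound and is essentially the standard one. The key ratio estimate
\[
\frac{\|\Phi(t,\omega)P(\omega)x\|}{\|\Phi(t,\omega)(\mathbbm{1}-P)(\omega)x\|}\le K(\omega)K(\theta_t\omega)e^{-2\alpha t}\,\frac{\|P(\omega)x\|}{\|(\mathbbm{1}-P)(\omega)x\|}
\]
is correct (it follows exactly as you indicate, by combining \eqref{def exp dichotomy pos time} with \eqref{def exp dichotomy neg time} shifted to $\theta_t\omega$), and together with Lemma~\ref{metric projective space lemma} it gives the desired projective convergence. Your choice of $U(\omega)=\{\mathbb{P}x:\|P(\omega)x\|<\|(\mathbbm{1}-P)(\omega)x\|\}$ is appropriate; note it need not be forward invariant, but you correctly lean on Theorem~\ref{structure invariant manifolds} for the underlying weak attractor structure, so for the pullback statement you only need the additional pullback convergence, which your estimate delivers via temperedness of $K$. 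In the uniform case, the ``geometric computation'' you defer is indeed routine: for $\|x\|=1$ one has $\|(\mathbbm{1}-P)(\omega)x\|\ge \dist(x,\mathcal{R}(P)(\omega))$, and the latter is bounded below once $\mathbb{P}x$ stays a fixed projective distance from $R(\omega)$; combined with the essential bound on $\|P(\omega)\|$ this controls the ratio uniformly. The time-reversal for the repeller side is, as you note, purely formal: swapping $P\leftrightarrow \mathbbm{1}-P$ and $t\leftrightarrow -t$ interchanges \eqref{def exp dichotomy pos time} and \eqref{def exp dichotomy neg time}, so the same argument run backwards yields the repeller statements.
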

Given this result, the proof of Theorem \ref{morse spectrum equals dichotomy spectrum} implies that also $\Xi_{pb}=\Sigma^{\prime}$ and hence $\Xi_{w}=\Xi_{pb}$ under the assumption that \eqref{bounded growth condition} holds. Similarly one obtains that $\Xi_{uw}=\Xi_{us}$ if \eqref{bounded growth condition} holds. Additionally one has that $\Xi_{uw}=\Xi_{us}=\Sigma$ whenever $K$ in \eqref{bounded growth condition} is essentially bounded.
For more details see \cite{alqaiwani2024spectra}. However, one generally has that $\Xi_{w}\subsetneq \Xi_{uw}$ as the following example demonstrates. 
\begin{example}
\label{attr and rep coming arbitrarily close}
    Consider $(\Omega, \mathcal{F}, \mu) = ([0,1), \mathcal{B}([0,1)), \lambda|_{[0,1)})$ and 
$\theta: \Omega \to \Omega$,  $\theta(\omega) = \omega + \alpha \mod 1$ with $\alpha \in \mathbb{R}\setminus\mathbb{Q}$
and the random dynamical system generated via 
\begin{align*}
\Phi (1,\omega) = 
\begin{pmatrix}
\beta & 0 \\
0 & 1
\end{pmatrix}, \beta > 1.\\
\end{align*}
$\mathbb{P}\Phi$ has the uniform strong attractor-repeller pair $(\mathbb{P}(1,0), \mathbb{P}(0,1)) = (A,R)$. This implies that one has 
$\Xi_{\Phi,j}=\{0,\ln(\beta)\}$ for $j\in \{w,pb,uw,us\}$.
\\
Now let $$H(\omega) = 
\begin{pmatrix}
1 & e^{\omega} \\
1 & e^{1 - \omega}
\end{pmatrix} \omega \neq \frac{1}{2}, \text{ and }  H\left( \frac{1}{2} \right) = 
\begin{pmatrix}
1 & 0 \\
0 & 1 
\end{pmatrix}.$$
and define $$\psi (1,\omega) = H(\theta\omega) \Phi (1,\omega) H^{-1} (\omega).$$
Then $\mathbb{P} \psi$ has the pullback attractor-repeller pair \\ $(\tilde{A}(\omega),\tilde{R}(\omega))= (\mathbb{P} (1,1), \mathbb{P} (e^{\omega},e^{1-\omega)})$. So
$\tilde{A} (\theta_{n}\omega) = \mathbb{P}(1,1), \tilde{R} (\theta_{n}\omega) = \mathbb{P} (\theta_{n}\omega,1-\theta_{n}\omega) $.
\\
However, the only uniform weak attractor-repeller pair for $\mathbb{P}\psi$ is $\{\mathbb{P}^{1},\emptyset \}$
as for any $\omega \in [0,1) \setminus \{\frac{1}{2}\} $ there is a sequence $(n_{k})_{k\in\mathbb{N}}\subset \mathbb{N}$ such that $\lim_{k\to\infty}\theta_{n_{k}}\omega = \frac{1}{2}$. As $H$ is a tempered random coordinate change one has $\Xi_{\psi,j}=\{0,\ln(\beta)\}$ (cf. Proposition \ref{Prop morse spectrum coordinate change}). However, one sees that $\Xi_{\psi,uw}=\Xi_{\psi,us}=[0,\ln(\beta)]$.
\end{example}

\bibliographystyle{plain}
\bibliography{literatur}
\end{document}